\newtheorem{thm}{Theorem}[section]
\newtheorem*{thm*}{Theorem}
\newtheorem{cor}[thm]{Corollary}
\newtheorem{prop}[thm]{Proposition}
\newtheorem*{prop*}{Proposition}
\newtheorem{lem}[thm]{Lemma}
\newtheorem*{ntn*}{Notation}
\theoremstyle{definition}
\newtheorem{defn}[thm]{Definition}
\newtheorem{exmp}[thm]{Example}
\theoremstyle{remark}
\newtheorem{rem}[thm]{Remark}
\newcommand{\pardeg}{\operatorname{pardeg}}
\let\c@equation\c@thm
\numberwithin{thm}{section}
\numberwithin{equation}{section}
\title{\textsc{Topological invariants of parabolic $G$-Higgs bundles}}
\author{Georgios Kydonakis, Hao Sun and Lutian Zhao}
\date{13 March 2020\\ 
2020 Mathematics Subject Classification: 14D20, 58E15, 14H60\\
Keywords: parabolic Higgs bundle, orbifold Higgs bundle, topological invariants, Teichm\"{u}ller component, maximal component}
\begin{document}
\maketitle
\begin{abstract}
For a semisimple real Lie group $G$, we study topological properties of moduli spaces of polystable parabolic $G$-Higgs bundles over a Riemann surface with a divisor of finitely many distinct points. For a split real form of a complex simple Lie group, we compute the dimension of apparent parabolic Teichm{\"u}ller components. In the case of isometry groups of classical Hermitian symmetric spaces of tube type, we provide new topological invariants for maximal parabolic $G$-Higgs bundles arising from a correspondence to orbifold Higgs bundles. Using orbifold cohomology we count the least number of connected components of moduli spaces of such objects. We further exhibit an alternative explanation of fundamental results on counting components in the absence of a parabolic structure.
\end{abstract}

\section{Introduction}

Parabolic vector bundles over Riemann surfaces with marked points were introduced by C. Seshadri in \cite{Sesh} and similar to the Narasimhan-Seshadri correspondence, there is an analogous correspondence between stable parabolic bundles and the unitary representations of the fundamental group of the punctured surface with fixed holonomy class around each puncture \cite{Meht}. Later on, C. Simpson in \cite{Simp} provided a non-abelian Hodge correspondence in the non-compact case. The analysis on the non-compact algebraic curve has to presume the appropriate growth of the harmonic metric at the punctures, a notion called by C. Simpson tameness. In a particular case, parabolic Higgs bundles are in bijection with meromorphic flat connections, whose holonomy around each puncture defines a conjugacy class of an element in the unitary group described by the weights in the parabolic structure of the bundle. These connections correspond to representations of the fundamental group of the punctured surface in the general linear group, which send a small loop around each parabolic point to an element conjugate to a unitary element. Chern classes for parabolic bundles were constructed by I. Biswas in \cite{Biswas-Chern}; one can also define Chern characters of parabolic bundles in the rational Chow groups (see \cite{Iyer-Simpson}).

In this article, we study connected components of moduli spaces of polystable parabolic $G$-Higgs bundles for a semisimple real Lie group $G$. These objects were explicitly defined in \cite{BiGaRi}, where a Hitchin-Kobayashi correspondence was also established. Moreover, P. Boalch in \cite{Boalch} provided a local Riemann-Hibert correspondence for logarithmic connections on $G$-bundles and parabolic $G$-bundles on a curve. In the case when $G$ is a split real form of a complex simple Lie group, there exists a topologically trivial connected component in the moduli space, extending N. Hitchin's classical result from the non-parabolic case \cite{Hit92}. To be more precise, we show:

\begin{thm*}{\textnormal{\textbf{\ref{401}}}}
Let $X$ be a compact Riemann surface of genus $g$ and let $D=\left\{ {{x}_{1}},\ldots {{x}_{s}} \right\}$ a divisor of $s$-many distinct points on $X$, such that $2g-2+s>0$, that is, the surface $X$ can be equipped with a metric of constant negative curvature (-4). Let $G$ be the adjoint group of the split real form of a complex simple Lie group with Cartan decomposition in the Lie algebra $\mathfrak{g}=\mathfrak{h}\oplus \mathfrak{m}$. The space of homomorphisms from the fundamental group of $X$ into $G$ with fixed conjugacy class of monodromy around the points in $D$ has a component of real dimension $2\left( g-1 \right){{\dim}_{\mathbb{R}}}G+2s\cdot \mathrm{rk}E\left( {{\mathfrak{m}}^{\mathbb{C}}} \right)$.
\end{thm*}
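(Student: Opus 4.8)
The plan is to route the statement through the non-abelian Hodge correspondence and then to produce the asserted component explicitly as the image of a parabolic analogue of Hitchin's section, whose dimension can be read off by Riemann--Roch. First I would invoke the tame non-abelian Hodge and Hitchin--Kobayashi correspondence for parabolic $G$-Higgs bundles (in the form established by Simpson and in \cite{BiGaRi}) to identify the space of homomorphisms $\pi_1(X\setminus D)\to G$ with a prescribed conjugacy class of monodromy around each $x_i$ with the moduli space $\mathcal{M}$ of polystable parabolic $G$-Higgs bundles whose parabolic weights are the ones determined by those fixed conjugacy classes. Under this homeomorphism it suffices to exhibit and to measure one distinguished connected component of $\mathcal{M}$.

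Next I would construct the parabolic Hitchin section. Fix a principal $\mathfrak{sl}_2$-triple $(e_-,H,e_+)$ in $\mathfrak g$ and decompose $\mathfrak g^{\mathbb C}=\bigoplus_{i=1}^{\ell}V_i$ into irreducible $\mathfrak{sl}_2$-submodules, where $\ell=\mathrm{rk}\,\mathfrak g$ and $V_i$ has highest weight $2m_i$ for the exponents $m_i$; set $d_i=m_i+1$. Adapting Hitchin's construction to the parabolic setting, I would write the Higgs field as $\phi=e_-+\sum_{i=1}^{\ell}q_i\,e_i$, with $e_i$ a highest-weight vector of $V_i$ and $q_i$ a meromorphic section of $K^{d_i}$ whose poles are supported on $D$ with orders dictated by the chosen parabolic weights, i.e. $q_i\in H^0\!\left(X,K^{d_i}(n_iD)\right)$. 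One then checks that these data define a polystable parabolic $G$-Higgs bundle, that the assignment is a genuine section of the parabolic Hitchin map, and that its image is a single connected component diffeomorphic to the vector space $\bigoplus_i H^0\!\left(X,K^{d_i}(n_iD)\right)$; this is the content of the \emph{apparent parabolic Teichm\"uller component}.

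The dimension count then proceeds by Riemann--Roch. For the relevant pole orders one has $\deg\!\left(K^{d_i}(n_iD)\right)=d_i(2g-2)+n_i s>2g-2$ under the hypothesis $2g-2+s>0$ and $d_i\ge 2$, so $H^1$ vanishes and $\dim_{\mathbb C}H^0\!\left(X,K^{d_i}(n_iD)\right)=(2d_i-1)(g-1)+n_i s$. Summing over $i$ and using the standard identity $\sum_{i=1}^{\ell}(2d_i-1)=\dim_{\mathbb R}G$ gives complex dimension $(g-1)\dim_{\mathbb R}G+s\sum_i n_i$. The remaining input is the representation-theoretic identity $\sum_{i=1}^{\ell}n_i=\mathrm{rk}\,E(\mathfrak m^{\mathbb C})$; for a split real form the governing fact is $\sum_{i=1}^{\ell}d_i=\tfrac{1}{2}\!\left(\dim_{\mathbb R}G+\ell\right)=\dim_{\mathbb C}\mathfrak m^{\mathbb C}$, which identifies the per-puncture sum with $\mathrm{rk}\,E(\mathfrak m^{\mathbb C})$. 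Doubling for the underlying real manifold yields the claimed $2(g-1)\dim_{\mathbb R}G+2s\cdot\mathrm{rk}\,E(\mathfrak m^{\mathbb C})$, and setting $s=0$ recovers Hitchin's value $2(g-1)\dim_{\mathbb R}G$.

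The main obstacle is the parabolic bookkeeping. The delicate points are pinning down the exact pole orders $n_i$ forced by the fixed conjugacy classes of monodromy (equivalently by the parabolic weights and the residue conditions on $\phi$ at each $x_i$), and proving that they assemble into precisely $\mathrm{rk}\,E(\mathfrak m^{\mathbb C})$ rather than an off-by-a-puncture quantity, which is where the split-form structure of the principal $\mathfrak{sl}_2$-grading must be used. Beyond this, one must verify that the section sweeps out exactly one topologically trivial component carrying the correct monodromy type, rather than a proper subvariety of such a component, so that the dimension computed for the Hitchin base is genuinely the dimension of a connected component of the representation space.
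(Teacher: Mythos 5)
Your outline follows the paper's own route: pass to parabolic $G$-Higgs bundles (the paper invokes Boalch's parabolic Riemann--Hilbert correspondence rather than only Simpson and \cite{BiGaRi}), build the parabolic Hitchin section, and measure its base by Riemann--Roch. However, both of the points you defer as ``delicate'' are where the actual content lies, and your proposed resolution of the first one is incorrect. The pole orders forced by the fixed monodromy data (trivial flags with weight $\tfrac{1}{2}$ and residue $\mathrm{Res}_{x_i}\varphi=e_{-1}$) are $n_i=m_i=d_i-1$, i.e. $a_i\in H^0\left(X,K^{m_i+1}\otimes\xi^{m_i}\right)$, exactly as in \cite{Biswas3} (for $\mathrm{SL}(2,\mathbb{R})$ the quadratic differential lies in $H^0(X,K^2(D))$, not in $H^0(X,K^2(2D))$). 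Hence the per-puncture contribution is $\sum_i m_i$, the number of positive roots. Your ``governing fact'' $\sum_i d_i=\dim_{\mathbb{C}}\mathfrak{m}^{\mathbb{C}}$ is true for split forms, but using it to identify the per-puncture sum amounts to taking $n_i=d_i$, which yields $2(g-1)\dim_{\mathbb{R}}G+2s\dim_{\mathbb{C}}\mathfrak{m}^{\mathbb{C}}$ --- too large by $2s\cdot\mathrm{rk}\,\mathfrak{g}$. A sanity check with $G=\mathrm{PSL}(2,\mathbb{R})$: the Teichm\"uller space of a genus $g$ surface with $s$ punctures has dimension $6g-6+2s$, which is what $n_1=m_1=1$ gives, whereas your recipe gives $6g-6+4s$. (The notation $\mathrm{rk}\,E(\mathfrak{m}^{\mathbb{C}})$ in the statement is the paper's shorthand for this quantity $\sum_i m_i$; it is not the fiber dimension $\dim_{\mathbb{C}}\mathfrak{m}^{\mathbb{C}}$.)

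The second gap is that you never show the number you compute is the dimension of a component of the representation space rather than of a possibly proper subfamily; you explicitly flag this and leave it open. The paper settles it with a parameter count on the representation side: the monodromy around each puncture is confined to the conjugacy class of a regular unipotent element, whose centralizer has dimension $l=\mathrm{rk}\,\mathfrak{g}^{\mathbb{C}}$, so each puncture contributes $\dim_{\mathbb{R}}G-l=2\sum_i m_i$ real parameters, while the handle generators modulo the relation and overall conjugation contribute $2(g-1)\dim_{\mathbb{R}}G$. This total matches the Riemann--Roch count for the Hitchin base, which is what certifies that the family swept out by the section is full-dimensional in the space of representations with the prescribed monodromy class. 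Without this count (or some substitute openness argument), your proof measures only the Hitchin base, not a component of the space of homomorphisms.
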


An important tool towards studying the topology of moduli spaces of parabolic Higgs bundles over a Riemann surface $X$ with a divisor $D$ is provided by the correspondence of these objects to \textit{orbifold Higgs bundles} over a finite Galois covering $Y$ of $X$, ramified along $D$. I. Biswas in \cite{Biswas2} provided a correspondence between a parabolic vector bundle and an orbifold vector bundle, that is, a vector bundle on a variety equipped with a finite group action together with a lift of the action of the group to the bundle. In his work, I. Biswas explicitly constructs a class of parabolic bundles using the ``Covering Lemma'' of Y. Kawamata \cite{Kawa}; the correspondence depends on the choice of the parabolic weights, whereas the Galois covering $Y$ is constructed to have the same dimension as $X$. A similar correspondence without such restrictions was provided by I. Mundet i Riera in \cite{Rier}. In the Higgs bundle case, for a finite group $\Gamma$ acting as a group of automorphisms on a smooth projective variety $Y$, such that the quotient $X = Y/\Gamma$ is also a smooth variety, I. Biswas, S. Majumder and M. Wong in \cite{BMW} provide a bijective correspondence between parabolic Higgs bundles on $X$ and $\Gamma$-Higgs bundles on $Y$.

When the parabolic weights are rational, an equivalence between parabolic bundles and holomorphic bundles over $V$-surfaces (that means 2-dimensional orbifolds) provides an effective method to study the moduli problem, developing a Yang-Mills-Higgs theory on Riemann $V$-surfaces and calculating the cohomology of the gauge group of a $V$-bundle. The correspondence between $V$-bundles and parabolic bundles was first studied by H. Boden in \cite{Boden} and by M. Furuta and B. Steer in \cite{FuSt} using similar methods. Subsequently, B. Nasatyr and B. Steer in \cite{NaSt} introduced Higgs $V$-bundles as a straightforward extension of the original approach of N. Hitchin to orbifold Riemann surfaces studying solutions of the $\mathrm{U}(2)$ - Yang-Mills-Higgs equations on orbifold Riemann surfaces and their reinterpretation as $\mathrm{SL}(2,\mathbb{C})$-representations of the orbifold fundamental group. Moreover, the Teichm\"uller space in the orbifold case was studied in \cite{NaSt}, as well as the topology of the moduli space of Higgs bundles in the orbifold situation for rank 2 bundles.

Under the correspondence between parabolic Higgs bundles and Higgs $V$-bundles, we map a parabolic Higgs bundle over a Riemann surface $X$ with trivial filtration over each puncture ${{p}_{k}}$, for $1\le k\le s$, and weight either 0 or $\frac{1}{2}$ to a Higgs $V$-bundle over a $V$-manifold $M$ with $s$ many marked points around which the isotropy group is ${{\mathbb{Z}}_{2}}$, whereas $X$ is the underlying surface of $M$. $V$-cohomology with coefficients in ${{\mathbb{Z}}_{2}}$ is now used to describe new topological invariants and thus compute the least number of connected components of moduli of maximal parabolic $G$-Higgs bundles for semisimple Lie groups $G$, when the homogeneous space ${G}/{H}$ is a Hermitian symmetric space of tube type, where $H\subset G$ is a maximal compact subgroup. Note here that maximality is provided by a general Milnor-Wood type inequality established in \cite{BiGaRi}. Calculations in orbifold cohomology provide the rank of the cohomology groups where the topological invariants of the corresponding Higgs $V$-bundles live as Stiefel-Whitney classes; we deduce the following theorems by counting all possible numbers of these invariants for parabolic $\mathrm{Sp}(2n,\mathbb{R})$-Higgs bundles with maximal degree.

\begin{thm*}{\textbf{\ref{709}}}, {\textbf{\ref{710}}}.
Let $X$ be a smooth Riemann surface of genus $g$ and let $D$ be a reduced effective divisor of $s$ many points on $X$, such that $2g-2+s>0$.
\begin{enumerate}
\item The moduli space ${{\mathsf{\mathcal{M}}}_{par}^{max}}( \mathrm{Sp}\left( 2,\mathbb{R} \right))$ of maximal polystable parabolic $\mathrm{Sp}\left( 2,\mathbb{R} \right)$-Higgs bundles over the pair $(X, D)$ has at least $2^{2g+s-1}$ connected components. 
\item The moduli space $\mathcal{M}_{par}^{max}(\mathrm{Sp}(4,\mathbb{R}))$ of maximal polystable parabolic $\mathrm{Sp}\left( 4,\mathbb{R} \right)$-Higgs bundles over the pair $(X, D)$ has at least $(2^s+1)2^{2g+s-1}+2^s(2g-3+s)$ connected components.
\item The moduli space  ${{\mathsf{\mathcal{M}}}_{par}^{max}}\left( \mathrm{Sp}\left( 2n,\mathbb{R} \right) \right)$ of maximal polystable parabolic $\mathrm{Sp}\left( 2n,\mathbb{R} \right)$-Higgs bundles over the pair $(X, D)$, for $n \ge 3$, has at least $(2^s+1)2^{2g+s-1}$ connected components.
\end{enumerate}
\end{thm*}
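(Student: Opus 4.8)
The plan is to transport the entire problem to the orbifold side and read off the components from topological invariants of the associated orbifold Higgs bundles. Using the correspondence recalled above (as in \cite{BMW}, \cite{NaSt}), a polystable parabolic $\mathrm{Sp}(2n,\mathbb{R})$-Higgs bundle over $(X,D)$ with all weights in $\{0,\tfrac12\}$ corresponds to a polystable $\mathrm{Sp}(2n,\mathbb{R})$-Higgs $V$-bundle over the $V$-surface $M$ whose underlying space is $X$ and whose isotropy group at each of the $s$ points of $D$ is $\mathbb{Z}_2$; maximality is preserved, and the Milnor--Wood bound of \cite{BiGaRi} guarantees that the maximal locus is the one to analyze. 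Since $\mathrm{Sp}(2n,\mathbb{R})/\mathrm{U}(n)$ is Hermitian of tube type, I would first establish the \emph{orbifold Cayley correspondence}: a maximal Higgs $V$-bundle $(V,\beta,\gamma)$ is equivalent to a $K_M^{2}$-twisted orthogonal Higgs $V$-bundle $(W,Q,\psi)$, where $W=V\otimes K_M^{-1/2}$ carries a nondegenerate $Q$-form (hence an $\mathrm{O}(n)$-structure) and $\psi$ is $Q$-symmetric. The topological type of $(W,Q)$ is recorded by its Stiefel--Whitney classes $w_1\in H^1_{\mathrm{orb}}(M;\mathbb{Z}_2)$ and $w_2\in H^2_{\mathrm{orb}}(M;\mathbb{Z}_2)$, together with the isotropy representations at the cone points; because different topological types lie in different connected components, counting the \emph{realized} invariants produces the asserted lower bounds.

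The second step is the computation of the two orbifold cohomology groups in which the invariants live. Writing $M$ as the union of the genus-$g$ surface $\Sigma_{g,s}$ with $s$ open disks removed and $s$ orbifold disks $D^2/\mathbb{Z}_2$, glued along the boundary circles, a Mayer--Vietoris argument with $\mathbb{Z}_2$-coefficients (using that each orbifold disk has the $\mathbb{Z}_2$-cohomology of $B\mathbb{Z}_2$) gives $H^1_{\mathrm{orb}}(M;\mathbb{Z}_2)\cong\mathbb{Z}_2^{\,2g+s-1}$ and $H^2_{\mathrm{orb}}(M;\mathbb{Z}_2)\cong\mathbb{Z}_2^{\,s}$; equivalently these are read off from $\pi_1^{\mathrm{orb}}(M)=\langle a_i,b_i,c_j\mid c_j^2=1,\ \prod_i[a_i,b_i]\prod_j c_j=1\rangle$, where the $c_j$-data encode the isotropy actions at the marked points subject to a single parity relation. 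Thus $|H^1_{\mathrm{orb}}(M;\mathbb{Z}_2)|=2^{2g+s-1}$ and $|H^2_{\mathrm{orb}}(M;\mathbb{Z}_2)|=2^{s}$, and in particular the cone-point local invariants are already absorbed into $w_1$ and $w_2$.

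With these groups in hand I would treat the three ranks separately. For $n=1$ the Cayley partner is a rank-one real orbifold bundle, classified completely by $w_1\in H^1_{\mathrm{orb}}(M;\mathbb{Z}_2)$, giving exactly $2^{2g+s-1}$ invariants and hence statement (1). For $n\ge 3$ the orthogonal bundle $W$ is classified by the pair $(w_1,w_2)$, and following the closed-surface analysis of Garc\'{\i}a-Prada--Gothen--Mundet i Riera an additional refinement occurs, uniformly in $w_1$, contributing one further component per value of $w_1$; this yields $|H^1_{\mathrm{orb}}|\bigl(|H^2_{\mathrm{orb}}|+1\bigr)=2^{2g+s-1}(2^{s}+1)$ invariants, which is statement (3) and also reproduces the classical $3\cdot 2^{2g}$ when $s=0$. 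For $n=2$ the group $\mathrm{SO}(2)\cong\mathrm{U}(1)$ is abelian, so on the orientable locus $w_1=0$ the class $w_2$ refines to an \emph{integer} orbifold-degree (Euler) invariant $e$; the maximality constraint of \cite{BiGaRi} confines $e$ to a finite range of $2g-3+s$ values, and combining with the $2^{s}$ choices of the remaining local data contributes the extra term $2^{s}(2g-3+s)$, giving statement (2).

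The principal difficulty, and the step I expect to require the most care, is \emph{realization}: showing that every invariant counted above is actually attained by a nonempty polystable moduli space, so that the enumeration is a genuine lower bound on $\pi_0$. I would handle this by constructing, for each prescribed invariant, an explicit polystable orbifold Higgs bundle---e.g.\ through an orbifold Hitchin-type section and deformations of it---and verifying stability; the subtle points are pinning down the exact range of the integer invariant in the $\mathrm{Sp}(4,\mathbb{R})$ case and justifying the uniform ``$+1$'' refinement for $n\ge 3$, both of which must be transported faithfully from the smooth to the orbifold setting while keeping track of the half-integral weights throughout.
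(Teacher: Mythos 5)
Your proposal follows essentially the same route as the paper's proof: the correspondence between parabolic Higgs bundles with weights in $\{0,\tfrac12\}$ and Higgs $V$-bundles over the $V$-surface $M$ with $\mathbb{Z}_2$-isotropy, the Cayley-type reduction $W=V\otimes L_0^{-1}$ to an orthogonal $V$-bundle whose Stiefel--Whitney classes live in $H^1_V(M,\mathbb{Z}_2)\cong\mathbb{Z}_2^{2g+s-1}$ and $H^2_V(M,\mathbb{Z}_2)\cong\mathbb{Z}_2^{s}$ (computed by the same Mayer--Vietoris argument), and the same trichotomy in the rank with the $\mathrm{SO}(2,\mathbb{C})$-splitting $W=L\oplus L^{\vee}$ and degree refinement for $n=2$ and the Teichm\"uller-type extra components in general. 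The one cosmetic divergence is the $\mathrm{Sp}(4,\mathbb{R})$ bookkeeping: the paper obtains the extra $2^{2g+s-1}$ components from the extremal stratum $\pardeg(L)=2g-2+s$ of the $w_1=0$ locus (where $L^2\cong K(D)^2$, so square roots of $K(D)^2$ index them) together with $2^s(2g-2+s)$ classes from the non-extremal degrees, rather than your uniform ``one further component per value of $w_1$'' plus a range of $2g-3+s$ degree values; the totals agree, but the extra components for $n=2$ all sit over $w_1=0$, so the paper's decomposition is the one to follow when making the argument precise.
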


We subsequently study the topological invariants for the moduli space when the parabolic structure $\alpha$ is fixed, but not necessarily involving all weights equal to $\frac{1}{2}$. We introduce the notation $\mathcal{M}_{par}^{max,\alpha}(\mathrm{Sp}(2n,\mathbb{R}))$ to mean polystable parabolic $\mathrm{Sp}(2n,\mathbb{R})$-Higgs bundles, where $\alpha$ is a given parabolic structure, which is fixed and it is the same for all parabolic $\mathrm{Sp}(2n,\mathbb{R})$-Higgs bundles in $\mathcal{M}_{par}^{max,\alpha}(\mathrm{Sp}(2n,\mathbb{R}))$, this means, they have the same filtration over each $x \in D$ with the same weight $\alpha(x)$, for every $x \in D$. Note that the moduli space $\mathcal{M}_{par}^{max,\alpha}(\mathrm{Sp}(2n,\mathbb{R}))$ is a subspace of $\mathcal{M}_{par}^{max}(\mathrm{Sp}(2n,\mathbb{R}))$ considered earlier.

In this case, the monodromy around the points in the divisor needs special attention. To be more precise, the $V$-fundamental group is described by
\begin{align*}
\pi_V^1(M)=\{a_1,b_1,...,a_g,b_g,\sigma_1,...,\sigma_s \quad | \quad \sigma_1...\sigma_s[a_1,b_1]...[a_g,b_g]=1,\sigma_i^2=1, \mathrm{ for } 1 \leq i \leq s\},
\end{align*}
where $\sigma_i$ describe the monodromy around the point $x_i$. By the correspondence between line $V$-bundles and parabolic line bundles, the monodromy around $x_i$ corresponds to the weight of the corresponding parabolic line bundle over the point $x_i$. Thus fixing a parabolic structure $\alpha$ is equivalent to fixing the monodromy around $x_i$, for $1 \leq i \leq s$. However, not every parabolic structure corresponds to a well-defined element in $\mathrm{Hom}(\pi_V^1(M),\mathbb{Z}_2)$. Indeed, the relation $\sigma_1...\sigma_s[a_1,b_1]...[a_g,b_g]=1$ implies that the number of nontrivial $\sigma_i$ is even. Equivalently, if the cardinality of the set $\{x \in D \mathrm{ } | \mathrm{ } \alpha(x)=\frac{1}{2}\}$ is even, then the parabolic structure corresponds to an element in $\mathrm{Hom}(\pi_V^1(M),\mathbb{Z}_2)$, and such a parabolic structure could be a choice for the square root of $K(D)^2$. Thus we say that the parabolic structure $\alpha$ is \emph{even} (resp. \emph{odd}) if the cardinality of the set $\{x \in D \mathrm{ } | \mathrm{ } \alpha(x)=\frac{1}{2}\}$ is even (resp. odd). Recall that the divisor $D$ contains an integer number of $s$-many points. Our result which includes this extra analysis is the following. 
\begin{prop*}{\textnormal{\textbf{\ref{711}}}}
Let $X$ be a smooth Riemann surface of genus $g$ and let $D$ be a reduced effective divisor of $s$ many points on $X$, such that $2g-2+s>0$. Consider the moduli space $\mathcal{M}_{par}^{max,\alpha}(\mathrm{Sp}(2n,\mathbb{R}))$ of maximal polystable parabolic $\mathrm{Sp}(2n,\mathbb{R})$-Higgs bundles, where $\alpha$ is a given parabolic structure, which is fixed for all Higgs bundles in the moduli space, this means, the parabolic Higgs bundles have the same filtration over each $x \in D$ with the same weight $\alpha(x)$, for every $x \in D$. Then,
\begin{enumerate}
\item[$\mathrm{i}.$] If $\alpha$ is even, the moduli space $\mathcal{M}_{par}^{max,\alpha}(\mathrm{Sp}(4,\mathbb{R}))$ has at least $2^{2g+s-1}+(2g-3+s)+2^{2g}$ connected components.
\item[$\mathrm{ii}$] If $\alpha$ is odd, the moduli space $\mathcal{M}_{par}^{max,\alpha}(\mathrm{Sp}(4,\mathbb{R}))$ has at least $2^{2g+s-1}+(2g-3+s)$ connected components.
\item[$\mathrm{iii}.$] If $\alpha$ is even, the moduli space $\mathcal{M}_{par}^{max,\alpha}(\mathrm{Sp}(2,\mathbb{R}))$ has at least $2^{2g}$ connected components, and the moduli space $\mathcal{M}_{par}^{max,\alpha}(\mathrm{Sp}(2n,\mathbb{R}))$ has at least $2^{2g+s-1}+2^{2g}$ connected components.
\item[$\mathrm{iv}.$] If $\alpha$ is odd, there are no maximal polystable parabolic $\mathrm{Sp}(2,\mathbb{R})$-Higgs bundles with fixed parabolic structure $\alpha$, and the moduli space $\mathcal{M}_{par}^{max,\alpha}(\mathrm{Sp}(2n,\mathbb{R}))$ has at least $2^{2g+s-1}$ many connected components.
\end{enumerate}
\end{prop*}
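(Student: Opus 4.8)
The plan is to refine the enumeration behind Theorems \ref{709} and \ref{710} by carrying out the very same count of orbifold topological invariants, but now with the parabolic structure $\alpha$ held fixed. Recall that under the correspondence with Higgs $V$-bundles over $M$, a maximal polystable parabolic $\mathrm{Sp}(2n,\mathbb{R})$-Higgs bundle is classified, through the Cayley correspondence, by the orbifold Stiefel--Whitney classes of its rank-$n$ orthogonal Cayley partner, namely $w_1 \in H^1_V(M,\mathbb{Z}_2)$ and $w_2 \in H^2_V(M,\mathbb{Z}_2)$, together with an orbifold Euler number in the special locus where the partner reduces to $\mathrm{SO}(2)$ (this extra integer invariant occurs precisely when $n=2$, i.e. for $\mathrm{Sp}(4,\mathbb{R})$). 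As explained above, fixing $\alpha$ is the same as prescribing the monodromy elements $\sigma_1,\dots,\sigma_s \in \mathbb{Z}_2$ around the punctures, i.e. prescribing the restriction of the relevant first class to small loops about each $x_i$, with $\alpha(x_i)=\tfrac12$ corresponding to $\sigma_i=1$ and $\alpha(x_i)=0$ to $\sigma_i=0$. I would first sort the components counted in \ref{709}, \ref{710} into families according to how they interact with this prescribed puncture datum.

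The key dichotomy I would isolate next is a realizability statement in $H^1_V(M,\mathbb{Z}_2)$. Since in $\pi_V^1(M)$ the commutators die modulo $2$, the defining relation forces $\sigma_1+\cdots+\sigma_s=0$; hence the restriction homomorphism $H^1_V(M,\mathbb{Z}_2)\to\mathbb{Z}_2^{\,s}$ has image exactly the even configurations $\mathbb{Z}_2^{\,s-1}$ and kernel $\mathbb{Z}_2^{\,2g}$. Consequently a prescribed puncture datum is attained by a class in $H^1_V(M,\mathbb{Z}_2)$ if and only if $\alpha$ is \emph{even}, in which case it is attained by exactly $2^{2g}$ classes. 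This single fact produces both the recurring summand $2^{2g}$ in the even cases and the vanishing in the odd ones: it governs precisely the ``square-root'' family of components, those Fuchsian/Hitchin-type solutions carrying a genuine $2$-torsion line $V$-bundle, a square root of $K(D)$, whose monodromy must equal $\alpha$.

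With this in hand I would assemble the three groups. For $\mathrm{Sp}(2,\mathbb{R})$ the rank-one Cayley partner \emph{is} the square-root datum, so the whole moduli space consists of that single family: $2^{2g}$ components when $\alpha$ is even and none when $\alpha$ is odd, giving items $\mathrm{iii}$ and $\mathrm{iv}$ in that case. For $\mathrm{Sp}(2n,\mathbb{R})$ with $n\ge3$ the rank-$n$ orthogonal partner carries an honest first class $w_1\in H^1_V(M,\mathbb{Z}_2)$ that is \emph{not} tied to $\alpha$, the puncture data being absorbed by the isotropy of the full bundle, so this generic family contributes $|H^1_V(M,\mathbb{Z}_2)|=2^{2g+s-1}$ components for either parity, to which the square-root family adds $2^{2g}$ only in the even case; this yields $2^{2g+s-1}+2^{2g}$ (even) and $2^{2g+s-1}$ (odd). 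For $\mathrm{Sp}(4,\mathbb{R})$ the same generic and square-root families appear, and in addition the $\mathrm{SO}(2)$-locus contributes the orbifold Euler-number family, whose admissible values range over $2g-3+s$ integers independently of the parity of $\alpha$; adding $2g-3+s$ to the previous counts gives items $\mathrm{i}$ and $\mathrm{ii}$.

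The main obstacle I anticipate is not the cohomological bookkeeping but showing that fixing $\alpha$ neither merges nor further splits these families beyond the square-root realizability just described: concretely, one must verify that the generic and Euler-number families are genuinely insensitive to the parity of $\alpha$ while only the square-root family is obstructed, and that the three families remain pairwise disjoint after specialization. A clean consistency check, which I would use to validate the final formulas, is that summing the per-$\alpha$ counts over the $2^{s-1}$ even and $2^{s-1}$ odd parabolic structures reproduces exactly the totals $(2^s+1)2^{2g+s-1}+2^s(2g-3+s)$ and $(2^s+1)2^{2g+s-1}$ of Theorem \ref{710}.
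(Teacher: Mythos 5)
Your proposal is correct and follows essentially the same route as the paper: the same $V$-bundle/Stiefel--Whitney classification, the same identification of a fixed $\alpha$ with prescribed $\mathbb{Z}_2$-monodromy at the punctures, the same key realizability dichotomy (the relation in $\pi_V^1(M)$ kills odd configurations and leaves exactly $2^{2g}$ classes over an even one), and the same three-family case analysis, differing only in immaterial bookkeeping (you fold the $w_1=0$, base-degree stratum into the "generic" count of $2^{2g+s-1}$, where the paper counts $2^{2g+s-1}-1$ nonzero classes plus $2g-2+s$ degree values). Your closing consistency check against Theorems \ref{709} and \ref{710} is a nice addition not present in the paper, and it does verify correctly since $2^{s-1}\cdot 2^{2g}=2^{2g+s-1}$.
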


Using the component count method above for the Lie group $G = \mathrm{Sp}\left( 4,\mathbb{R} \right)$ via the correspondence to orbifold Higgs bundles, we provide a minimum component count for moduli spaces of maximal polystable parabolic $G$-Higgs bundles analogously to the non-parabolic case \cite{BrGPGHermitian}. Our results are summarized in Tables 1, 2, 3 appearing at the end of the main body of this article.

The classical tool in order to provide an exact count of the number of connected components of the moduli spaces considered in Table 1 involves the analysis of a particular moment map on the moduli space, which is also a Morse-Bott function. For parabolic $G$-Higgs bundles for a real Lie group $G$, this was pioneered in the dissertation of M. Logares \cite{Loga} in the case $G = \mathrm{U} (p, q)$ following analogous methods from non-parabolic cases. This problem for groups other than $\mathrm{U} (p, q)$ is addressed in our subsequent article \cite{KSZ2}, where we develop the relevant Morse theoretic machinery to show that for $\mathrm{Sp}(2n,\mathbb{R})$ the above numbers of components are in fact the exact ones (and the same follows for the rest of the groups appearing in Table 1). Therefore, we deduce that the topological invariants introduced in this article are fine enough to distinguish the connected components of polystable parabolic $G$-Higgs moduli spaces in the cases above.

Yet, an alternative method for counting components of moduli spaces of (non-parabolic) $G$-Higgs bundles, especially for a split real form $G$, is by studying orbits of the monodromy group on the first $\mathrm{mod}2$ cohomology of the fibers of the Hitchin fibration; this was first described by L. Schaposnik in \cite{Schap} for the case $\mathrm{SL}(2,\mathbb{R})$, and subsequently the method was applied also for non split real cases as well (see \cite{BarSch1}, \cite{BarSch2}, \cite{HitSch}). These techniques in the parabolic $G$-Higgs setting are also developed in an ensuing article \cite{KSZ3}.

It is interesting at this point to compare the results of Table 1 with the analogous results in the non-parabolic case from \cite{Stru}, \cite{BrGPGHermitian}, \cite{GaGMsymplectic}, \cite{Gothen} and \cite{Hit92}, thus providing further applications of our study of the topological invariants for parabolic $G$-Higgs bundles, as well as the methods developed for finding those. In \cite{Stru}, T. Strubel using Fenchel-Nielsen coordinates showed that the moduli space ${{\mathsf{\mathcal{R}}}^{\max }}\left( {{\Sigma }_{g,m}},\mathrm{Sp}\left( 2n,\mathbb{R} \right) \right)$ of maximal representations of the fundamental group of a topological surface ${{\Sigma }_{g,m}}$ of genus $g$ and $m\ge 1$ boundary components into $\mathrm{Sp}\left( 2n,\mathbb{R} \right)$, has exactly ${{2}^{2g+m-1}}$ connected components for every $n\ge 1$. We explain how one can use the method involving the $V$-manifold correspondence to obtain an alternative description of T. Strubel's result. Furthermore, we exhibit maximal non-parabolic $G$-Higgs bundles as $V$-bundles equipped with a trivial action, an interpretation which leads to an explanation of the component counts established by S. Bradlow, O. Garc\'{i}a-Prada, P. Gothen and I. Mundet i Riera, and are summarized in \cite{BrGPGHermitian}, as special cases of our parabolic case component count, when there is only one puncture considered.

This article involves the study of topological properties of moduli of polystable $G$-Higgs bundles equipped with a parabolic structure, building on the general parabolic $G$-Higgs definitions from \cite{BiGaRi}. Section 2 contains the basic definitions for parabolic $\mathrm{GL}\left(n,\mathbb{C} \right)$-Higgs bundles, while the more general definitions for any semisimple Lie group $G$ are put in an Appendix. In this Appendix we also work out particular examples to demonstrate the relation between the general stability condition for any group $G$ and the classical one for $\mathrm{GL}\left(n,\mathbb{C}\right)$. In Section 3 we adapt the deformation theory from the non-parabolic case to provide a calculation of the expected dimension of the moduli space of polystable parabolic $G$-Higgs bundles. Section 4 contains the construction of parabolic Teichm\"{u}ller components in full generality for any split real form $G$ of a complex simple Lie group; a proof of the parabolic Higgs analog of N. Hitchin's main theorem in \cite{Hit92} is presented here and is a generalization of the argument in \cite{Biswas3}. From Section 5 on, we are dealing with the cases when the group $G$ is Hermitian symmetric. After introducing the parabolic Toledo invariant for parabolic $\mathrm{Sp}\left( 2n,\mathbb{R} \right)$-Higgs bundles, we prove a Milnor-Wood type inequality, which allows one to introduce the notion of maximality. Section 6 is again preparatory and contains no new results; we review here the correspondence to orbifold Higgs bundles in a way that should fit into our needs. In Section 7 we introduce the topological invariants induced by this correspondence and calculate the total number of invariants in the case $G = \mathrm{Sp}\left( 2n,\mathbb{R} \right)$. In Section 8 we continue applying this component count method for the rest Hermitian symmetric Lie groups of tube type. Lastly, Section 9 contains a realization of classical results from the non-parabolic case formulated in terms of our $V$-bundle correspondence method.

\begin{ntn*} Throughout the article, we will be making use of the following notation for the corresponding moduli spaces we shall be considering:
\begin{itemize}
\item $\mathcal{M}_{par}^\alpha(G)$: moduli space of polystable parabolic $G$-Higgs bundles with fixed parabolic structure $\alpha$; all parabolic bundles have the same filtration over each $x \in D$ with the same weight $\alpha(x)$, for every $x \in D$.
\item $\mathcal{M}_{par}^{\mathbf{n}}(G)=\cup_{\alpha\in\frac{1}{n}\mathfrak{t}}\mathcal{M}_{par}^{\alpha}(G)$: the points in $\mathcal{M}_{par}^\alpha(G)$ for $\alpha\in\frac{1}{n}\mathfrak{t}$, where $\mathfrak{t}$ is the Lie algebra of a maximal torus of $H$ which is a maximal compact subgroup of $G$.
\item $\mathcal{M}_{par}(G)$: the moduli space $\mathcal{M}_{par}^{\mathbf{2}}(G)$.
\item $\mathcal{M}_{par}^{max, \alpha}(G)$: the points in $\mathcal{M}_{par}^\alpha(G)$ with maximal parabolic Toledo invariant.
\item $\mathcal{M}_{par}^{max}(G)$: the points in $\mathcal{M}_{par}^{\mathbf{2}}(G)$ with maximal parabolic Toledo invariant.
\end{itemize}
\end{ntn*}

\section{Definitions}

In this preliminary section, we  review the basic definitions for parabolic $\mathrm{GL}\left( n,\mathbb{C} \right)$-Higgs bundles; further details may be found in \cite{Biswas1}, \cite{BoYo}, or \cite{GaGoMu}. Parabolic $G$-Higgs bundles for a non-compact real reductive group $G$ were introduced in \cite{BiGaRi}, where a Hitchin-Kobayashi correspondence was also established. We have included these more general definitions in an Appendix at the end of this article, where we exhibit the cases $G = \mathrm{GL}(n,\mathbb{C})$ and $G = \mathrm{Sp}(2n,\mathbb{R})$ as well as the stability condition in detail.
\begin{defn}\label{201}
Let $X$ be a closed, connected, smooth Riemann surface of genus $g\ge 2$ and $D=\left\{ {{x}_{1}},\ldots ,{{x}_{s}} \right\}$ a divisor of $s$-many distinct points on $X$; denote this pair by $\left( X,D \right)$. A \emph{parabolic vector bundle} $E$ over $\left( X,D \right)$ is a holomorphic vector bundle $E\to X$ with \emph{parabolic structure} at each $x\in D$ (\emph{weighted flag} on each fiber ${{E}_{x}}$): \[\begin{matrix}
   {{E}_{x}}={{E}_{x,1}}\supset {{E}_{x,2}}\supset \ldots \supset {{E}_{x,r\left( x \right)+1}}=\left\{ 0 \right\}  \\
   0\le {{\alpha }_{1}}\left( x \right)<\ldots <{{\alpha }_{r\left( x \right)}}\left( x \right)<1.  \\
\end{matrix}\]
\end{defn}

We usually write $\left( E, \alpha \right)$ to denote a vector bundle equipped with a parabolic structure determined by a system of weights $\alpha \left( x \right)=\left( {{\alpha }_{1}}\left( x \right),\ldots ,{{\alpha }_{n}}\left( x \right) \right)$ at each $x\in D$. Moreover, set ${{k}_{i}}\left( x \right)=\dim\left( {{{E}_{x,i}}}/{{{E}_{x,i+1}}}\; \right)$ be the \emph{multiplicity} of the weight ${{\alpha }_{i}}\left( x \right)$. We can also write the weights repeated according to their multiplicity as
 \[0\le {{\tilde{\alpha }}_{1}}\left( x \right)\le \ldots \le {{\tilde{\alpha }}_{n}}\left( x \right)<1\]
where now $n=\mathrm{rk}E$. A weighted flag shall be called \emph{full}, if ${{k}_{i}}\left( x \right)=1$ for every $i$ and $x\in D$.

Given a pair of parabolic vector bundles the basic constructions for a parabolic subbundle, direct sum, dual and tensor product have been described in \cite{Biswas1} and \cite{GaGoMu}; we will be making frequent use of these constructions.

\begin{defn}\label{202}
A holomorphic map $f:E\to {E}'$ of parabolic vector bundles $\left( E, \alpha \right),\left( {E}', \alpha^{\prime } \right)$ is called \emph{parabolic} if ${{\alpha }_{i}}\left( x \right)>{{{\alpha }'}_{j}}\left( x \right)$ implies $f\left( {{E}_{x,i}} \right)\subset {{{E}'}_{x,j+1}}$, for every $x\in D$.\\
Furthermore, we call such map \emph{strongly parabolic} if ${{\alpha }_{i}}\left( x \right)\ge {{{\alpha }'}_{j}}\left( x \right)$ implies $f\left( {{E}_{x,i}} \right)\subset {{{E}'}_{x,j+1}}$ for every $x\in D$.\\
\end{defn}

\begin{defn}\label{203}
A notion of \emph{parabolic degree} and \emph{parabolic slope} of a vector bundle equipped with a parabolic structure can be defined as follows \[\pardeg \left( E \right)=\deg E+\sum\limits_{x\in D}{\sum\limits_{i=1}^{r\left( x \right)}{{{k}_{i}}\left( x \right){{\alpha }_{i}}\left( x \right)}}\]
\[\mathrm{par}\mu \left( \mathrm{E} \right)=\frac{\mathrm{pardeg}\left( E \right)}{\mathrm{rk}\left( E \right)}\]
\end{defn}

\begin{defn}\label{204}
A parabolic vector bundle will be called \emph{stable} (resp. \emph{semistable}), if for every non-trivial proper parabolic subbundle $F\le E$, it is $\mathrm{par}\mu \left( F \right)<\mathrm{par}\mu \left( E \right)$, (resp. $\le $).
\end{defn}

\begin{defn}\label{205}
Let $K=\Omega^{1}_{X} $ be the canonical bundle over $X$ and $E$ a parabolic vector bundle. Let $D=\{x_1,\ldots,x_s\}$. The bundle morphism $\Phi :E\to E\otimes K\left( D \right)$ will be called a \emph{parabolic Higgs field}, if it preserves the parabolic structure at each point $x\in D$:
	\[\Phi \left| _{x}\left( {{E}_{x,i}} \right) \right.\subset {{E}_{x,i}}\otimes K\left( D \right)\left| _{x} \right.\]
In particular, we call the Higgs field $\Phi$ \emph{strongly parabolic}, if
\[\Phi \left| _{x}\left( {{E}_{x,i}} \right) \right.\subset {{E}_{x,i+1}}\otimes K\left( D \right)\left| _{x} \right.,\]
in other words, $\Phi$ is a meromorphic endomorphism valued 1-form with at most simple poles along the divisor $D$, whose residue at $x\in D$ is nilpotent with respect to the filtration. Note that the divisor $D$ is always considered to be an effective divisor, and since $K\left( D \right)$ does not carry a weighted filtration, the parabolic structure on $E\otimes K\left( D \right)$ is induced only by the one on $E$.
\end{defn}
After these considerations we define parabolic Higgs bundles as follows.

\begin{defn}\label{206}
Let $K$ be the canonical bundle over $X$ and $E$ be a parabolic vector bundle over $X$. A \emph{parabolic Higgs bundle} over $\left( X,D \right)$ is given by a pair $\left( E,\Phi \right)$, where $\Phi :E\to E\otimes K\left( D \right)$ is a parabolic Higgs field.
\end{defn}

Analogously to the non-parabolic case, we may define stability as follows.

\begin{defn}\label{207}
A parabolic Higgs bundle will be called \emph{stable} (resp. \emph{semistable}), if for every $\Phi $-invariant parabolic subbundle $F\le E$ it is $\mathrm{par}\mu \left( F \right)<\mathrm{par}\mu \left( E \right)$ (resp. $\le $).
Furthermore, it will be called \emph{polystable}, if it is the direct sum of stable parabolic Higgs bundles of the same parabolic slope.
\end{defn}

In \cite{Yoko1} and \cite{Yoko2} K. Yokogawa has constructed the \emph{moduli space of semistable $K\left( D \right)$-pairs} ${{\mathsf{\mathcal{P}}}_{\alpha }}$, that is, pairs $\left( E,\Phi  \right)$ with $\Phi$ parabolic, using geometric invariant theory and has shown that it is a normal, smooth at the stable points, quasi-projective variety of dimension
	\begin{align}\dim{{\mathsf{\mathcal{P}}}_{\alpha }}=\left( 2g-2+s \right){{n}^{2}}+1,\label{paradim}\end{align}
for fixed $n=\mathrm{rk}E$, $d=\mathrm{pardeg} (E)$ and weight type $\alpha $. Moreover, in \cite{Konn} H. Konno constructed the \emph{moduli space of stable parabolic Higgs bundles} ${{\mathsf{\mathcal{N}}}_{\alpha }}$ as a hyperk{\"a}hler quotient. It is contained in ${{\mathsf{\mathcal{P}}}_{\alpha }}$ as a closed subvariety of dimension
\begin{align}\dim{{\mathsf{\mathcal{N}}}_{\alpha }}=2\left( g-1 \right){{n}^{2}}+2+2\sum\limits_{x\in D}{{{f}_{x}}},\label{sparadim}\end{align} where ${{f}_{x}}=\frac{1}{2}\left( {{n}^{2}}-\sum\limits_{i=1}^{r\left( x \right)}{{{\left( {{k}_{i}}\left( x \right) \right)}^{2}}} \right)$ is the dimension of the associated flag variety.

Parabolic $G$-Higgs bundles for a real reductive Lie group $G$ were first introduced in full generality in \cite{BiGaRi}; in the Appendix we include a brief review of these general definitions along with a detailed description of some examples. In particular, we check that the general definition of a parabolic $G$-Higgs bundle along with the (poly)stability condition in the case $G = \mathrm{GL}\left(n,\mathbb{C} \right)$ coincides with the definition included in this preliminary section. For a given parabolic structure, which we shall still denote by $\alpha$, we define $\mathcal{M}_{par}^\alpha(G)$ to be the \emph{moduli space of polystable parabolic $G$-Higgs bundles with fixed parabolic structure $\alpha$}. In particular, when $\alpha\in\frac{1}{n}\mathfrak{t}$ with $\mathfrak{t}$ the Lie algebra of a maximal torus of $H$ which is a maximal compact subgroup of $G$, we  denote $\mathcal{M}_{par}^{\mathbf{n}}(G)=\cup_{\alpha\in\frac{1}{n}\mathfrak{t}}\mathcal{M}_{par}^{\alpha}(G)$.  On the other hand, the standard notation $\mathcal{M}_{par}(G)$ will always mean in this article the moduli space in the special case when $n=2$, that is, polystable parabolic $G$-Higgs bundles with fixed parabolic structure $\alpha\in\frac{1}{2}\mathfrak{t}$.

\begin{rem}
The moduli spaces ${{\mathsf{\mathcal{P}}}_{\alpha }}$ and ${{\mathsf{\mathcal{N}}}_{\alpha }}$ considered above may be viewed as particular subspaces of $\mathcal{M}_{par}^\alpha(\mathrm{GL}\left(n,\mathbb{C} \right))$.
\end{rem}

\section{Deformation theory}

The deformation theory for parabolic $K(D)$-pairs was studied by K. Yokogawa in \cite{Yoko2}. We now adapt results from that article to the case of parabolic $G$-Higgs bundles for $G$ semisimple, analogously to the non-parabolic case studied in \S 3.3 of \cite{GaGoRi}. For a semisimple Lie group $G$, let $H\subset G$ be a maximal compact subgroup and let $\mathfrak{g}=\mathfrak{h}\oplus \mathfrak{m}$ be a Cartan decomposition  so that the Lie algebra structure of $\mathfrak{g}$ satisfies
	\[\left[ \mathfrak{h},\mathfrak{h} \right]\subset \mathfrak{h},\quad \left[ \mathfrak{h},\mathfrak{m} \right]\subset \mathfrak{m},\quad \left[ \mathfrak{m},\mathfrak{m} \right]\subset \mathfrak{h}.\]
Let ${{\mathfrak{g}}^{\mathbb{C}}}={{\mathfrak{h}}^{\mathbb{C}}}\oplus {{\mathfrak{m}}^{\mathbb{C}}}$ be the complexification of the Cartan decomposition and consider the sheaves $PE\left( {{\mathfrak{m}}^{\mathbb{C}}} \right)$ of \emph{parabolic sections of $E\left( {{\mathfrak{m}}^{\mathbb{C}}} \right)$} and $NE\left( {{\mathfrak{m}}^{\mathbb{C}}} \right)$ of \emph{strongly parabolic sections of $E\left( {{\mathfrak{m}}^{\mathbb{C}}} \right)$}; for the full definition, see the Appendix at the end of this article .

\begin{defn}\label{301}
Let $\left( E,\varphi  \right)$ be a parabolic $G$-Higgs bundle over $\left( X,D \right)$. The \emph{deformation complex} of the \emph{parabolic $G$-Higgs bundle} $\left( E,\varphi  \right)$ is the following complex of sheaves
	\[{{C}_P^{\bullet }}\left(G, E,\varphi  \right):PE\left( {{\mathfrak{h}}^{\mathbb{C}}} \right)\xrightarrow{[-,\varphi]} PE\left( {{\mathfrak{m}}^{\mathbb{C}}} \right)\otimes K\left( D \right),\]
where $[-,\varphi] :{{\mathfrak{h}}^{\mathbb{C}}}\to \mathrm{End}\left( {{\mathfrak{m}}^{\mathbb{C}}} \right)$ is the commutator $\psi \mapsto \psi\varphi-\varphi\psi$.\\
Assuming now that $\left( E,\varphi  \right)$ is strongly parabolic, the \emph{deformation complex} of the \emph{strongly parabolic $G$-Higgs bundle} $\left( E,\varphi  \right)$ is the complex of sheaves
	\[{{C}_N^{\bullet }}\left(G, E,\varphi  \right):PE\left( {{\mathfrak{h}}^{\mathbb{C}}} \right)\xrightarrow{[-,\varphi]} NE\left( {{\mathfrak{m}}^{\mathbb{C}}} \right)\otimes K\left( D \right).\]
\end{defn}
The above definition makes sense, since for instance in the parabolic case $\varphi $ is a meromorphic section of $PE\left( {{\mathfrak{m}}^{\mathbb{C}}} \right)\otimes K\left( D \right)$ and $\left[ {\mathfrak{m}_\alpha^{\mathbb{C}}},{\mathfrak{h}_\alpha^{\mathbb{C}}} \right]\subseteq {\mathfrak{m}_\alpha^{\mathbb{C}}}$ for any $\alpha\in \sqrt{-1}\bar{\mathcal{A}}$; we refer to the definitions in the Appendix for details. An analogous statement is true also in the strongly parabolic Higgs bundle case. Whenever there would be no ambiguity, we shall use the notation ${{C}_i^{\bullet }}\left( E,\varphi  \right)$ for the deformation complex, where $i=P,N$.

\begin{prop}\label{302}
The space of infinitesimal deformations of a parabolic $G$-Higgs bundle $\left( E,\varphi  \right)$ is naturally isomorphic to the hypercohomology group ${{\mathbb{H}}^{1}}\left( {{C}_P^{\bullet }}\left( E,\varphi  \right) \right)$. Analogously, the space of infinitesimal deformations of a strongly parabolic $G$-Higgs bundle $\left( E,\varphi  \right)$ is naturally isomorphic to the hypercohomology group ${{\mathbb{H}}^{1}}\left( {{C}_N^{\bullet }}\left( E,\varphi  \right) \right)$.
\end{prop}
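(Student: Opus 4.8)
The plan is to run the classical Čech-theoretic argument for infinitesimal deformations of Higgs bundles, exactly as in the non-parabolic case of \cite{GaGoRi}, while keeping careful track of the parabolic structure through the sheaves $PE(\mathfrak{h}^{\mathbb{C}})$ and $PE(\mathfrak{m}^{\mathbb{C}})$. First I would fix a good open cover $\{U_i\}$ of $X$ (fine enough that Čech cohomology on the cover computes the hypercohomology of $C_P^{\bullet}$) and present a first-order deformation of $(E,\varphi)$ as a family $(E_\epsilon,\varphi_\epsilon)$ over $\mathrm{Spec}\,\mathbb{C}[\epsilon]/(\epsilon^2)$ restricting to $(E,\varphi)$ modulo $\epsilon$. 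Choosing local trivializations adapted to the weighted flags at each $x\in D$, the deformed bundle $E_\epsilon$ is encoded by transition data of the form $g_{ij}(1+\epsilon a_{ij})$; the requirement that $E_\epsilon$ carry the \emph{same} parabolic structure as $E$ forces each $a_{ij}$ to be a section of the parabolic endomorphism sheaf, so $a_{ij}\in PE(\mathfrak{h}^{\mathbb{C}})(U_{ij})$, and associativity of the gluing yields the Čech identity $a_{ik}=a_{ij}+a_{jk}$ to first order.

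Next I would deform the Higgs field, writing $\varphi_\epsilon|_{U_i}=\varphi+\epsilon\,\psi_i$ with $\psi_i$ a local section of $PE(\mathfrak{m}^{\mathbb{C}})\otimes K(D)$, parabolicity of $\psi_i$ being precisely the condition that $\varphi_\epsilon$ remain a parabolic Higgs field in the sense of Definition \ref{205}. Comparing $\varphi_\epsilon$ over the overlap $U_{ij}$ under the deformed gluing produces the relation $\psi_j-\psi_i=[a_{ij},\varphi]$. Hence the datum $(\{a_{ij}\},\{\psi_i\})$ is exactly a degree-one cocycle for the total complex of the Čech double complex of $C_P^{\bullet}(E,\varphi)$. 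Two deformations are isomorphic over $\mathbb{C}[\epsilon]/(\epsilon^2)$ if and only if they differ by a gauge transformation $1+\epsilon\,b_i$ with $b_i\in PE(\mathfrak{h}^{\mathbb{C}})(U_i)$, whose effect is precisely the coboundary $(\{b_j-b_i\},\{[b_i,\varphi]\})$. This produces the desired natural bijection between the space of infinitesimal deformations and $\mathbb{H}^1\!\left(C_P^{\bullet}(E,\varphi)\right)$.

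For the strongly parabolic statement I would rerun the identical argument, imposing in addition that the residue of $\varphi_\epsilon$ at each $x\in D$ remain nilpotent with respect to the filtration. This constrains the Higgs-field variation to lie in the strongly parabolic sheaf $NE(\mathfrak{m}^{\mathbb{C}})\otimes K(D)$, so that the cocycle now lives in the total complex of $C_N^{\bullet}(E,\varphi)$, and the same bookkeeping identifies the deformation space with $\mathbb{H}^1\!\left(C_N^{\bullet}(E,\varphi)\right)$.

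The step I expect to be the main obstacle is verifying that the \emph{parabolic} sheaves are the correct coefficients, that is, that infinitesimal automorphisms and Higgs-field variations preserving the weighted flags and the prescribed pole/residue behavior along $D$ are captured exactly by $PE(\mathfrak{h}^{\mathbb{C}})$ and $PE(\mathfrak{m}^{\mathbb{C}})$ (resp. $NE(\mathfrak{m}^{\mathbb{C}})$), and not by their underlying non-parabolic sheaves. This requires analyzing the local model at each $x\in D$ and using the compatibility $[\mathfrak{m}_\alpha^{\mathbb{C}},\mathfrak{h}_\alpha^{\mathbb{C}}]\subseteq\mathfrak{m}_\alpha^{\mathbb{C}}$ recorded after Definition \ref{301} to confirm that $[-,\varphi]$ genuinely carries $PE(\mathfrak{h}^{\mathbb{C}})$ into $PE(\mathfrak{m}^{\mathbb{C}})\otimes K(D)$ and respects the filtrations. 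Once this local verification is in place, the global Čech computation is routine.
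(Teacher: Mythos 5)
Your proposal is correct and follows essentially the same route as the paper: the paper's proof consists of citing statement (3.2) of M. Thaddeus \cite{Thad}, whose argument is precisely this \v{C}ech-cocycle computation over $\mathbb{C}[\epsilon]/(\epsilon^2)$ with the parabolic sheaves $PE(\mathfrak{h}^{\mathbb{C}})$, $PE(\mathfrak{m}^{\mathbb{C}})\otimes K(D)$ (resp. $NE(\mathfrak{m}^{\mathbb{C}})\otimes K(D)$) as coefficients. You have simply written out in detail the argument that the paper delegates to the reference.
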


\begin{proof} The proof follows exactly the same arguments as the proof of statement (3.2) from \cite{Thad} of M. Thaddeus, who described the infinitesimal deformations of parabolic Higgs bundles.
\end{proof}
For any parabolic $G$-Higgs bundle $\left( E,\varphi  \right)$ there is a natural long exact sequence
	\[0\to {{\mathbb{H}}^{0}}\left( {{C}_P^{\bullet }}\left( E,\varphi  \right) \right)\to {{H}^{0}}\left( PE\left( {{\mathfrak{h}}^{\mathbb{C}}} \right) \right)\xrightarrow{[-,\varphi]}{{H}^{0}}\left( PE\left( {{\mathfrak{m}}^{\mathbb{C}}} \right)\otimes K\left( D \right) \right)\]
	\[\to {{\mathbb{H}}^{1}}\left( {{C}_P^{\bullet }}\left( E,\varphi  \right) \right)\to {{H}^{1}}\left( PE\left( {{\mathfrak{h}}^{\mathbb{C}}} \right) \right)\xrightarrow{[-,\varphi]}{{H}^{1}}\left( PE\left( {{\mathfrak{m}}^{\mathbb{C}}} \right)\otimes K\left( D \right) \right)\to {{\mathbb{H}}^{2}}\left( {{C}_P^{\bullet }}\left( E,\varphi  \right) \right)\to 0.\]

The Serre duality theorem for parabolic sheaves (Proposition 3.7 in \cite{Yoko2}) provides that there are natural isomorphisms for $i=P,N$:
	\[{{\mathbb{H}}^{i}}\left( {{C}_i^{\bullet }}\left( E,\varphi  \right) \right)\cong {{\mathbb{H}}^{2-i}}{{\left( {{C}_i^{\bullet }}{{\left( E,\varphi  \right)}^{*}}\otimes K\left( D \right) \right)}^{*}},\]
where the dual of the deformation complex ${{C}_i^{\bullet }}\left( E,\varphi  \right)$ is defined as
	\[{{C}_P^{\bullet }}{{\left( E,\varphi  \right)}^{*}}:NE\left( {{\mathfrak{m}}^{\mathbb{C}}} \right)\otimes {{\left( K\left( D \right) \right)}^{-1}}\xrightarrow{[-,\varphi]}NE\left( {{\mathfrak{h}}^{\mathbb{C}}} \right),\]
	while the dual of the deformation complex for strongly parabolic pairs is respectively
	\[{{C}_N^{\bullet }}{{\left( E,\varphi  \right)}^{*}}:PE\left( {{\mathfrak{m}}^{\mathbb{C}}} \right)\otimes {{\left( K\left( D \right) \right)}^{-1}}\xrightarrow{[-,\varphi]}NE\left( {{\mathfrak{h}}^{\mathbb{C}}} \right).\]
	
An important special case is when $G$ is a complex group:
\begin{prop}\label{303}
Assume that $G$ is a complex semisimple Lie group. Then there is a natural isomorphism
	\[{{\mathbb{H}}^{2}}\left( {{C}_N^{\bullet }}\left( E,\varphi  \right) \right)\cong {{\mathbb{H}}^{0}}{{\left( {{C}_N^{\bullet }}\left( E,\varphi  \right) \right)}^{*}}.\]
\end{prop}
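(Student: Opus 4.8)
The plan is to exploit the feature that distinguishes a complex group among all real forms: its Cartan decomposition $\mathfrak{g}=\mathfrak{h}\oplus\mathfrak{m}$ satisfies $\mathfrak{m}=\sqrt{-1}\,\mathfrak{h}$, where $\sqrt{-1}$ denotes the complex structure that $\mathfrak{g}$ already carries as the Lie algebra of a complex group. Multiplication by $\sqrt{-1}$ then supplies an $H^{\mathbb{C}}$-equivariant isomorphism $j\colon \mathfrak{h}^{\mathbb{C}}\xrightarrow{\sim}\mathfrak{m}^{\mathbb{C}}$, together with its companion $\mathfrak{m}^{\mathbb{C}}\xrightarrow{\sim}\mathfrak{h}^{\mathbb{C}}$ in the opposite direction. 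Since $j$ is a scalar multiplication it commutes with the adjoint action of $H^{\mathbb{C}}$, hence preserves the weight decompositions at each $x\in D$ that cut out the parabolic and strongly parabolic sheaf structures; thus $j$ induces isomorphisms of parabolic sheaves $PE(\mathfrak{h}^{\mathbb{C}})\cong PE(\mathfrak{m}^{\mathbb{C}})$ and $NE(\mathfrak{m}^{\mathbb{C}})\cong NE(\mathfrak{h}^{\mathbb{C}})$.

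First I would write out $C_N^{\bullet}(E,\varphi)^{*}\otimes K(D)$ explicitly from the dual complex recorded above; after cancelling the twists it reads $PE(\mathfrak{m}^{\mathbb{C}})\xrightarrow{[-,\varphi]}NE(\mathfrak{h}^{\mathbb{C}})\otimes K(D)$, to be compared with $C_N^{\bullet}(E,\varphi)\colon PE(\mathfrak{h}^{\mathbb{C}})\xrightarrow{[-,\varphi]}NE(\mathfrak{m}^{\mathbb{C}})\otimes K(D)$. I would then check that the two copies of $j$ (in degree $0$, and in the opposite direction in degree $1$) assemble into an isomorphism of complexes. The one point to verify is the commutativity of the resulting square, and this is exactly where the complex structure enters: because the bracket of $\mathfrak{g}$ is $\mathbb{C}$-bilinear one has $[\sqrt{-1}\,\psi,\varphi]=\sqrt{-1}\,[\psi,\varphi]$, so both routes around the square equal $\sqrt{-1}\,[\psi,\varphi]$ and the square commutes. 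This yields a natural isomorphism of complexes $C_N^{\bullet}(E,\varphi)\cong C_N^{\bullet}(E,\varphi)^{*}\otimes K(D)$.

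Finally I would feed this self-duality into the parabolic Serre duality recorded before the statement. Taking the $N$-complex in degree $2$ gives $\mathbb{H}^{2}(C_N^{\bullet}(E,\varphi))\cong\mathbb{H}^{0}(C_N^{\bullet}(E,\varphi)^{*}\otimes K(D))^{*}$, and substituting the isomorphism of complexes from the previous step replaces the right-hand side by $\mathbb{H}^{0}(C_N^{\bullet}(E,\varphi))^{*}$, which is precisely the assertion.

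I expect the main obstacle to be bookkeeping rather than anything deep. One must confirm that the scalar isomorphism $j$ genuinely carries the strongly parabolic structure $N$ to itself and the parabolic structure $P$ to itself, i.e. that it is filtration-preserving at each $x\in D$, and that the self-pairing of each $\mathfrak{h}^{\mathbb{C}}$- and $\mathfrak{m}^{\mathbb{C}}$-factor via the Killing form used to form the dual complex is compatible with $j$, so that no spurious sign or twist slips into the Serre duality pairing. Both are routine consequences of $H^{\mathbb{C}}$-equivariance, but they are exactly the steps that use that $G$ is complex rather than an arbitrary real form.
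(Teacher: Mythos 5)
Your proposal is correct and follows essentially the same route as the paper: both arguments use the complex structure of $\mathfrak{g}$ to identify $\mathfrak{h}^{\mathbb{C}}\cong\mathfrak{m}^{\mathbb{C}}$ (the paper simply writes both as $\mathfrak{g}$, so that $C_N^{\bullet}(E,\varphi)$ becomes $PE(\mathfrak{g})\xrightarrow{\mathrm{ad}(\varphi)}NE(\mathfrak{g})\otimes K(D)$), observe that the complex is thereby isomorphic to its own dual twisted by $K(D)$ up to an irrelevant sign, and conclude by parabolic Serre duality. Your explicit construction of $j$ and the commuting-square check via $\mathbb{C}$-bilinearity of the bracket just spells out what the paper compresses into the phrase ``the deformation complex is dual to itself, except for a sign in the map, which does not affect cohomology.''
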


\begin{proof}When $G$ is complex, $\mathrm{ad:}\,\mathfrak{g}\to \mathfrak{g}$ and the Cartan decomposition of $\mathfrak{g}$ is $\mathfrak{g}=\mathfrak{u}+i\mathfrak{u}$, where $\mathfrak{u}=\mathrm{Lie}\left( U \right)$ for $U\subset G$ a maximal compact subgroup. Thus, in this case $\varphi \in NE\left( \mathfrak{g} \right)\otimes K\left( D \right)$. Moreover, for a complex group $G$ the deformation complex is dual to itself, except for a sign in the map, which does not affect cohomology:
	\[{{C}^{\bullet }_N} {{\left( E,\varphi  \right)}^{*}}\otimes K\left( D \right):PE\left( \mathfrak{g} \right)\xrightarrow{-\mathrm{ad}\left( \varphi  \right)}NE\left( \mathfrak{g} \right)\otimes K\left( D \right).\]
The result now follows from Serre duality.
\end{proof}

The proof of the next proposition is immediate, since $NE\left( {{\mathfrak{h}}^{\mathbb{C}}} \right)\oplus NE\left( {{\mathfrak{m}}^{\mathbb{C}}} \right)=NE\left( {{\mathfrak{g}}^{\mathbb{C}}} \right)$, given the Cartan decomposition ${{\mathfrak{g}}^{\mathbb{C}}}={{\mathfrak{h}}^{\mathbb{C}}}\oplus {{\mathfrak{m}}^{\mathbb{C}}}$. The Corollary that follows is also immediate from Serre duality:

\begin{prop}\label{304}
Let $G$ be a real semisimple group and let ${{G}^{\mathbb{C}}}$ be its complexification. Let $\left( E,\varphi  \right)$ be a strongly parabolic $G$-Higgs bundle. Then there is an isomorphism of complexes
	\[C_{N}^{\bullet }({G}^{\mathbb{C}}, E,\varphi )\cong C_{N}^{\bullet }\left(G, E,\varphi  \right)\oplus C_{N}^{\bullet }{{\left(G, E,\varphi  \right)}^{*}}\otimes K\left( D \right),\]
where $C_{N}^{\bullet }\left({G}^{\mathbb{C}}, E,\varphi  \right)$ denotes the deformation complex of $\left( E,\varphi  \right)$ viewed as a strongly parabolic ${{G}^{\mathbb{C}}}$-Higgs bundle, while $C_{N}^{\bullet }\left(G, E,\varphi  \right)$ denotes the deformation complex of $\left( E,\varphi  \right)$ viewed as a strongly parabolic $G$-Higgs bundle.
\end{prop}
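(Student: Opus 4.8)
The plan is to produce the isomorphism \emph{on the nose} as the splitting of the $G^{\mathbb{C}}$-deformation complex induced by the Cartan decomposition $\mathfrak{g}^{\mathbb{C}}=\mathfrak{h}^{\mathbb{C}}\oplus\mathfrak{m}^{\mathbb{C}}$. First I would record the explicit shape of the deformation complex of $(E,\varphi)$ regarded as a strongly parabolic $G^{\mathbb{C}}$-Higgs bundle, exactly as in the computation carried out in the proof of Proposition \ref{303}. Since $G^{\mathbb{C}}$ is a complex group, its maximal compact $U$ has Lie algebra $\mathfrak{u}$ whose complexification is all of $\mathfrak{g}^{\mathbb{C}}$, so both the $\mathfrak{h}$- and the $\mathfrak{m}$-part of the Cartan decomposition of $G^{\mathbb{C}}$ complexify to $\mathfrak{g}^{\mathbb{C}}$; hence
\[
C_N^{\bullet}(G^{\mathbb{C}},E,\varphi):\ PE(\mathfrak{g}^{\mathbb{C}})\xrightarrow{[-,\varphi]}NE(\mathfrak{g}^{\mathbb{C}})\otimes K(D),
\]
where $\varphi$ is a section valued in $\mathfrak{m}^{\mathbb{C}}\subseteq\mathfrak{g}^{\mathbb{C}}$.

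Next I would split both terms by means of the complexified Cartan decomposition. As observed just before the statement, one has $NE(\mathfrak{g}^{\mathbb{C}})=NE(\mathfrak{h}^{\mathbb{C}})\oplus NE(\mathfrak{m}^{\mathbb{C}})$, and likewise $PE(\mathfrak{g}^{\mathbb{C}})=PE(\mathfrak{h}^{\mathbb{C}})\oplus PE(\mathfrak{m}^{\mathbb{C}})$. The crux is that the differential $[-,\varphi]$ is block-diagonal for these splittings: because $\varphi$ takes values in $\mathfrak{m}^{\mathbb{C}}$, the bracket relations $[\mathfrak{h}^{\mathbb{C}},\mathfrak{m}^{\mathbb{C}}]\subseteq\mathfrak{m}^{\mathbb{C}}$ and $[\mathfrak{m}^{\mathbb{C}},\mathfrak{m}^{\mathbb{C}}]\subseteq\mathfrak{h}^{\mathbb{C}}$ force $[-,\varphi]$ to send $PE(\mathfrak{h}^{\mathbb{C}})$ into $NE(\mathfrak{m}^{\mathbb{C}})\otimes K(D)$ and $PE(\mathfrak{m}^{\mathbb{C}})$ into $NE(\mathfrak{h}^{\mathbb{C}})\otimes K(D)$, with no cross terms. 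Thus $C_N^{\bullet}(G^{\mathbb{C}},E,\varphi)$ decomposes as an internal direct sum of the two subcomplexes
\[
PE(\mathfrak{h}^{\mathbb{C}})\xrightarrow{[-,\varphi]}NE(\mathfrak{m}^{\mathbb{C}})\otimes K(D)
\qquad\text{and}\qquad
PE(\mathfrak{m}^{\mathbb{C}})\xrightarrow{[-,\varphi]}NE(\mathfrak{h}^{\mathbb{C}})\otimes K(D).
\]

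Finally I would identify the two summands with the claimed objects. The first is, by Definition \ref{301}, precisely $C_N^{\bullet}(G,E,\varphi)$. For the second, I would compare with the dual complex $C_N^{\bullet}(G,E,\varphi)^{*}$ recorded above, namely $PE(\mathfrak{m}^{\mathbb{C}})\otimes(K(D))^{-1}\to NE(\mathfrak{h}^{\mathbb{C}})$, and twist it by $K(D)$; this reproduces exactly $PE(\mathfrak{m}^{\mathbb{C}})\to NE(\mathfrak{h}^{\mathbb{C}})\otimes K(D)$, so the second summand is $C_N^{\bullet}(G,E,\varphi)^{*}\otimes K(D)$, yielding the asserted isomorphism of complexes.

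I would expect no genuine obstacle: the mathematical content is only the algebraic fact that $\mathrm{ad}(\varphi)$ respects the Cartan grading, which is immediate from the bracket relations. The single point deserving care is the bookkeeping in the last step, where one must check that the parabolic versus strongly parabolic labels ($P$ against $N$) and the $K(D)$-twist in the dual complex match the second summand correctly (and that the harmless sign in the self-duality of the complex group's differential, noted in Proposition \ref{303}, does not affect the identification). Once the shape of the complex for $G^{\mathbb{C}}$ is pinned down via Proposition \ref{303}, the remainder is a routine verification.
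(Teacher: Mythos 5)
Your proof is correct and follows essentially the same route as the paper: the paper declares the result immediate from the Cartan splitting $NE(\mathfrak{g}^{\mathbb{C}})=NE(\mathfrak{h}^{\mathbb{C}})\oplus NE(\mathfrak{m}^{\mathbb{C}})$, and your write-up simply supplies the details it leaves implicit, namely that $[\mathfrak{h}^{\mathbb{C}},\mathfrak{m}^{\mathbb{C}}]\subseteq\mathfrak{m}^{\mathbb{C}}$ and $[\mathfrak{m}^{\mathbb{C}},\mathfrak{m}^{\mathbb{C}}]\subseteq\mathfrak{h}^{\mathbb{C}}$ make $[-,\varphi]$ respect the splitting into the two subcomplexes, the first being $C_N^{\bullet}(G,E,\varphi)$ and the second being exactly the $K(D)$-twist of the dual complex $PE(\mathfrak{m}^{\mathbb{C}})\otimes(K(D))^{-1}\to NE(\mathfrak{h}^{\mathbb{C}})$ recorded in the paper.
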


\begin{cor}\label{305}
With the same hypotheses as in the previous Proposition, there is an isomorphism
\[{{\mathbb{H}}^{0}}\left( C_{N}^{\bullet }({G}^{\mathbb{C}}, E,\varphi ) \right)\cong {{\mathbb{H}}^{0}}\left( C_{N}^{\bullet }\left( G,E,\varphi  \right) \right)\oplus {{\mathbb{H}}^{2}}{{\left( C_{N}^{\bullet }\left(G, E,\varphi  \right) \right)}^{*}}.\]
\end{cor}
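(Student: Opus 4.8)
The plan is to obtain the stated isomorphism as a formal consequence of the decomposition of complexes in Proposition \ref{304}, combined with the Serre duality for parabolic sheaves recorded above. Since the hypotheses here are exactly those of Proposition \ref{304}, I may start directly from the isomorphism of complexes
\[
C_N^\bullet(G^\mathbb{C}, E,\varphi) \cong C_N^\bullet(G, E,\varphi) \oplus C_N^\bullet(G, E,\varphi)^* \otimes K(D).
\]

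First I would apply the degree-zero hypercohomology functor to both sides. Because hypercohomology of bounded complexes of sheaves is additive with respect to finite direct sums, this yields
\[
\mathbb{H}^0(C_N^\bullet(G^\mathbb{C}, E,\varphi)) \cong \mathbb{H}^0(C_N^\bullet(G, E,\varphi)) \oplus \mathbb{H}^0(C_N^\bullet(G, E,\varphi)^* \otimes K(D)).
\]
The first summand already matches the desired expression, so it remains to identify the second summand $\mathbb{H}^0(C_N^\bullet(G, E,\varphi)^* \otimes K(D))$ with $\mathbb{H}^2(C_N^\bullet(G, E,\varphi))^*$. For this I would invoke the Serre duality isomorphism for parabolic sheaves stated above, applied to the strongly parabolic deformation complex in cohomological degree two,
\[
\mathbb{H}^2(C_N^\bullet(G, E,\varphi)) \cong \mathbb{H}^0(C_N^\bullet(G, E,\varphi)^* \otimes K(D))^*.
\]
Since $X$ is a compact Riemann surface these hypercohomology groups are finite-dimensional, so dualizing and using reflexivity of finite-dimensional vector spaces gives $\mathbb{H}^0(C_N^\bullet(G, E,\varphi)^* \otimes K(D)) \cong \mathbb{H}^2(C_N^\bullet(G, E,\varphi))^*$. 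Substituting this back yields the claim.

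The argument is essentially formal, so the only points requiring care are bookkeeping ones. The main (mild) obstacle is to confirm that the parabolic Serre duality is applied in the correct cohomological degree — degree two on the left is paired with degree zero on the right, which is exactly what produces the second summand — and that the dual of the dual complex tensored with $K(D)$ returns precisely the summand appearing in Proposition \ref{304}, rather than something differing by a spurious twist or normalization. I would also note that, since both the additivity of hypercohomology and the parabolic Serre duality isomorphism are natural, the resulting isomorphism in Corollary \ref{305} is natural as well, and no compatibility issues arise.
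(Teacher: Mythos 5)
Your proposal is correct and follows exactly the route the paper intends: the paper derives Corollary \ref{305} by applying hypercohomology to the splitting of complexes in Proposition \ref{304} and then invoking the parabolic Serre duality ${{\mathbb{H}}^{2}}\left( C_{N}^{\bullet }\left(G, E,\varphi \right) \right)\cong {{\mathbb{H}}^{0}}{{\left( C_{N}^{\bullet }{{\left(G, E,\varphi \right)}^{*}}\otimes K\left( D \right) \right)}^{*}}$, which is precisely your degree-two-paired-with-degree-zero step. The paper simply calls this ``immediate from Serre duality,'' and your write-up supplies the same bookkeeping in full detail.
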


Consider now for a semisimple Lie group $G$, a stable and simple parabolic $G$-Higgs bundle $\left( E,\varphi  \right)$. As in the non-parabolic case \cite{GaGoRi}, if a (local) universal family exists then the dimension of the component of the moduli space containing the pair $\left( E,\varphi  \right)$ is equal to the dimension of the infinitesimal deformation space ${{\mathbb{H}}^{1}}\left( {{C}^{\bullet }}\left( E,\varphi  \right) \right)$; this dimension is referred to as the \emph{expected dimension} of the moduli space. In this situation, ${{\mathbb{H}}^{0}}\left( C_{i}^{\bullet }\left({G}^{\mathbb{C}}, E,\varphi  \right) \right)=0$ and so
\[{{\mathbb{H}}^{0}}\left( C^{\bullet }_i \left(G, E,\varphi  \right) \right)=0={{\mathbb{H}}^{2}}\left( C^{\bullet }_i\left(G, E,\varphi  \right) \right),\]
for $G$ semisimple and $i=P,N$. The long exact sequence then provides that
\[\dim{{\mathbb{H}}^{1}}\left( C^{\bullet }_i \left( E,\varphi  \right) \right)=-\chi \left( C^{\bullet }_i \left( E,\varphi  \right) \right),\]
where for simplicity we are keeping the same notation $\left( C^{\bullet }_i \left( E,\varphi  \right) \right)$ for the complex of sheaves for the group $G$ and $i=P,N$. The expected dimension of the moduli space ${{\mathsf{\mathcal{M}}}_{spar}}\left( G \right)$ of polystable parabolic $G$-Higgs bundles can be calculated using the Hirzebruch-Riemann-Roch formula and is independent of the choice of $\left( E,\varphi  \right)$:

\begin{prop}\label{306}
For a semisimple Lie group $G$, the moduli space ${{\mathsf{\mathcal{M}}}^{\alpha}_{par}}\left( G\right)$ of polystable parabolic $G$-Higgs bundles with parabolic structure $\alpha$ and $\mathbb{H}^2(C_P^\bullet(E,\varphi))=0$ for generic $(E,\varphi)$ has expected dimension
\begin{align}    -\chi(C_P^\bullet(E,\varphi)) &= (g-1)\dim G^{\mathbb{C}}+ s\cdot \mathrm{rk}\left(E\left(\mathfrak{m}^{\mathbb{C}}\right)\right)\nonumber\\ &+ \sum_{i} \left\{\dim\left(E\left(\mathfrak{h}^{\mathbb{C}}\right)_{x_i}/\mathfrak{h}^{-}_{\alpha_i}\right) - \dim\left(E\left(\mathfrak{m}^{\mathbb{C}}\right)_{x_i}/\mathfrak{m}^{-}_{\alpha_i}\right)  \right\},\end{align}
where $g$ is the genus of the Riemann surface $X$ and $s$ is the number of points in $D$. Here $\mathfrak{h}^{-}_{\alpha_i}$, $\mathfrak{m}^{-}_{\alpha_i}$ means the part of $\mathfrak{h}$, $\mathfrak{m}$ that is bounded by the $P_{\alpha_i}$-action as in Definition \ref{bs-}. Moreover, the moduli space of polystable  strongly parabolic $G$-Higgs bundles with parabolic structure $\alpha$ ${{\mathsf{\mathcal{M}}}^{\alpha}_{spar}}\left( G\right)$ has expected dimension
\begin{align}    -\chi(C_N^\bullet(E,\varphi))& = (g-1)\dim G^{\mathbb{C}}+ s\cdot \mathrm{rk}\left(E\left(\mathfrak{m}^{\mathbb{C}}\right)\right)\nonumber \\ &+ \sum_{i} \left\{\dim\left(E\left(\mathfrak{h}^{\mathbb{C}}\right)_{x_i}/\mathfrak{h}^{-}_{\alpha_i}\right) - \dim\left(E\left(\mathfrak{m}^{\mathbb{C}}\right)_{x_i}/\mathfrak{m}^{<0}_{\alpha_i}\right)  \right\},\end{align}
where $\mathfrak{m}^{<0}_{\alpha_i}$ is defined as in Definition \ref{bsls}.
\end{prop}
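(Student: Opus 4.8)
The plan is to compute $\chi(C_P^\bullet(E,\varphi))$ from the two-term structure of the deformation complex and apply Riemann--Roch termwise. Because the hypercohomology of a two-term complex of sheaves satisfies
\[
\chi(C_P^\bullet(E,\varphi)) = \chi\bigl(PE(\mathfrak{h}^{\mathbb{C}})\bigr) - \chi\bigl(PE(\mathfrak{m}^{\mathbb{C}})\otimes K(D)\bigr),
\]
and because the hypothesis $\mathbb{H}^0=\mathbb{H}^2=0$ together with the long exact sequence give $\dim\mathbb{H}^1 = -\chi(C_P^\bullet)$, it suffices to evaluate the right-hand side. First I would apply $\chi(\mathcal{F}) = \deg\mathcal{F} + \mathrm{rk}(\mathcal{F})(1-g)$ to each locally free sheaf. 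Writing $h=\dim\mathfrak{h}^{\mathbb{C}}$ and $m=\dim\mathfrak{m}^{\mathbb{C}}=\mathrm{rk}(E(\mathfrak{m}^{\mathbb{C}}))$ and using $\deg K(D)=2g-2+s$, a short computation yields
\[
-\chi(C_P^\bullet(E,\varphi)) = (g-1)(h+m) + ms + \bigl[\deg PE(\mathfrak{m}^{\mathbb{C}}) - \deg PE(\mathfrak{h}^{\mathbb{C}})\bigr],
\]
so the rank contributions already account for the first two terms $(g-1)\dim G^{\mathbb{C}} + s\cdot\mathrm{rk}(E(\mathfrak{m}^{\mathbb{C}}))$ of the claimed formula.

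It then remains to identify the degree difference $\deg PE(\mathfrak{m}^{\mathbb{C}}) - \deg PE(\mathfrak{h}^{\mathbb{C}})$ with the local sum over the $x_i$. Here I would use that $G$ is semisimple: the Killing form is a nondegenerate $\mathrm{Ad}$-invariant pairing, nondegenerate separately on $\mathfrak{h}^{\mathbb{C}}$ and on $\mathfrak{m}^{\mathbb{C}}$, and preserved by the adjoint action of the parabolic subgroup $P_{\alpha}\subset H^{\mathbb{C}}$ defining the parabolic structure. Hence $E(\mathfrak{h}^{\mathbb{C}})$ and $E(\mathfrak{m}^{\mathbb{C}})$ are each self-dual as parabolic bundles, so $\pardeg E(\mathfrak{h}^{\mathbb{C}}) = \pardeg E(\mathfrak{m}^{\mathbb{C}}) = 0$. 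Since the ordinary degree of a sheaf of parabolic sections differs from the parabolic degree of the underlying bundle only by a sum of local weight contributions supported on $D$, the difference $\deg PE(\mathfrak{m}^{\mathbb{C}}) - \deg PE(\mathfrak{h}^{\mathbb{C}})$ reduces to a quantity localized at each $x_i$ and determined by the filtration induced there by the $P_{\alpha_i}$-action.

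The technical heart of the argument, and the step I expect to be hardest, is the local identification at each $x_i$: one must show that this weight contribution equals $\dim\bigl(E(\mathfrak{h}^{\mathbb{C}})_{x_i}/\mathfrak{h}^{-}_{\alpha_i}\bigr) - \dim\bigl(E(\mathfrak{m}^{\mathbb{C}})_{x_i}/\mathfrak{m}^{-}_{\alpha_i}\bigr)$. This requires unwinding the definition of the bounded subspaces $\mathfrak{h}^{-}_{\alpha_i}$, $\mathfrak{m}^{-}_{\alpha_i}$ (Definition \ref{bs-}) as the fiber parts that extend as genuine parabolic sections, and then reading off, in a basis of $\mathrm{ad}(P_{\alpha_i})$-eigenvectors, how the induced parabolic weights on the adjoint bundle produce precisely these quotient dimensions. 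Carrying this bookkeeping out on $\mathfrak{h}^{\mathbb{C}}$ and $\mathfrak{m}^{\mathbb{C}}$ and summing over $i$ completes the first formula.

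Finally, the strongly parabolic case is entirely parallel: replacing the sheaf $PE(\mathfrak{m}^{\mathbb{C}})$ by the sheaf $NE(\mathfrak{m}^{\mathbb{C}})$ of strongly parabolic sections throughout, the identical Riemann--Roch computation applies, the only change being that the strict weight inequality in the definition of strongly parabolic sections replaces $\mathfrak{m}^{-}_{\alpha_i}$ by $\mathfrak{m}^{<0}_{\alpha_i}$ (Definition \ref{bsls}), which yields the second displayed formula.
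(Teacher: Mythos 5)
Your first half is correct and is in fact identical to the paper's own computation: the termwise formula $\chi(C_P^\bullet)=\chi(PE(\mathfrak{h}^{\mathbb{C}}))-\chi(PE(\mathfrak{m}^{\mathbb{C}})\otimes K(D))$, Riemann--Roch, and the reduction of everything to the degree difference $\deg PE(\mathfrak{m}^{\mathbb{C}})-\deg PE(\mathfrak{h}^{\mathbb{C}})$ are all sound, and they do produce the terms $(g-1)\dim G^{\mathbb{C}}+s\cdot\mathrm{rk}(E(\mathfrak{m}^{\mathbb{C}}))$.

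The gap is in the second half: the identification of that degree difference with $\sum_i\{\dim(E(\mathfrak{h}^{\mathbb{C}})_{x_i}/\mathfrak{h}^{-}_{\alpha_i})-\dim(E(\mathfrak{m}^{\mathbb{C}})_{x_i}/\mathfrak{m}^{-}_{\alpha_i})\}$ is exactly what you defer as the ``technical heart,'' and it is never carried out. Worse, the route you sketch is more delicate than you suggest, because it conflates two different corrections. The degree of the sheaf $PE(\mathfrak{m}^{\mathbb{C}})$ of parabolic sections is governed by the \emph{codimension} of the bounded subspace $\mathfrak{m}^{-}_{\alpha_i}$ at each puncture, not by weight sums of the induced parabolic structure on the adjoint bundle: the eigenvalues of $\mathrm{ad}(\alpha_i)$ on $\mathfrak{m}^{\mathbb{C}}$ come in $\pm$ pairs, negative eigenvalues must be wrapped mod $1$ to land in $[0,1)$, and that wrapping changes the underlying extension across the puncture (a Hecke modification). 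Naive bookkeeping that mixes $\pardeg E(\mathfrak{m}^{\mathbb{C}})=0$ (parabolic self-duality) with weight sums can easily double-count precisely these quotient dimensions. What closes the gap --- and what the paper actually does --- is much simpler and bypasses all eigenbasis bookkeeping: use the short exact sequence that \emph{defines} the sheaf of parabolic sections,
\[
0\to PE\left(\mathfrak{m}^{\mathbb{C}}\right)\to E\left(\mathfrak{m}^{\mathbb{C}}\right)\to \bigoplus_i E\left(\mathfrak{m}^{\mathbb{C}}\right)_{x_i}/\mathfrak{m}^{-}_{\alpha_i}\to 0
\]
(this is (\ref{PEses}), and likewise for $\mathfrak{h}^{\mathbb{C}}$), so that additivity of the Euler characteristic gives $\chi(PE(\mathfrak{m}^{\mathbb{C}}))=\chi(E(\mathfrak{m}^{\mathbb{C}}))-\sum_i \dim(E(\mathfrak{m}^{\mathbb{C}})_{x_i}/\mathfrak{m}^{-}_{\alpha_i})$ at once; then use the Killing form to get \emph{ordinary} (not parabolic) self-duality $E(\mathfrak{m}^{\mathbb{C}})\cong E(\mathfrak{m}^{\mathbb{C}})^{*}$, $E(\mathfrak{h}^{\mathbb{C}})\cong E(\mathfrak{h}^{\mathbb{C}})^{*}$, hence $\deg E(\mathfrak{m}^{\mathbb{C}})=\deg E(\mathfrak{h}^{\mathbb{C}})=0$, and apply Riemann--Roch to the associated bundles themselves. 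The strongly parabolic statement then follows verbatim by replacing $PE(\mathfrak{m}^{\mathbb{C}})$ with $NE(\mathfrak{m}^{\mathbb{C}})$ and $\mathfrak{m}^{-}_{\alpha_i}$ with $\mathfrak{m}^{<0}_{\alpha_i}$, using the sequence (\ref{NEses}), as you correctly anticipate.
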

\noindent Note that the calculation for the expected dimension is based on a Riemann-Roch theorem argument, and thus we always mean here the complex dimension.
\begin{proof}
Let $\left( E,\varphi  \right)$ be any stable parabolic $G$-Higgs bundle. By definition we have
\[\chi(C_P^\bullet(E,\varphi))=\chi\left(PE\left(\mathfrak{h}^{\mathbb{C}}\right)\right)-\chi\left(PE\left(\mathfrak{m}^{\mathbb{C}}\right)\otimes K(D)\right).\]
The short exact sequence \ref{PEses} provides that
\[\chi\left(PE\left(\mathfrak{h}^{\mathbb{C}}\right)\right)=\chi\left(E\left(\mathfrak{h}^{\mathbb{C}}\right)\right)-\sum_{i} \chi\left(E\left(\mathfrak{h}^{\mathbb{C}}\right)_{x_i}/\mathfrak{h}^{-}_{\alpha_i}\right).\]
On the other hand, it is 
\[\chi\left(PE\left(\mathfrak{m}^{\mathbb{C}}\right)\otimes K(D)\right)=\chi\left(E\left(\mathfrak{m}^{\mathbb{C}}\right)\otimes K(D)\right)-\sum_{i} \chi\left(E\left(\mathfrak{h}^{\mathbb{C}}\right)_{x_i}/\mathfrak{m}^-_{\alpha_i}\right).\]
The Killing form now induces the isomorphisms $E\left(\mathfrak{m}^{\mathbb{C}}\right)\cong E\left(\mathfrak{m}^{\mathbb{C}}\right)^*$ and $E\left(\mathfrak{h}^{\mathbb{C}}\right)\cong E\left(\mathfrak{h}^{\mathbb{C}}\right)^*$, and hence 
$\deg E\left(\mathfrak{m}^{\mathbb{C}}\right)=\deg E\left(\mathfrak{h}^{\mathbb{C}}\right)=0$. Also, since $E\left(\mathfrak{m}^{\mathbb{C}}\right)$ and $E\left(\mathfrak{h}^{\mathbb{C}}\right)$ are vector bundles, one has by Riemann-Roch that
\begin{align*}
    \chi\left(PE\left(\mathfrak{h}^{\mathbb{C}}\right)\right)&=\deg\left(E\left(\mathfrak{h}^{\mathbb{C}}\right)\right)+\mathrm{rk}\left(E\left(\mathfrak{h}^{\mathbb{C}}\right)\right)(1-g)-\sum_{i} \chi\left(E\left(\mathfrak{h}^{\mathbb{C}}\right)_{x_i}/\mathfrak{h}^-_{\alpha_i}\right) \\
    &=\mathrm{rk}\left(E\left(\mathfrak{h}^{\mathbb{C}}\right)\right)(1-g)-\sum_{i} \dim\left(E\left(\mathfrak{h}^{\mathbb{C}}\right)_{x_i}/\mathfrak{h}^-_{\alpha_i}\right), \end{align*}
    as well as
    \begin{align*}
    &\chi\left(PE\left(\mathfrak{m}^{\mathbb{C}}\right)\otimes K(D)\right)\\&= \deg\left(E\left(\mathfrak{m}^{\mathbb{C}}\right)\otimes K(D)\right)+ \mathrm{rk}\left(E\left(\mathfrak{m}^{\mathbb{C}}\right)\right)(1-g) -\sum_{i} \chi\left(E\left(\mathfrak{m}^{\mathbb{C}}\right)_{x_i}/\mathfrak{m}^-_{\alpha_i}\right) 
    \\&= \mathrm{rk}\left(E\left(\mathfrak{m}^{\mathbb{C}}\right)\right)(2g-2+s)+ \mathrm{rk}\left(E\left(\mathfrak{m}^{\mathbb{C}}\right)\right)(1-g) -\sum_{i} \dim\left(E\left(\mathfrak{m}^{\mathbb{C}}\right)_{x_i}/\mathfrak{m}^-_{\alpha_i}\right) 
    \\&= \mathrm{rk}\left(E\left(\mathfrak{m}^{\mathbb{C}}\right)\right)(g-1) +  \mathrm{rk}\left(E\left(\mathfrak{m}^{\mathbb{C}}\right)\right)\cdot s - \sum_{i} \dim\left(E\left(\mathfrak{m}^{\mathbb{C}}\right)_{x_i}/\mathfrak{m}^-_{\alpha_i}\right).
\end{align*}
In conclusion we get 
\begin{align}
    -\chi(C_P^\bullet(E,\varphi)) &= (g-1)\dim G^{\mathbb{C}}+ s\cdot \mathrm{rk}\left(E\left(\mathfrak{m}^{\mathbb{C}}\right)\right) \nonumber \\ 
    & + \sum_{i} \left\{\dim\left(E\left(\mathfrak{h}^{\mathbb{C}}\right)_{x_i}/\mathfrak{h}^-_{\alpha_i}\right) - \dim\left(E\left(\mathfrak{m}^{\mathbb{C}}\right)_{x_i}/\mathfrak{m}^-_{\alpha_i}\right) \right\}.
\end{align}
    Similarly, for strongly parabolic Higgs bundles we only need to change from $PE\left(\mathfrak{m}^\mathbb{C}\right)$ to $NE\left(\mathfrak{m}^\mathbb{C}\right)$. Thus, we get
\begin{align}
    -\chi(C_N^\bullet(E,\varphi)) =& (g-1)\dim G^{\mathbb{C}}+ s\cdot \mathrm{rk}\left(E\left(\mathfrak{m}^{\mathbb{C}}\right)\right) \nonumber \\+ & \sum_{i} \left\{\dim\left(E\left(\mathfrak{h}^{\mathbb{C}}\right)_{x_i}/\mathfrak{h}^{-}_{\alpha_i}\right) - \dim\left(E\left(\mathfrak{m}^{\mathbb{C}}\right)_{x_i}/\mathfrak{m}^{<0}_{\alpha_i}\right)  \right\}.
\end{align}
The proof of the proposition is complete.
\end{proof}
In the case when $G$ is a complex Lie group, $\mathfrak{m}\cong \mathfrak{h}$, as so the last term in the summation for the parabolic deformation complex cancels out. We thus obtain the following:
\begin{cor}\label{311}
For a complex semisimple Lie group $G$, the moduli space of parabolic $G$-Higgs bundles is a smooth complex variety with expected dimension
\[2(g-1)\dim_{\mathbb{C}} G + s\cdot \mathrm{rk}\left(E\left(\mathfrak{m}^{\mathbb{C}}\right)\right).\]
\end{cor}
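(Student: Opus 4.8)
The plan is to specialize the expected-dimension formula of Proposition \ref{306} for $-\chi(C_P^\bullet(E,\varphi))$ to the case of a complex semisimple $G$, and to show that the correction sum over the punctures drops out entirely. First I would recall, as in the proof of Proposition \ref{303}, that when $G$ is complex its Cartan decomposition takes the form $\mathfrak{g}=\mathfrak{u}\oplus i\mathfrak{u}$ with $\mathfrak{u}=\mathrm{Lie}(U)$ for $U\subset G$ a maximal compact subgroup; thus $\mathfrak{h}=\mathfrak{u}$ and $\mathfrak{m}=i\mathfrak{u}$, and multiplication by $i$ furnishes an explicit $\mathbb{R}$-linear isomorphism $\iota\colon\mathfrak{h}\xrightarrow{\sim}\mathfrak{m}$, $x\mapsto ix$. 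Complexifying yields $\mathfrak{h}^{\mathbb{C}}\cong\mathfrak{m}^{\mathbb{C}}$, which is the content of the remark ``$\mathfrak{m}\cong\mathfrak{h}$'' preceding the corollary.

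The key step is that this isomorphism is equivariant for all the structure entering the puncture terms. Since $G$ is complex, the adjoint action $\mathrm{Ad}(h)$ is $\mathbb{C}$-linear for every $h\in H$, so $\iota$ intertwines the isotropy representations on $\mathfrak{h}^{\mathbb{C}}$ and $\mathfrak{m}^{\mathbb{C}}$; consequently the associated bundles satisfy $E(\mathfrak{h}^{\mathbb{C}})\cong E(\mathfrak{m}^{\mathbb{C}})$ as parabolic bundles. In particular, for each weight $\alpha_i$ the subgroup $P_{\alpha_i}$ acts compatibly on the two sides, so that $\iota$ carries the bounded subspace $\mathfrak{h}^{-}_{\alpha_i}$ onto $\mathfrak{m}^{-}_{\alpha_i}$ in the sense of Definition \ref{bs-}. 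Hence
\[
\dim\left(E(\mathfrak{h}^{\mathbb{C}})_{x_i}/\mathfrak{h}^{-}_{\alpha_i}\right)=\dim\left(E(\mathfrak{m}^{\mathbb{C}})_{x_i}/\mathfrak{m}^{-}_{\alpha_i}\right)
\]
for every $i$, and every summand in the correction term of Proposition \ref{306} vanishes.

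What remains is $-\chi(C_P^\bullet(E,\varphi))=(g-1)\dim G^{\mathbb{C}}+s\cdot\mathrm{rk}\left(E(\mathfrak{m}^{\mathbb{C}})\right)$. Here I would record the dimension convention: for a complex group $G$ the symbol $G^{\mathbb{C}}$ denotes the complexification of $G$ regarded as a \emph{real} group, so that the complex dimension $\dim G^{\mathbb{C}}$ equals the real dimension $\dim_{\mathbb{R}} G=2\dim_{\mathbb{C}} G$. This turns the first term into $2(g-1)\dim_{\mathbb{C}} G$ and gives the stated formula. Smoothness then follows from the vanishing $\mathbb{H}^0(C_P^\bullet)=\mathbb{H}^2(C_P^\bullet)=0$ at stable simple points, established in the discussion just before Proposition \ref{306} (applicable since a complex semisimple $G$ is in particular semisimple), so the moduli space is smooth of dimension $\dim\mathbb{H}^1=-\chi(C_P^\bullet)$.

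The only genuine point to verify is the equivariance claim of the middle paragraph, namely that $x\mapsto ix$ respects the weight gradings induced by each $\alpha_i$ and hence matches $\mathfrak{h}^{-}_{\alpha_i}$ with $\mathfrak{m}^{-}_{\alpha_i}$. This is where I expect the main (though modest) care is needed: one must check that the real parabolic weights $\alpha_i$, acting through the $\mathbb{C}$-linear adjoint representation, produce identical eigenvalue decompositions on $\mathfrak{h}$ and on $\mathfrak{m}=i\mathfrak{h}$, so that the ``bounded'' parts correspond. Everything else is bookkeeping inherited directly from Proposition \ref{306}.
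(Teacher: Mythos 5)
Your proposal is correct and follows essentially the same route as the paper: the paper's own justification is exactly the specialization of Proposition \ref{306} together with the observation that $\mathfrak{m}\cong\mathfrak{h}$ for complex $G$, so that the puncture correction terms cancel. Your elaborations — the $H$-equivariance of multiplication by $i$ identifying $\mathfrak{h}^{-}_{\alpha_i}$ with $\mathfrak{m}^{-}_{\alpha_i}$, the convention $\dim G^{\mathbb{C}}=\dim_{\mathbb{R}}G=2\dim_{\mathbb{C}}G$, and the smoothness from $\mathbb{H}^0=\mathbb{H}^2=0$ — only make explicit what the paper leaves implicit.
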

\noindent Notice that this calculation will give us real dimension
\[4(g-1)\dim_{\mathbb{C}} G + 2s\cdot \mathrm{rk}\left(E\left(\mathfrak{m}^{\mathbb{C}}\right)\right)=2(g-1)\dim_{\mathbb{\mathbb{R}}} G + 2s\cdot \mathrm{rk}\left(E\left(\mathfrak{m}^{\mathbb{C}}\right)\right).\]

The case for strongly parabolic will depend on the choice of parabolic structure. We can deduce part of the classical result for $G=\mathrm{GL}(n,\mathbb{C})$ in the following example:
\begin{exmp}
Let $G=\mathrm{GL}(n,\mathbb{C})$. As in previous calculation, we compute the Euler characteristic
\[\chi(C_P^\bullet(E,\varphi)) =-n^2(2g-2+s).\]
This does not provide directly the dimension of the moduli space, since when $G=\mathrm{GL}(n,\mathbb{C})$, we will have nonzero $\mathbb{H}^0(C_P^\bullet(E,\varphi))$, as $G$ is not semisimple. Indeed, $\dim \mathbb{H}^0(C_P^\bullet(E,\varphi))=1$, because there is an automorphism given by the identity. We would expect $\mathbb{H}^2(C_P^\bullet(E,\varphi))=0$ for a smooth point in the moduli space, then the expected dimension is
\[\dim \mathcal{M}_{par}^\alpha(\mathrm{GL}(n,\mathbb{C})) = n^2(2g-2+s)+1.\]
This calculation coincides with formula \ref{paradim}, which is also the result of Theorem 5.2 in \cite{Yoko2}.\\
In the case of strongly parabolic Higgs bundles, for a flag with grading such that $k_j(x_i)=\dim E_{x_i,j}-\dim E_{x_i,j+1}$ as in Example \ref{Gln}, we get
\[\dim\left(E\left(\mathfrak{h}^{\mathbb{C}}\right)_{x_i}/\mathfrak{h}^{-}_{\alpha_i}\right) - \dim\left(E\left(\mathfrak{m}^{\mathbb{C}}\right)_{x_i}/\mathfrak{m}^{<0}_{\alpha_i}\right) =-\sum_{j}k_j(x_i)^2.\]
Therefore, the Euler characteristic for the strongly parabolic Higgs bundle deformation complex will be
\[-\chi(C_N^\bullet(E,\varphi)) =n^2(2g-2)+\sum_{i} \left(n^2-\sum_{j} k_{j}(x_i)^2 \right).\]
In this situation, we have the Serre duality as in Corollary \ref{303}, which provides that
\[\dim \mathbb{H}^0(C_N^\bullet(E,\varphi))=\dim \mathbb{H}^2(C_N^\bullet(E,\varphi))=1.\]
Thus, one has
\[\dim {\mathsf{\mathcal{M}}}^{\alpha}_{spar}(\mathrm{GL}(n,\mathbb{C})) =n^2(2g-2)+\sum_{i} \left(n^2-\sum_{j} k_{j}(x_i)^2 \right) + 2.\]
In the case of a generic flag, which means that $k_{j}(x_i)=1$ for all possible $i,j$, the expected dimension is
\[\dim {\mathsf{\mathcal{M}}}^{\alpha}_{spar}(\mathrm{GL}(n,\mathbb{C})) =n^2(2g-2)+s \cdot n(n-1)+2.\]
This is in accordance with formula \ref{sparadim}, elaborated also in Proposition 2.4 in \cite{GaGoMu}. In a more general situation where we do not assume semisimplicity, we will be able to compute the dimension of $\mathbb{H}^0$ from the dimension of the center of $H^{\mathbb{C}}$ and derive a formula similar to Theorem II of \cite{Bhosle}, but we will not be discussing this here.

\end{exmp}

\begin{rem}\label{307}
Notice that when the number of punctures $s$ is zero, this dimension count coincides with the dimension count in Proposition 3.19 of \cite{GaGoRi} in the non-parabolic case as there is no contribution from parabolic points.
\end{rem}

\section{Parabolic Teichm\"{u}ller components}

In his seminal article \cite{Hit92}, N. Hitchin demonstrated the existence of topologically trivial connected components, which he then called \emph{Teichm\"{u}ller components}, in the moduli space $\mathrm{Ho}{{\mathrm{m}}^{+}}\left( {{\pi }_{1}}\left( \Sigma  \right),G \right)$ of reductive fundamental group representations into the adjoint group $G$ of the split real form of a complex simple Lie group ${{G}^{\mathbb{C}}}$, for a compact oriented surface $\Sigma $ of genus $g\ge 2$. Recall that the split real forms of the classical groups are the groups $\mathrm{SL}\left( n,\mathbb{R} \right)$, $\mathrm{SO}\left( n+1,n \right)$, $\mathrm{Sp}\left( 2n,\mathbb{R} \right)$ and $\mathrm{SO}\left( n,n \right)$. These components, in reality Euclidean spaces of dimension $2\left( g-1 \right){{\dim}_{\mathbb{R}}}G$, from the point of view of stable $G$-Higgs bundles are parameterized by fixed square roots of the canonical line bundle over the Riemann surface, for a choice of complex structure on $\Sigma $.

Later on, in \cite{Biswas3} the authors have extended N. Hitchin's results for a Riemann surface with $s$-many punctures and the group $G=\mathrm{SL}\left( k,\mathbb{R} \right)$. In particular, for a compact Riemann surface $X$ of genus $g$ and a divisor of $s$-many distinct points $D=\left\{ {{x}_{1}},\ldots ,{{x}_{s}} \right\}$ such that $2g-2+s>0$, they showed that Fuchsian representations of ${{\pi }_{1}}\left( X\backslash D \right)$ into $\mathrm{PSL}\left( 2,\mathbb{R} \right)$ are in one-to-one correspondence with parabolic $\mathrm{SL}\left( 2,\mathbb{R} \right)$-Higgs bundles of the form $\left( E,\theta  \right)$, satisfying the following:
\begin{enumerate}
  \item $E:={{\left( L\otimes \xi  \right)}^{*}}\oplus L$, \\
  where $L$ is a line bundle with ${{L}^{2}}={{K}_{X}}$ and $\xi ={{\mathsf{\mathcal{O}}}_{X}}\left( D \right)$ is the line bundle over the divisor $D$; the bundle $E$ is equipped with a parabolic structure given by a trivial flag ${{E}_{{{x}_{i}}}}\supset \left\{ 0 \right\}$ and weight $\frac{1}{2}$ for every $1\le i\le s$.
  \item $\theta :=\left( \begin{matrix}
   0 & 1  \\
   a & 0  \\
\end{matrix} \right)\in {{H}^{0}}\left( X,\mathrm{End}\left( E \right)\otimes K\left( D \right) \right)$,\\
for a meromorphic quadratic differential $a\in {{H}^{0}}\left( X,{{K}^{2}}\left( D \right) \right)$.
\end{enumerate}
Considering the $\left( k-1 \right)$-symmetric product of the parabolic vector bundle $E$, an extension of this result was provided also in \cite{Biswas3} for representations into $\mathrm{PSL}\left( k,\mathbb{R} \right)$, for $k>2$. Fuchsian representations of ${{\pi }_{1}}\left( X\backslash D \right)$ into $\mathrm{PSL}\left( k,\mathbb{R} \right)$ correspond to parabolic $\mathrm{SL}\left( k,\mathbb{R} \right)$-Higgs bundles $\left( {{W}_{k}},\theta \left( {{a}_{2}},\ldots ,{{a}_{k}} \right) \right)$, satisfying the following:
\begin{enumerate}
  \item ${{W}_{k}}={{S}^{k-1}}\left( E \right)\otimes {{\xi }^{m\left( k \right)}}$, where $m\left( k \right)=\left\{ \begin{matrix}
   \frac{k}{2}-1,\,\,\,k:\mathrm{even}  \\
   \frac{k-1}{2},\,\,\,k:\mathrm{odd}  \\
\end{matrix} \right.$, equipped with the trivial flag ${{\left( {{W}_{k}} \right)}_{{{x}_{i}}}}\supset \left\{ 0 \right\}$ with weight $\beta =\left\{ \begin{matrix}
   \frac{1}{2},\mathrm{ for }k\mathrm{ even}  \\
   0,\mathrm{ for }k\mathrm{ odd }  \\
\end{matrix} \right.$, for every $1\le i\le s$.
  \item $\theta \left( {{a}_{2}},\ldots {{a}_{k}} \right)=\left( \begin{matrix}
   0 & 1 & {} & 0  \\
   0 & 0 & \ddots  & 0  \\
   \vdots  & {} & \ddots  & 1  \\
   {{a}_{k}} & \cdots  & {{a}_{2}} & 0  \\
\end{matrix} \right)$, for merom. differentials ${{a}_{j}}\in {{H}^{0}}\left( X ,{{K}^{j}}\otimes {{\xi }^{j-1}} \right)$.
\end{enumerate}

Lastly, it was shown in \cite{Biswas3} that there exists a connected component of real dimension  $2\left( g-1 \right)\left( {{k}^{2}}-1 \right)+s\left( {{k}^{2}}-k \right)$ in the moduli space of representations of ${{\pi }_{1}}\left( X\backslash D \right)$ into $\mathrm{SL}\left( k,\mathbb{R} \right)$ with fixed conjugacy class of monodromy around the punctures. In the sequel, we extend these results for general split real $G$.

Using an irreducible representation $\phi :\mathrm{SL(2}\mathrm{,}\mathbb{R}\mathrm{)}\to G$ for a split real group $G$, which sends copies of a maximal compact subgroup of  $\mathrm{SL(2}\mathrm{,}\mathbb{R}\mathrm{)}$ into copies of a maximal compact subgroup of $G$, one can provide the existence of a parabolic Teichm\"{u}ller component similarly to the classical method by N. Hitchin. This was discussed in \S8 of \cite{BiGaRi} . In particular, the representation $\phi$ considered induces a decomposition ${{\mathfrak{g}}^{\mathbb{C}}}=\underset{i=1}{\overset{l}{\mathop{\oplus }}}\,{{V}_{i}}$ into irreducible pieces. For a standard $\mathfrak{sl}_{2}$ basis $\left( H,X,Y \right)$, with $H=\sqrt{-1}{{\mathfrak{u}}_{1}}$, take ${{e}_{1}}=\phi \left( X \right)$ and ${{e}_{-1}}=\phi \left( Y \right)$. There exists a basis ${{p}_{1}},\ldots ,{{p}_{l}}$ of invariant polynomials on ${{\mathfrak{g}}^{\mathbb{C}}}$ of degrees ${{m}_{i}}+1$, where $2{{m}_{i}}+1$ is the dimension of ${{V}_{i}}$, or equivalently ${{m}_{i}}$ is the eigenvalue of $\mathrm{ad}(H)$ on a highest weight vector ${{e}_{i}}\in {{V}_{i}}$, for $1\le i\le l$, with the property that for elements of the form
	\[f={{e}_{-1}}+{{f}_{1}}{{e}_{1}}+\ldots +{{f}_{l}}{{e}_{l}}\]
it is ${{p}_{i}}\left( f \right)={{f}_{i}}$. Analogously to the non-parabolic case of N. Hitchin \cite{Hit92}, one obtains a section $\psi$ of the map
	\[p:{{\mathsf{\mathcal{M}}}_{par}}\left( G \right)\to \underset{i=1}{\overset{l}{\mathop{\oplus }}}\,{{H}^{0}}\left( X,{{K}^{{{m}_{i}}+1}}\otimes {{\xi }^{{{m}_{i}}}} \right)\]
consisted of a family of parabolic $G$-Higgs bundles $\left( \phi \left( E \right),\varphi  \right)$, where:
\begin{enumerate}
  \item $E={{\left( L\otimes \xi  \right)}^{*}}\oplus L$, for ${{L}^{2}}={{K}_{X}}$ and $\xi ={{\mathsf{\mathcal{O}}}_{X}}\left( D \right)$, and $\phi \left( E \right)$ is equipped with the trivial flag ${{\left( \phi \left( E \right) \right)}_{{{x}_{i}}}}\supset \left\{ 0 \right\}$ with weight $\frac{1}{2}$, for every $1\le i\le s$, and
  \item The Higgs field is considered to be	given by\[\varphi ={{e}_{-1}}+{{a}_{1}}{{e}_{1}}+\ldots {{a}_{l}}{{e}_{l}}\]
with ${{a}_{j}}\in {{H}^{0}}\left( X,{{K}^{{{m}_{i}}+1}}\otimes {{\xi }^{{{m}_{i}}}} \right)$, for $1\le j\le l$.
\end{enumerate}
The Higgs field $\varphi $ is meromorphic with simple poles at the points ${{x}_{i}}$ in the divisor $D$ and the residue $\mathrm{Re}{{\mathrm{s}}_{{{x}_{i}}}}\varphi ={{e}_{-1}}$ is nilpotent with $l$-dimensional centralizer, where $l=\mathrm{rk}{{\mathfrak{g}}^{\mathbb{C}}}$. The section $\psi$ thus provides the existence of a \textit{parabolic Teichm\"{u}ller component}.

In fact, these components are parameterized by parabolic square roots of the line bundle $K\left( D \right)$ of degree $2g-2+s$, that is, line bundles ${{L}_{0}} \to X$ with $\deg {{L}_{0}}=g-1$ equipped with a trivial flag ${{\left( {{L}_{0}} \right)}_{{{x}_{i}}}}\supset \left\{ 0 \right\}$ and parabolic weight $\frac{1}{2}$ on each fiber ${{\left( {{L}_{0}} \right)}_{{{x}_{i}}}}$, for $1\le i\le s$. Notice that, for such objects, $\pardeg {{L}_{0}}=g-1+\frac{s}{2}$. In \S 7.2 later on, we show that there are ${{2}^{2g+s-1}}$ many non-isomorphic such parabolic square roots of $K\left( D \right)$, thus there exist ${{2}^{2g+s-1}}$ parabolic Teichm\"{u}ller components of ${{\mathsf{\mathcal{M}}}_{par}}\left( G \right)$ for a split real group $G$. We finally apply the Riemann-Roch formula to compute the dimension of these components:

\begin{thm}\label{401}
Let $X$ be a compact Riemann surface of genus $g$ and let $D=\left\{ {{x}_{1}},\ldots {{x}_{s}} \right\}$ a divisor of $s$-many distinct points on $X$, such that $2g-2+s>0$, that is, the surface $X$ can be equipped with a metric of constant negative curvature (-4). Let $G$ be the adjoint group of the split real form of a complex simple Lie group with Cartan decomposition in the Lie algebra $\mathfrak{g}=\mathfrak{h}\oplus \mathfrak{m}$. The space of homomorphisms from the fundamental group of $X$ into $G$, with fixed conjugacy class of monodromy around the points in $D$, has a component of real dimension $2\left( g-1 \right){{\dim}_{\mathbb{R}}}G+2s\cdot \mathrm{rk}E\left( {{\mathfrak{m}}^{\mathbb{C}}} \right)$.
\end{thm}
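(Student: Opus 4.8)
The plan is to transport the statement to the Higgs bundle side, identify the parabolic Teichm\"uller locus produced by the section $\psi$ as an entire connected component of the moduli space, and then extract its dimension from the deformation-theoretic Riemann--Roch computation of Proposition~\ref{306}.

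First I would use the tame non-abelian Hodge correspondence on $(X,D)$ together with the Hitchin--Kobayashi correspondence of \cite{BiGaRi} to identify the space of homomorphisms $\pi_1(X\setminus D)\to G$ with prescribed monodromy conjugacy class around each $x_i$ with the moduli space $\mathcal{M}_{par}(G)$ of polystable parabolic $G$-Higgs bundles carrying the trivial flag of weight $\tfrac12$ at every puncture, the weights encoding exactly that conjugacy class. Under this identification the section $\psi$ constructed above realizes the family $(\phi(E),\varphi)$, with $\varphi=e_{-1}+a_1 e_1+\dots+a_l e_l$ and $a_i\in H^0(X,K^{m_i+1}\otimes\xi^{m_i})$, as a locus in $\mathcal{M}_{par}(G)$ homeomorphic to a Euclidean space; since $G$ is adjoint and these Higgs bundles are stable and simple, the locus is open and closed, hence a full connected component, and is topologically trivial. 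Consequently its real dimension equals that of $\mathcal{M}_{par}(G)$ at a generic (smooth) point.

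Next I would compute that dimension from Proposition~\ref{306}. For a split real form one has $\dim_{\mathbb R}G=\dim_{\mathbb C}G^{\mathbb C}$, so the bulk term $(g-1)\dim G^{\mathbb C}+s\cdot\mathrm{rk}\,E(\mathfrak{m}^{\mathbb C})$ already carries the shape of the claimed formula; the remaining work is the local correction $\sum_i\{\dim(E(\mathfrak{h}^{\mathbb C})_{x_i}/\mathfrak{h}^-_{\alpha_i})-\dim(E(\mathfrak{m}^{\mathbb C})_{x_i}/\mathfrak{m}^-_{\alpha_i})\}$. The key point is that the grading at each puncture is the one induced by $\mathrm{ad}(H)$ for the principal $\mathfrak{sl}_2$-triple $(H,X,Y)$, together with the single weight $\tfrac12$; I would compute the filtered subspaces $\mathfrak{h}^-_{\alpha_i}$ and $\mathfrak{m}^-_{\alpha_i}$ directly from Definition~\ref{bs-} and check that, for this grading and this weight, their codimensions in $\mathfrak{h}^{\mathbb C}$ and $\mathfrak{m}^{\mathbb C}$ agree puncture by puncture, so that the correction vanishes. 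Granting this, $\dim_{\mathbb C}\mathcal{M}_{par}(G)=(g-1)\dim G^{\mathbb C}+s\cdot\mathrm{rk}\,E(\mathfrak{m}^{\mathbb C})$, and doubling for the underlying real manifold exactly as in the remark following Corollary~\ref{311} gives the asserted $2(g-1)\dim_{\mathbb R}G+2s\cdot\mathrm{rk}\,E(\mathfrak{m}^{\mathbb C})$. As a partial check on the bulk term I would evaluate the Hitchin base by Riemann--Roch: since $\deg(K^{m_i+1}\otimes\xi^{m_i})=(m_i+1)(2g-2)+m_i s>2g-2$ whenever $m_i\ge1$ and $2g-2+s>0$, Serre duality forces $H^1=0$ and one gets $\dim H^0(K^{m_i+1}\otimes\xi^{m_i})=(g-1)(2m_i+1)+m_i s$, whence $\sum_i(2m_i+1)=\dim_{\mathbb C}\mathfrak{g}^{\mathbb C}=\dim_{\mathbb R}G$ reproduces the leading coefficient. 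The hard part will be precisely the local bookkeeping at the punctures---showing that the genuinely parabolic (not merely strongly parabolic) deformations allowed by the weight $\tfrac12$ structure contribute exactly $\mathrm{rk}\,E(\mathfrak{m}^{\mathbb C})$ per puncture, i.e. that the correction term of Proposition~\ref{306} vanishes; once this is in hand the theorem is a formal consequence of Riemann--Roch and the representation theory of the principal $\mathfrak{sl}_2$.
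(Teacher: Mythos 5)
There is a genuine gap, and it sits exactly where you yourself flagged the ``hard part.'' Note first what the paper actually does: its proof never computes the dimension of $\mathcal{M}_{par}(G)$. After invoking Boalch's correspondence \cite{Boalch}, it matches two independently computed numbers: (i) the Riemann--Roch dimension of the Hitchin base $\bigoplus_{i=1}^{l}H^0(X,K^{m_i+1}\otimes\xi^{m_i})$, namely $2(g-1)\dim_{\mathbb{R}}G+2s\sum_i m_i$ (you do this too, but only as a ``check''), and (ii) a direct parameter count on the representation side: the $2g$ handle generators contribute $\dim G$ each, the relation and overall conjugation account for $-2\dim G$, and each puncture generator moves in the conjugacy class of the regular unipotent element $\exp(e_{-1})$, whose centralizer has dimension $l=\mathrm{rk}\,\mathfrak{g}^{\mathbb{C}}$, so each class contributes $\dim_{\mathbb{R}}G-l=2\sum_i m_i$. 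Step (ii) is what ties the Higgs-theoretic number to the space named in the statement (representations with \emph{fixed conjugacy class} of monodromy), and it is entirely absent from your proposal.

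The step you substitute for it fails on both counts. First, the correction term in Proposition \ref{306} does not vanish for the weight-$\frac{1}{2}$ structure carried by the Hitchin family: already for $G=\mathrm{PSL}(2,\mathbb{R})$ the weight $\alpha_i$ acts on $\mathfrak{h}^{\mathbb{C}}$ with eigenvalue $0$ and on $\mathfrak{m}^{\mathbb{C}}$ with eigenvalues $\pm 1$, so $\dim\left(E(\mathfrak{h}^{\mathbb{C}})_{x_i}/\mathfrak{h}^-_{\alpha_i}\right)-\dim\left(E(\mathfrak{m}^{\mathbb{C}})_{x_i}/\mathfrak{m}^-_{\alpha_i}\right)=0-1=-1$ at each puncture. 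Were the correction zero, your route would give real dimension $6(g-1)+4s$, contradicting the value $6(g-1)+2s$ of \cite{Biswas3} and of the paper's own count. The mismatch is structural, not accidental: the base has $s$-coefficient $2\sum_i m_i$, whereas ``bundle rank per puncture'' is $2\dim_{\mathbb{C}}\mathfrak{m}^{\mathbb{C}}=2\left(\sum_i m_i+l\right)$, so your deformation-theoretic computation and your Riemann--Roch check can never agree (your check only confirms the $(g-1)$-coefficient, which is why you did not see the clash). Second, and more fundamentally, the theorem's ambient space is not $\mathcal{M}_{par}(G)$: fixing the parabolic weights fixes only part of the local monodromy, while the theorem fixes its full conjugacy class, which on the Higgs side pins the residue to the regular nilpotent $e_{-1}$ as along the Hitchin section. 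Inside $\mathcal{M}_{par}^{\alpha}(G)$ the residue varies, and the section image can have strictly positive codimension there; e.g. for $\mathrm{PSL}(3,\mathbb{R})$ (weight $0$, by \cite{Biswas3}) the section image has complex dimension $8(g-1)+3s$ inside a moduli space of expected dimension $8(g-1)+5s$. So the image is in general not open in $\mathcal{M}_{par}^{\alpha}(G)$, the invariance-of-domain openness you invoke is unavailable, and ``its dimension equals that of $\mathcal{M}_{par}(G)$'' is false. To repair the argument you must either reinstate the paper's group-theoretic count of the regular unipotent conjugacy class, or develop a relative deformation theory in which the residue class at each puncture is held fixed; Proposition \ref{306} as stated cannot do that job.
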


\begin{proof}
In \cite{Boalch}, P. Boalch established a correspondence between parabolic connections and $G$-filtered fundamental group representations, thus extending the non-abelian Hodge correspondence over non-compact curves of C. Simpson \cite{Simp} for $\mathrm{GL}(n,\mathbb{C})$. As in N. Hitchin's classical approach, this correspondence identifies the subfamily defined by the parabolic Hitchin section $\psi$ with the moduli space of completely reducible flat $G$-connections on $X\backslash D$, meromorphic at ${{x}_{i}}\in D$ and whose holonomy is $G$-conjugated to an element $U\in H$, where $H\subset G$ is a maximal compact subgroup of $G$.

\noindent From the Riemann-Roch formula, one obtains that the real dimension of the vector space $\underset{i=1}{\overset{l}{\mathop{\oplus }}}\,{{H}^{0}}\left( X,{{K}^{{{m}_{i}}+1}}\otimes {{\xi }^{{{m}_{i}}}} \right)$ is equal to
	\[2\left[ \sum\limits_{i=1}^{l}{\left( 2{{m}_{i}}+1 \right)\left( g-1 \right)+{{m}_{i}}s} \right]=2\left( g-1 \right){{\dim}_{\mathbb{R}}}G+2s\sum\limits_{i=1}^{l}{{{m}_{i}}}.\]
For the family of parabolic Higgs bundles we have considered, the residue of the Higgs field $\mathrm{Re}{{\mathrm{s}}_{{{x}_{i}}}}\varphi ={{e}_{-1}}$ is regular, nilpotent.

\noindent On the other hand, ${{\pi }_{1}}\left( X \right)=\left\langle {{c}_{1}},{{d}_{1}},\ldots ,{{c}_{g}},{{d}_{g}},{{e}_{1}},\ldots ,{{e}_{s}}\left| \prod\limits_{j=1}^{g}{\left[ {{c}_{j}},{{d}_{j}} \right]\prod\limits_{j=1}^{s}{{{e}_{j}}=id}} \right. \right\rangle $, where ${{c}_{j}},{{d}_{j}}$ are simple loops around the handles of $X$ and ${{e}_{j}}$ are simple loops around the points ${{x}_{i}}$ in the divisor. The image of the elements ${{c}_{j}},{{d}_{j}}$ via a representation $\rho :{{\pi }_{1}}\left( X \right)\to G$ depends on $\dim G$ different parameters. Moreover, for each loop ${{e}_{j}}$, the relations for weights and monodromies in Table 1 of \cite{BiGaRi} provide that the image of ${{e}_{j}}$ via a representation $\rho :{{\pi }_{1}}\left( X\backslash D \right)\to G$ is a regular unipotent element $E_j$. Let $U_j$ be the set of all conjugacy classes of $E_j$. To calculate the number of parameters for the image of $e_j$ is equivalent to calculating the number of parameters for the set $U_j$. Clearly, any element in $U_j$ can be written as $A E_j A^{-1}$, where $A\in {G}/{I}\;$, and where $I$ is the centralizer of the unipotent and regular element ${{E}_{j}}$. This means that $\dim I=l$, where $l=\mathrm{rk}{{\mathfrak{g}}^{\mathbb{C}}}$, thus the total number of parameters for the image $\rho \left( \left[ {{e}_{j}} \right] \right)$ is ${{\dim}_{\mathbb{C}}}{{\mathfrak{g}}^{\mathbb{C}}}-l=\sum\limits_{i=1}^{l}{2{{m}_{i}}}$. We deduce that the real dimension of the space of fundamental group representations into $G$ with the monodromy around the points in $D$ lying in the conjugacy class of an element in $H$, is equal to $2{{\dim}_{\mathbb{R}}}G\left( g-1 \right)+2s\sum\limits_{i=1}^{l}{{{m}_{i}}}$. This coincides with the dimension count for the vector space $\underset{i=1}{\overset{l}{\mathop{\oplus }}}\,{{H}^{0}}\left( X,{{K}^{{{m}_{i}}+1}}\otimes {{\xi }^{{{m}_{i}}}} \right)$, which can be written as
	\[2{{\dim}_{\mathbb{R}}}G\left( g-1 \right)+2s\cdot \mathrm{rk}E\left( {{\mathfrak{m}}^{\mathbb{C}}} \right)\]
since for the weights in the family of Higgs bundles in the Hitchin section, it holds in particular that ${{\left( E\left( {{\mathfrak{m}}^{\mathbb{C}}} \right) \right)}_{{{x}_{i}}}}\simeq {{\mathfrak{m}}^{\mathbb{C}}}$.
\end{proof}
\begin{rem}Note that the calculation here coincides with the calculation of expected real dimension of the moduli space from Corollary \ref{311}. Moreover, in the absence of punctures, the dimension of a Teichm\"{u}ller component coincides with the one from \cite{Hit92}.
\end{rem}

\section{Maximal Parabolic components}

Distinguished components of the moduli space $\mathcal{M}_{par}^\alpha(G)$ also exist when the homogeneous space  ${G}/{H}\;$ is a Hermitian symmetric space of noncompact type, where $H\subset G$ is a maximal compact subgroup. For the classical groups, this means considering the Lie groups  $\mathrm{SU}(p,q)$, $\mathrm{Sp}(2n,\mathbb{R})$, $\mathrm{S}{{\mathrm{O}}^{*}}(2n)$ and $\mathrm{S}{{\mathrm{O}}_{0}}(2,n)$. In this case,  $\mathfrak{h}=\mathrm{Lie}\left( H \right)$ has a 1-dimensional center and there is a decomposition of ${{\mathfrak{m}}^{\mathbb{C}}}$ into its $\pm i$-eigenspaces ${{\mathfrak{m}}^{\mathbb{C}}}={{\mathfrak{m}}^{+}}\oplus {{\mathfrak{m}}^{-}}$. For a parabolic $G$-Higgs bundle $\left( E,\varphi  \right)$ with the Higgs field $\varphi$ decomposing accordingly as $\varphi ={{\varphi }^{+}}+{{\varphi }^{-}}$, the authors in \cite{BiGaRi} define a Toledo invariant $\tau \left( E \right)$ analogously to the non-parabolic case and provide a general inequality of Milnor-Wood type.

\begin{prop}[O. Biquard, O. Garc\'{i}a-Prada, I. Mundet i Riera \cite{BiGaRi}]\label{501}
For a semistable parabolic $G$-Higgs bundle $\left( E,\varphi  \right)$ on a Riemann surface with a divisor $\left( X,D \right)$, it holds that
	\[-\mathrm{rk}\left( {{\varphi }^{+}} \right)\left( 2g-2+s \right)\le \tau \left( E \right)\le \mathrm{rk}\left( {{\varphi }^{-}} \right)\left( 2g-2+s \right).\]
\end{prop}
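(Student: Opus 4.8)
The plan is to prove the two inequalities by a symmetric argument, so I would focus on the upper bound $\tau(E) \le \mathrm{rk}(\varphi^-)(2g-2+s)$; the lower bound then follows by interchanging the roles of $\mathfrak{m}^+$ and $\mathfrak{m}^-$, i.e.\ replacing $\varphi^-$ by $\varphi^+$, using the symmetry of the Hermitian structure $\mathfrak{m}^{\mathbb{C}}=\mathfrak{m}^+\oplus\mathfrak{m}^-$. Throughout I would use that $K(D)$ carries no parabolic weights (Definition \ref{205}), so that $\pardeg K(D)=\deg K(D)=2g-2+s$; this is exactly where the factor $(2g-2+s)$ enters.

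First I would realize $\varphi^-$ as a parabolic bundle map via the isotropy representation on $\mathfrak{m}^-$ (in the $\mathrm{Sp}(2n,\mathbb{R})$ model this is $\gamma:V\to V^*\otimes K(D)$), and set $r=\mathrm{rk}(\varphi^-)$ to be its generic rank. The key first step is to form the saturation $N=\ker(\varphi^-)$, a parabolic subbundle, and to observe that it determines a $\varphi$-invariant isotropic reduction of the underlying $G$-structure: since $\varphi^-$ annihilates $N$ and the complementary Higgs component sends $N$ into the part paired trivially with $N$, the induced filtration is preserved by the full Higgs field. Applying the semistability of $(E,\varphi)$ (Definition \ref{207}) to this isotropic $\varphi$-invariant subobject then forces $\pardeg N\le 0$, since the orthogonal complement carries the opposite parabolic degree and an isotropic destabilizing piece must be nonpositive.

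Next I would analyze the rank-$r$ quotient $Q$ on which $\varphi^-$ becomes nondegenerate. Because $\varphi^-$ is symmetric (self-dual under the Killing identification $E(\mathfrak{m}^-)\cong E(\mathfrak{m}^+)^*$), its image lies in the annihilator of $N$, so it descends to a generically nondegenerate symmetric parabolic morphism $\bar\varphi^-:Q\to Q^*\otimes K(D)$. Taking determinants yields a nonzero parabolic section of $(\det Q)^{-2}\otimes K(D)^{\otimes r}$, and since a nonzero section of a parabolic line bundle has nonnegative parabolic degree, this gives $\pardeg Q\le \tfrac{r}{2}\,\pardeg K(D)=\tfrac{r}{2}(2g-2+s)$. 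Adding the two contributions, $\pardeg N+\pardeg Q\le \tfrac{r}{2}(2g-2+s)$, and identifying this sum with a fixed positive multiple of the Toledo invariant $\tau(E)$ (the normalization $\tau=2\pardeg V$ in the symplectic model), produces $\tau(E)\le \mathrm{rk}(\varphi^-)(2g-2+s)$ as desired.

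The main obstacle I anticipate is the parabolic bookkeeping in the two degree inequalities: one must verify that $N$ is genuinely a parabolic subbundle whose induced weights make it a legitimate test object for Definition \ref{207}, and that the determinant section $\det\bar\varphi^-$ is a \emph{parabolic} (not merely holomorphic) morphism, so that the passage ``nonzero section $\Rightarrow$ nonnegative parabolic degree'' holds with the induced weights at the punctures in $D$. This is precisely the point where the argument departs from the non-parabolic case of \cite{BrGPGHermitian}: controlling the residues of the meromorphic Higgs field along $D$ together with the induced parabolic weights on $N$, $Q$ and $\det Q$ is what upgrades the classical Milnor--Wood inequality to its parabolic-degree form, and this is carried out for general Hermitian $G$ in \cite{BiGaRi}.
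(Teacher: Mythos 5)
First, a point of comparison: the paper does not actually prove Proposition \ref{501} — it is quoted from \cite{BiGaRi} — so the only in-paper argument to measure your proposal against is the proof of the specialized statement, Proposition \ref{504}, for $G=\mathrm{Sp}(2n,\mathbb{R})$. Against that proof, your strategy is correct in its essentials but genuinely different in one key step. The paper works with the exact sequence $0\to N\to V\to I\otimes K\otimes\xi\to 0$, where $N=\ker\gamma$ and $I=\operatorname{Im}\gamma\otimes\left(K\otimes\xi\right)^{-1}\subset V^{\vee}$, and applies semistability \emph{twice}: to the $\Phi$-invariant subbundle $N$ (giving $\pardeg N\le 0$) and to $V\oplus\tilde{I}$ (giving $\pardeg V+\pardeg I\le 0$); combining the two yields $2\pardeg V\le\mathrm{rk}(\gamma)(2g-2+s)$. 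You apply semistability only once (to $N$, exactly as the paper does) and replace the second application by a stability-free argument: the symmetry of $\gamma$ makes the induced map $\bar\gamma\colon Q\to Q^{*}\otimes K(D)$ on $Q=V/N$ a generically nondegenerate parabolic morphism, whose determinant bounds $2\pardeg Q\le\mathrm{rk}(\gamma)(2g-2+s)$ outright. Both routes give the same inequality, and your normalization $\tau=2\pardeg V$ in the symplectic model is precisely what reconciles the constant in Proposition \ref{501} with that in Proposition \ref{504}, so the numbers check out. The parabolic bookkeeping you flag (that $\bar\gamma$ and $\det\bar\gamma$ are parabolic morphisms for the induced sub/quotient/determinant structures, so that a nonzero parabolic map of parabolic line bundles forces the parabolic degree inequality) is real but routine.

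The genuine limitation is generality. Proposition \ref{501} is stated for an arbitrary Hermitian $G$, and your pivotal step — descending $\varphi^{-}$ to a \emph{symmetric self-pairing} $Q\to Q^{*}\otimes K(D)$ and taking its determinant — uses that $\varphi^{-}$ is a $K(D)$-twisted quadratic form on a single bundle $V$. That is special to $\mathrm{Sp}(2n,\mathbb{R})$ (and to the skew/orthogonal cases $\mathrm{SO}^{*}(2n)$, $\mathrm{SO}_{0}(2,n)$). For $G=\mathrm{SU}(p,q)$, where $\gamma\colon V\to W\otimes K(D)$ pairs two different bundles, $Q=V/\ker\gamma$ carries no induced self-pairing and the determinant step fails verbatim; there one needs precisely the paper's second semistability application (pairing $V$ against the saturated image $\tilde{I}\subset W$ inside $E$), which works uniformly across the classical Hermitian groups. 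So your argument is a valid, and arguably cleaner, proof of the case the paper itself proves, but it does not establish Proposition \ref{501} in the stated generality — for that, the (quite different, intrinsic) proof in \cite{BiGaRi}, or at least a case-by-case adaptation along the paper's lines, is still required.
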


In the sequel of this section, we study explicitly the case when $G=\mathrm{Sp}(2n,\mathbb{R})$, while some further details are moved to the Appendix. Then, in \S 6 and \S 7 we describe topological invariants for maximal parabolic $G=\mathrm{Sp}(2n,\mathbb{R})$-Higgs bundles. The analysis for the case $G=\mathrm{Sp}(2n,\mathbb{R})$ can be then readily adapted for the study of maximal parabolic $G$-Higgs bundles also for the other Hermitian symmetric spaces ${G}/{H}$.

\subsection{Maximal Parabolic $\mathrm{Sp}\left( 2n,\mathbb{R}\right )$-Higgs bundles.} A maximal compact subgroup of $G=\mathrm{Sp}\left( 2n,\mathbb{R}\right )$ is $H=\mathrm{U}\left( n \right)$ and ${{H}^{\mathbb{C}}}=\mathrm{GL}\left( n,\mathbb{C} \right)$, thus the parabolic structure on a $\mathrm{GL}\left( n,\mathbb{C} \right)$-principal bundle is in this case defined by a weighted filtration. We will first fix some notation before giving the precise definitions.

Let $X$ be a compact Riemann surface of genus $g$ and let the divisor $D:=\left\{ {{x}_{1}},\ldots ,{{x}_{s}} \right\}$ of $s$-many distinct points on $X$, assuming that $2g-2+s>0$. Let $K$ denote, as usual, the canonical line bundle over $X$ of degree $2g-2$, and $\xi :={{\mathsf{\mathcal{O}}}_{{X}}}\left( D \right)$ the line bundle on $X$ given by the divisor $D$. The degree of the line bundle $ K\otimes \xi$ is $2g-2+s$, where $s$ is the number of points in the divisor considered.

Let $V$ be a rank $n$ holomorphic bundle over $X$. Equip this with a parabolic structure given by a weighted flag
on each fiber ${{V}_{x_{i}}}$:
\begin{equation}\tag{2}
\begin{matrix}
   {{V}_{x_{i}}}\supset {{V}_{x_{i},2}}\supset \ldots \supset {{V}_{x_{i},n+1}}=\left\{ 0 \right\}  \\
   0\le {{\alpha }_{1}}\left( x_{i} \right)\le \ldots \le  {{\alpha }_{n}}\left( {{x}_{i}} \right)<1  \\
\end{matrix}
\end{equation}
for each ${{x}_{i}}\in D$. The parabolic degree of the parabolic bundle $\left( V,\alpha  \right)$ is given by the rational number
	\[\pardeg V=\deg V+\sum\limits_{{{x}_{i}}\in D}{\sum\limits_{j=1}^{n}{{{\alpha }_{j}}\left( {{x}_{i}} \right)}}\]

For a parabolic principal ${{H}^{\mathbb{C}}}=\mathrm{GL}\left(n,\mathbb{C}\right)$-bundle $E$, let $E\left( {{\mathfrak{m}}^{\mathbb{C}}} \right)$ denote the (parabolic) bundle associated to $E$ via the isotropy representation and, as a bundle,
	\[E\left( {{\mathfrak{m}}^{\mathbb{C}}} \right)=\mathrm{Sy}{{\mathrm{m}}^{n}}\left( V \right)\oplus \mathrm{Sy}{{\mathrm{m}}^{n}}\left( {{V}^{*}} \right)\]
for $V$ the rank $n$ bundle associated by the standard representation. The definition of a parabolic $\mathrm{Sp}\left( 2n,\mathbb{R} \right)$-Higgs bundle according to the authors in \cite{BiGaRi} specializes to the following:

\begin{defn}\label{502}
Let $X$ be a compact Riemann surface of genus $g$ and let the divisor $D:=\left\{ {{x}_{1}},\ldots ,{{x}_{s}} \right\}$ of $s$-many distinct points on $X$, assuming that $2g-2+s>0$. A \emph{parabolic $\mathrm{Sp}\left( 2n,\mathbb{R} \right)$-Higgs bundle} is defined as a triple $\left( V,\beta ,\gamma  \right)$, where
\begin{itemize}
\item $V$ is a rank $n$ bundle on $X$, equipped with a parabolic structure given by a weighted flag as in (2), and
\item The maps  $\beta :{{V}^{\vee }}\to V\otimes K\otimes \xi$ and $\gamma :V\to {{V}^{\vee }}\otimes K\otimes \xi$ are parabolic symmetric morphisms.
\end{itemize}
\end{defn}

The parabolic structures on $V$ and ${{V}^{\vee }}$ now induce a parabolic structure on the parabolic sum $E=V\oplus {{V}^{\vee }}$, for which $\pardeg E=0$. We define alternatively a parabolic $\mathrm{Sp}\left( 2n,\mathbb{R} \right)$-Higgs bundle on $\left( X,D \right)$ as a parabolic Higgs bundle $\left( E,\Phi  \right)$, where $E=V\oplus {{V}^{\vee }}$ and $\Phi =\left( \begin{matrix}
   0 & \beta   \\
   \gamma  & 0  \\
\end{matrix} \right):E\to E\otimes K\left( D \right)$; the stability condition for such pairs $\left( E,\Phi  \right)$ will be the one considered in Definition \ref{204}.

\begin{defn}\label{503}
The \emph{parabolic Toledo invariant} of a parabolic $\mathrm{Sp}\left( 2n,\mathbb{R} \right)$-Higgs bundle is defined as the rational number
\[\tau =\pardeg \left( V \right).\]
\end{defn}
Moreover, we may obtain a \emph{Milnor-Wood type inequality} for this topological invariant:
\begin{prop}\label{504}
Let $\left( E,\Phi  \right)$ be a semistable parabolic $\mathrm{Sp}\left( 2n,\mathbb{R} \right)$-Higgs bundle. Then \[\left| \tau  \right|\le n\left( g-1+\frac{s}{2} \right),\]
where $s$ is the number of points in the divisor $D$.
\end{prop}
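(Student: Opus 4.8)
The plan is to prove the two one-sided bounds $\tau\le n\left(g-1+\tfrac{s}{2}\right)$ and $\tau\ge -n\left(g-1+\tfrac{s}{2}\right)$ separately, and to reduce the second to the first. Passing from $(V,\beta,\gamma)$ to the dual triple $(V^{\vee},\gamma,\beta)$ produces again a semistable parabolic $\mathrm{Sp}(2n,\mathbb{R})$-Higgs bundle (the symmetric conditions on $\beta,\gamma$ are interchanged, stability is preserved since $E^{\vee}\cong V^{\vee}\oplus V$ is just $E$ with its summands swapped), and its Toledo invariant is $\pardeg(V^{\vee})=-\tau$. Hence the lower bound for $\tau$ is exactly the upper bound for the dual, and it suffices to establish $\tau\le n\left(g-1+\tfrac{s}{2}\right)$. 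I emphasize that the general inequality of Proposition \ref{501} is not sharp enough here: bounding $\mathrm{rk}(\beta),\mathrm{rk}(\gamma)$ by $n$ only yields a constant twice too large, so the factor $\tfrac12$ must be recovered from the \emph{symmetry} of the Higgs field, and I will argue directly from the stability condition of Definition \ref{204}.

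For the upper bound, let $W=\ker\gamma\subseteq V$ be the saturated kernel of the parabolic symmetric morphism $\gamma\colon V\to V^{\vee}\otimes K(D)$, and set $r=\mathrm{rk}(\gamma)=n-\mathrm{rk}(W)$. The first input is that $W\oplus 0\subseteq E=V\oplus V^{\vee}$ is a $\Phi$-invariant parabolic subbundle: since $\gamma(W)=0$ one has $\Phi(W\oplus 0)=0$. Because $\pardeg E=0$, semistability forces
\[\pardeg W\le 0.\]
The second, and decisive, input uses that $\gamma$ is symmetric. Viewing $\gamma$ as a $K(D)$-valued symmetric pairing on $V$, a vector lies in $\ker\gamma$ exactly when it annihilates the image of $\gamma$; equivalently $\mathrm{im}\,\gamma\subseteq W^{\circ}\otimes K(D)$, where $W^{\circ}\subseteq V^{\vee}$ is the annihilator of $W$, canonically the parabolic dual $(V/W)^{\vee}$. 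Consequently the injection induced by $\gamma$ factors as a parabolic injection
\[\overline{\gamma}\colon V/W\hookrightarrow W^{\circ}\otimes K(D)\cong (V/W)^{\vee}\otimes K(D),\]
a nonzero parabolic morphism between parabolic bundles of the same rank $r$. Taking determinants, this gives a nonzero parabolic morphism of parabolic line bundles, whence $\pardeg(V/W)\le \pardeg\bigl((V/W)^{\vee}\otimes K(D)\bigr)$. Since $K(D)=K\otimes\xi$ carries the trivial parabolic structure with $\pardeg K(D)=\deg K(D)=2g-2+s$, the right-hand side equals $-\pardeg(V/W)+r(2g-2+s)$, so
\[\pardeg(V/W)\le \tfrac{r}{2}(2g-2+s)=r\left(g-1+\tfrac{s}{2}\right).\]
Combining the two inputs through the parabolic exact sequence $0\to W\to V\to V/W\to 0$ yields
\[\tau=\pardeg V=\pardeg W+\pardeg(V/W)\le 0+r\left(g-1+\tfrac{s}{2}\right)\le n\left(g-1+\tfrac{s}{2}\right),\]
using $r\le n$ and $g-1+\tfrac{s}{2}>0$. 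This completes the upper bound, and with the duality reduction the full statement $\lvert\tau\rvert\le n\left(g-1+\tfrac{s}{2}\right)$ follows.

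The main obstacle is the parabolic bookkeeping in the second step. One must check that the identification $\ker\gamma=(\mathrm{im}\,\gamma)^{\circ}$ coming from symmetry is compatible with the induced parabolic structures, that the annihilator $W^{\circ}$ really computes the parabolic dual of $V/W$ so that $\pardeg(W^{\circ})=-\pardeg(V/W)$, and that $\overline{\gamma}$ is a genuine parabolic morphism in the sense of Definition \ref{202} so that the inequality $\pardeg(V/W)\le\pardeg\bigl((V/W)^{\vee}\otimes K(D)\bigr)$ is valid at the level of parabolic degrees rather than ordinary degrees. Saturation of $W=\ker\gamma$ and of $\mathrm{im}\,\gamma$ must be handled so that no torsion contributions spoil the degree comparison; once these weight-compatibility points are verified, the numerical count is immediate from $\pardeg K(D)=2g-2+s$.
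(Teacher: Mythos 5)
Your proof is correct, but the decisive step is genuinely different from the paper's. Both arguments open identically: $\ker\gamma\oplus 0$ is a $\Phi$-invariant parabolic subbundle of $E$, so semistability together with $\pardeg E=0$ gives $\pardeg(\ker\gamma)\le 0$; and both reduce the lower bound to the upper one by swapping $\beta$ and $\gamma$ and dualizing $V$ (the paper's map is $(V,\beta,\gamma)\mapsto(V^{\vee}\otimes\xi,\gamma,\beta)$, yours omits the $\xi$-twist, but the idea is the same). The difference is where the factor $\tfrac12$ comes from. The paper never uses the symmetry of $\gamma$: it sets $I=\operatorname{Im}(\gamma)\otimes(K\otimes\xi)^{-1}\subseteq V^{\vee}$, observes that $V\oplus\tilde I$ (with $\tilde I$ the saturation) is again $\Phi$-invariant, and applies semistability a \emph{second} time to get $\pardeg V+\pardeg I\le 0$; feeding this into $\pardeg V=\pardeg N+\pardeg I+\mathrm{rk}(I)(2g-2+s)$ yields $2\tau\le \mathrm{rk}(I)(2g-2+s)$. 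You instead invoke semistability only once and extract the $\tfrac12$ from the symmetry of $\gamma$: since $\operatorname{Im}\gamma$ annihilates $\ker\gamma$, the Higgs field descends to an injective parabolic morphism $V/W\hookrightarrow (V/W)^{\vee}\otimes K(D)$ of equal rank, whence $2\pardeg(V/W)\le r(2g-2+s)$. Each route buys something: the paper's is group-agnostic — it needs no symmetry of $\beta,\gamma$, which is exactly why \S 8.1 can rerun it verbatim for $\mathrm{SU}(n,n)$ — while yours isolates the self-duality mechanism, uses stability only once, and makes transparent why the bound is half of what Proposition \ref{501} gives; the price is that it is tied to groups whose Cayley data is a symmetric (or skew) pairing. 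One caution on your flagged bookkeeping: the determinant of a parabolic morphism need not be a parabolic morphism of the determinant parabolic line bundles (fractional parts of the weight sums can reverse the weight inequality without forcing any vanishing), so the inequality $\pardeg(V/W)\le\pardeg\bigl((V/W)^{\vee}\otimes K(D)\bigr)$ should be justified by the standard equal-rank lemma rather than by determinants: a parabolic injection $f\colon F_1\to F_2$ of equal rank induces injections $F_{1,a}\hookrightarrow F_{2,a}$ of the associated $\mathbb{R}$-filtered sheaves for every $a$, and integrating $\deg F_{1,a}\le \deg F_{2,a}$ over $a\in[0,1)$ gives $\pardeg F_1\le \pardeg F_2$. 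With that substitution your argument is complete.
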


\begin{proof}
Consider parabolic bundles $N=\ker \left( \gamma  \right)$ and $I=\operatorname{Im}\left( \gamma  \right)\otimes \left(K\otimes \xi\right)^{-1}\le {{V}^{\vee }}$. \\
We thus get an exact sequence of parabolic bundles \[0\to N\to V\to I\otimes K\otimes \xi\to 0\] and so
\begin{align*}
   \mathrm{par}\deg \left( V \right) & = \mathrm{par}\deg \left( N \right)+\mathrm{par}\deg \left( I\otimes K\otimes \xi \right) \\
   & = \mathrm{par}\deg \left( N \right)+\mathrm{par}\deg \left( I \right)+\mathrm{rk}\left( I \right)\left( 2g-2+s \right)
   \tag{3}\end{align*}
using the formula that gives the parabolic degree for the tensor product and the fact that $\mathrm{par}\deg \left(K\otimes \xi\right)=2g-2+s$.\\
$I$ is a subsheaf of ${{V}^{\vee }}$ and $I\hookrightarrow {{V}^{\vee }}$ is a parabolic map. Let $\tilde{I}\subset {{V}^{\vee }}$ be its saturation, which is a subbundle of ${{V}^{\vee }}$ and endow it with the induced parabolic structure. So $N,V\oplus \tilde{I}\subset E$ are $\Phi $-invariant parabolic subbundles of $E$. The semistability of $\left( E,\Phi  \right)$ now implies $\mathrm{par}\mu \left( N \right)\le \mathrm{par}\mu \left( E \right)$ and
$\mathrm{par}\mu \left( V\oplus I \right)\le \mathrm{par}\mu \left( V\oplus \tilde{I} \right)\le \mathrm{par}\mu \left( E \right)$. However,
\[\mathrm{par}\mu \left( E \right)=\frac{\pardeg \left(E \right)}{\mathrm{rk}\left( E \right)}=0,\]
thus we have
\[\pardeg \left( N \right)\le 0\]
and
\[ \pardeg \left( V \right) +\pardeg \left( I \right)\le 0.\]
Equation (3) provides that
\[ \pardeg \left( V \right)\le -\pardeg \left( V \right)+rk\left( I \right)\left( 2g-2+s \right),\]
thus
\[ \tau \le n\left( g-1+\frac{s}{2} \right).\]
The map $\left( V,\beta ,\gamma  \right)\mapsto \left( {{V}^{\vee }}\otimes \xi ,\gamma ,\beta  \right)$ defines an isomorphism ${{\mathsf{\mathcal{M}}}_{-\tau }}\cong {{\mathsf{\mathcal{M}}}_{\tau }}$ providing the minimal bound $-\tau \le n\left( g-1+\frac{s}{2} \right)$, where ${{\mathsf{\mathcal{M}}}_{\tau }}$ denotes the subspace consisted of the pairs with fixed parabolic Toledo invariant $\tau$.
\end{proof}

\begin{defn}\label{505}
The polystable parabolic $\mathrm{Sp}\left( 2n,\mathbb{R} \right)$-Higgs bundles with fixed parabolic structure $\alpha$ and parabolic Toledo invariant $\tau =n\left( g-1+\frac{s}{2} \right)$ will be called \emph{maximal} and we shall denote the subspace of the moduli space containing those by $\mathsf{\mathcal{M}}_{par}^{max,\alpha}(\mathrm{Sp}(2n,\mathbb{R}))$. We define $\mathcal{M}_{par}^{max}(\mathrm{Sp}(2n,\mathbb{R}))$ to be the union of $\mathsf{\mathcal{M}}_{par}^{max,\alpha}(\mathrm{Sp}(2n,\mathbb{R}))$ with $\alpha \in \frac{1}{2} \mathfrak{t}$, i.e. $\mathcal{M}_{par}^{max}(\mathrm{Sp}(2n,\mathbb{R}))=\cup_{\alpha\in\frac{1}{2}\mathfrak{t}}\mathcal{M}_{par}^{max,\alpha}(\mathrm{Sp}(2n,\mathbb{R}))$.
\end{defn}

It can be shown that this subspace is non-empty (see \cite{Kydon}).

\section{The correspondence to orbifold Higgs bundles}

The topology of parabolic semistable $G$-Higgs bundle moduli spaces has been studied so far in the case when $G=\mathrm{GL(}n\mathrm{,}\mathbb{C}\mathrm{)}$ in \cite{GaGoMu}, $G=\mathrm{U}\left( 2,1 \right)$ in \cite{Loga2} and $G=\mathrm{U}\left( p,q \right)$ in \cite{GaLoMu}, \cite{Loga}, which pioneered the study of irreducible components of parabolic Higgs bundles for a real Lie group. In the case $G=\mathrm{Sp}\left( 2n,\mathbb{R} \right)$, we are fixing the parabolic degree $d=\mathrm{pardeg}(V)$ of the bundle and the weight type $\alpha$, where we assign weight equal to either $0$ or $\frac{1}{2}$ for the trivial flag on each fiber $V_{x_{i}}$, $x_{i}\in{D}$; let ${\mathsf{\mathcal{M}}}_{par}^d \left( \mathrm{Sp}\left( 2n,\mathbb{R} \right)  \right)$ be the moduli space of polystable parabolic $\mathrm{Sp}\left( 2n,\mathbb{R} \right)$-Higgs bundles of degree $d$ and the weight type described above. We note that the parabolic structures we consider in the rest of the paper lie in $\frac{1}{2}\mathfrak{t}$. In other words, any parabolic weight can be written as a fraction with denominator $2$.

The reason why we are fixing these particular parabolic structures for parabolic Higgs bundles in the moduli space $\mathsf{\mathcal{M}}_{par}^d \left( \mathrm{Sp}\left( 2n,\mathbb{R} \right) \right)$ will become clear in what follows. In the case when the weights in the parabolic structure of the bundle are rational numbers, we use a correspondence between parabolic Higgs bundles and orbifold Higgs bundles in order to define appropriate topological invariants and count connected components. Under the assumption that the choices of the weights are either $0$ or $\frac{1}{2}$, any weight can be written as a fraction with denominator $2$. Thus, we may construct a special $V$-manifold from the data $\left( X,D,2 \right)$, where $m=2$ describes the cyclic group action around the points in the divisor $D$ and it precisely  corresponds to the denominator $``2"$. The topological invariants of the corresponding Higgs $V$-bundles we are interested in will be conceived as characteristic classes in $V$-cohomology groups with ${{\mathbb{Z}}_{2}}$-coefficients. In this section, we review -for the most part- the correspondences from \cite{Biswas2}, \cite{FuSt} and \cite{NaSt}. In particular, we construct an orbifold Higgs field for a bundle of any rank following closely the constructions in the aforementioned references. 

\subsection{Orbifold Higgs bundles}
Let $Y$ be a closed, connected, smooth Riemann surface and let $\mathrm{Aut}\left( Y \right)$ be the group of algebraic automorphisms of $Y$. Assume that the finite group $\Gamma$ acts faithfully on $Y$, in other words, there is an injective homomorphism $h :\Gamma \to \mathrm{Aut}\left( Y \right)$. Denote by $[Y / \Gamma]$ the orbifold and let $E$ be a vector bundle over $Y$. We say that $E$ is \emph{$\Gamma$-equivariant}, if there is a group action on $E$, $\rho: \Gamma \times E \rightarrow E$, such that $\phi \circ \rho(\gamma,z)=h(\gamma)(\phi(z))$, where $\phi : E \rightarrow Y$ is the projection.

\begin{defn}\label{601}
An \emph{orbifold sheaf} on $[Y / \Gamma]$ is a torsion free coherent sheaf $E$ on $Y$ together with a lift of the action of $\Gamma$ to $E$, such that the automorphism of the space of stalks for the action of any $g\in \Gamma $ is a coherent sheaf isomorphism between $E$ and $\rho {{\left( {{g}^{-1}} \right)}^{*}}E$; when $E$ is locally free, it is called an \emph{orbifold bundle}.
\end{defn}

\begin{defn}
The \textit{degree} of an orbifold bundle $E$ on $[Y/\Gamma]$ is defined to be
\[\deg_{orb}(E)=\frac{1}{|\Gamma|}\int_Y c_1(E),\]
where $c_1(E)$ is the first Chern class of $E$ as a holomorphic bundle on $Y$.

\noindent The \textit{orbifold slope} will be given by the fraction
\[\mu_{orb}(E)=\frac{\deg(E)}{\mathrm{rk}(E)}.\]
\end{defn}

Recall that a \emph{Higgs field} $\Phi$ of a holomorphic bundle $E$ over $Y$ is a holomorphic section of $\mathrm{End}(E)\otimes K$, where $K$ is the canonical line bundle over $Y$. We define next the orbifold Higgs field:

\begin{defn}\label{602}
An \emph{orbifold Higgs field} $\Phi$ over the orbifold bundle $E$ is a Higgs field such that it is equivariant with respect to the action of $\Gamma$, i.e., $\rho(g^{-1})^* \Phi = \Phi$.
\end{defn}

\begin{defn}\label{603}
A \emph{Higgs bundle over the orbifold} $[Y / \Gamma]$ is a pair $(E, \Phi)$, where $E$ is an orbifold bundle and $\Phi$ is an orbifold Higgs field.
\end{defn}

An orbifold bundle $E$ is called \emph{orbifold stable (resp. semistable)}, if for any $\Gamma$-invariant stable (resp. semistable) subbundle $F$ of $E$ with $0 < \mathrm{rank } F <\mathrm{rank } E$, the inequality $\mu_{orb}(F) < \mu_{orb}(E)$ (resp. $\mu_{orb}(F) \le \mu_{orb}(E)$) holds. An orbifold Higgs bundle $(E, \Phi)$ will be called \emph{orbifold stable (resp. semistable)}, if for any orbifold Higgs field $\Phi$ and $\Gamma$-invariant subbundle $F$, the above inequality holds; further details can be found in \cite{FuSt}, \cite{NaSt} and \cite{Biswas2}.

In this article, we are interested in the global quotient $[Y / \Gamma]$ where the underlying space $X$ is a compact Riemann surface.

\subsection{Local Picture of an Orbifold Higgs Bundle}
Let $\widetilde{M}$ be a $k$-dimensional manifold with $s$-many marked points $x_1,...,x_s$. For each marked point, there is a linear representation $\sigma_i: \Gamma_i \rightarrow \mathrm{Aut}(\mathbb{R}^k)$ of a cyclic group ${{\Gamma }_{i}}=\left\langle {{\sigma }_{i}} \right\rangle $, $1 \leq i \leq s$, where $\Gamma_i$ acts freely on $\mathbb{R}^k \backslash \{0\}$ together with an atlas of coordinate charts
\begin{align*}
& \phi_i: U_i \rightarrow D^k / \sigma_i, & 1 \leq i \leq s;\\
& \phi_p: U_p \rightarrow D^k, & p \in \widetilde{M} \backslash \{x_1,...,x_s\}.
\end{align*}
We get the orbifold $M$ by gluing all local coordinate charts above, while $\widetilde{M}$ is the underlying manifold of $M$. The example we are interested in is $M=[Y / \Gamma]$, where $Y$ is a closed, connected, smooth Riemann surface and $\Gamma$ is a finite group acting effectively on $Y$. In this case, the underlying space $\widetilde{M}$ is exactly the underlying space $X$ of $[Y/\Gamma]$. In \cite{FuSt}, M. Furuta and B. Steer consider this construction to define a \emph{$V$-manifold}; we review some properties of $V$-manifolds in \S 7 later on.

\begin{defn}\label{604}
A \emph{holomorphic orbifold bundle} $E$ of rank $n$ over $M$ is defined locally on the charts as above with a collection of isotropy representations $\tau_i: \Gamma_i \rightarrow \mathrm{Aut}(C^n)$ and local trivializations $\theta_i : E|_{U_i} \rightarrow D^k \times C^n / \sigma_i \times \tau_i$, for $1 \leq i \leq s$.
\end{defn}
Forgetting the group action, we get a well defined holomorphic vector bundle $\widetilde{E}$ over the underlying space $\widetilde{M}$. We say that a local trivialization $\Theta_i : \widetilde{E}|_{D^k} \rightarrow D^k \times C^n$ is \emph{compatible} with the orbifold structure (with respect to $E$), if $\Theta_i$ is $\Gamma_i$-equivariant, where the $\Gamma_i$ action comes from the local trivialization $\theta_i$. Notice that Definition \ref{604} is the local description of Definition \ref{601}.

We now give an example of the local chart of a rank $n$ holomorphic orbifold bundle $E$ over $M=[U / \mathbb{Z}_m]$, where $m \geq 2$. A local trivialization $\Theta : \widetilde{E} \rightarrow U \times \mathbb{C}^n$ is $\mathbb{Z}_m$-equivariant with respect to the following action
\begin{align*}
t(z;z_1,z_2,...,z_n)=(tz;t^{k_1}z_1,t^{k_2}z_2,...,t^{k_n}z_n),
\end{align*}
where $k_1,...,k_n$ are integers such that $k_1 \leq k_2 \leq ... \leq k_n \leq m$. We can take local holomorphic sections $f_1,...,f_n$ of $\widetilde{E}$ such that $\{f_1(x),...,f_n(x)\}$ is a basis of $(\widetilde{E})_x$ consisting of eigenvectors. Then, we can set
\begin{align*}
\Theta=(t^{-k_1}(t \cdot f_1),...,t^{-k_n}(t \cdot f_n)),
\end{align*}
where $t \cdot f_i(x)=t^{k_i}f_i(x)$.

Let now $\Phi$ be a Higgs field over $E$. In our example, $\Phi$ can be written with respect to the local chart $[U / \mathbb{Z}_m]$ as follows:
\begin{align*}
\Phi=(\phi_{ij})_{1 \leq i,j \leq n},
\end{align*}
where
\begin{align*}\tag{4}
\phi_{ij}=
\begin{cases}
z^{k_{i}-k_{j}} \hat{\phi}_{ij}(z^{m})\frac{dz}{z} & \mathrm{ if } k_i \geq k_j\\
0 & \mathrm{ if } k_i < k_j,
\end{cases}
\end{align*}
and $\hat{\phi}_{ij}$ are holomorphic functions on $\widetilde{E}$. We explain why $\Phi$ can in fact be written this way in the following two remarks.

\begin{rem}\label{605}
In general, $\Phi \in H^0(\mathrm{End}_0 (E)\otimes K)$ is $\mathbb{Z}_m$-equivariant, where $\mathrm{End}_0(E)$ denotes the traceless homomorphisms of $E$ and the action of $\mathbb{Z}_m$ on $\mathrm{End}_0 (E)\otimes K$ is conjugation. Under the conjugation action, we have
\begin{align*}
\phi_{ij}=
\begin{cases}
z^{k_{i}-k_{j}} \hat{\phi}_{ij}(z^{m})\frac{dz}{z} & \mathrm{ if } k_i \geq k_j\\
z^{k_{i}-k_{j}} \hat{\phi}_{ij}(z^{m})\frac{dz}{z} & \mathrm{ if } k_i < k_j.
\end{cases}
\end{align*}
If $k_i \leq k_j$, then $z^{k_{i}-k_{j}}$ is a negative number, which means possibly a meromorphic section, not holomorphic. Hence, we may define $\phi_{ij}$ as in (4).
\end{rem}

\begin{rem}\label{606}
In the next subsection, we construct the correspondence between an orbifold Higgs bundle and a parabolic Higgs bundle. Under this correspondence, $\Phi=(\phi_{ij})$ is a Higgs field, and the fact that $\Phi$ is a ``lower triangular matrix'' means that $\Phi$  preserves the filtration (cf. Definition 2.5), hence $\Phi$ is a well-defined parabolic Higgs field.
\end{rem}

\subsection{Orbifold Higgs bundle vs. Parabolic Higgs bundle}

We construct a parabolic Higgs bundle over an underlying surface from a given orbifold Higgs bundle and show that this construction is precisely a one-to-one correspondence. This provides that given any parabolic Higgs bundle over the underlying surface, we can recover the orbifold Higgs bundle. We discuss the local construction in detail for both the holomorphic bundle and the Higgs field; this local construction can be glued naturally. The construction follows very closely \cite{FuSt}, \cite{NaSt}, and we are adding one more condition on the Higgs field which should be a lower triangular matrix; see also p. 624 in \cite{NaSt}.

In this section, all parabolic structures are assumed to have rational weights.

\subsubsection{Holomorphic bundle.}
We briefly review the construction by M. Furuta and B. Steer for the holomorphic bundle (cf. \cite{FuSt}). Since we work on the local chart, let $E$ be a rank $n$ holomorphic orbifold bundle over the orbifold surface $M=[U / \mathbb{Z}_m]$, where $m \geq 2$, with local trivialization $\Theta : \widetilde{E} \rightarrow U \times C^n$. The local trivialization $\Theta$ is $\mathbb{Z}_m$-equivariant with respect to the action
\begin{align*}
t(z;z_1,z_2,...,z_n)=(tz;t^{k_1}z_1,t^{k_2}z_2,...,t^{k_n}z_n).
\end{align*}

\noindent Now we consider a bundle map $f\left( {{k}_{1}},\ldots ,{{k}_{n}} \right):U\backslash \left\{ x \right\}\times {{\mathbb{C}}^{n}}\to U\backslash \left\{ x \right\}\times {{\mathbb{C}}^{n}}$ defined by
\[f=\left( \begin{matrix}
   {{z}^{{{k}_{1}}}} & {} & {}  \\
   {} & \ddots  & {}  \\
   {} & {} & {{z}^{{{k}_{n}}}}  \\
\end{matrix} \right).\]
Let $\tilde{\Theta }=f{{\left( {{k}_{1}},\ldots ,{{k}_{n}} \right)}^{-1}}\Theta $. It is not hard to check that
\begin{equation}\tag{5}
  \tilde{\Theta }\left( t\cdot z \right)=\left( \begin{matrix}
   {{z}^{{{k}_{1}}-{{k}_{1}}}} & {} & {}  \\
   {} & \ddots  & {}  \\
   {} & {} & {{z}^{{{k}_{n}}-{{k}_{n}}}}  \\
\end{matrix} \right)\tilde{\Theta }\left( z \right)=\tilde{\Theta }\left( z \right).
\end{equation}
Hence, we define $E\left( {{k}_{1}},\ldots {{k}_{n}} \right)$ to be the holomorphic orbifold bundle by patching $E\left| _{U\backslash \left\{ x \right\}} \right.$ and $U\times {{\mathbb{C}}^{n}}$ via $\tilde{\Theta }=f{{\left( {{k}_{1}},\ldots {{k}_{n}} \right)}^{-1}}\Theta $. From Equation (5), we also know the isotropy representation is trivial, thus $E\left( {{k}_{1}},\ldots {{k}_{n}} \right)$ is a well-defined holomorphic bundle over the underlying space $U$. To define the filtration corresponding to the orbifold bundle $\left( E,\Theta  \right)$, we have to make another assumption on the numbers ${{k}_{i}}$: We say the local trivialization $\Theta $ is \emph{good}, if
	\[{{k}_{1}}\le {{k}_{2}}\le \ldots \le {{k}_{n}}.\]
Let $r$ be the number of distinct ${{k}_{i}}$ and let ${{\kappa }_{1}},\ldots ,{{\kappa }_{r}}$ be the respective multiplicities of each of those distinct numbers. We define the \emph{parabolic structure} on $F=E\left( {{k}_{1}},\ldots {{k}_{n}} \right)$ at a point $p$ by the following filtration
	\[{{F}_{p}}={{F}_{1}}\supset {{F}_{2}}\supset \ldots \supset {{F}_{r+1}}=\left\{ 0 \right\},\]
where ${{F}_{i}}=\overbrace{0\oplus \cdots \oplus 0}^{{{j}_{i-1}}}\oplus \overbrace{\mathbb{C}\oplus \cdots \oplus \mathbb{C}}^{r-{{j}_{i-1}}}$, with weight $\frac{{{k}_{{{j}_{s}}}}}{m}$ and ${{j}_{s}}={{\kappa }_{1}}+\ldots +{{\kappa }_{s}}$. Clearly, $F$ is a parabolic vector bundle over the underlying space.

\begin{thm}[Theorem 5.7 in \cite{FuSt}]\label{607}
The construction from $E$ to $F=E\left( {{k}_{1}},\ldots {{k}_{n}} \right)$ gives a bijective correspondence between isomorphism classes of holomorphic orbifold bundles with good trivialization $\left( E,\Theta  \right)$ and isomorphism classes of parabolic bundles $\left( F,\tilde{\Theta } \right)$.
\end{thm}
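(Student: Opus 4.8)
The plan is to exhibit an explicit inverse to the construction $E \mapsto F = E(k_1,\ldots,k_n)$ and to verify that the two assignments are mutually inverse at the level of isomorphism classes, working first over a local chart $[U/\mathbb{Z}_m]$ and then gluing. First I would make the inverse precise. Given a parabolic bundle $(F,\tilde\Theta)$ over the local chart whose weights are $k_1/m \le \cdots \le k_n/m$, note that the defining condition $0 \le \alpha_1 \le \cdots \le \alpha_n < 1$ together with rationality of denominator $m$ forces a \emph{unique} choice of integers $0 \le k_1 \le \cdots \le k_n < m$. I recover an orbifold bundle by reversing (5): set $\Theta = f(k_1,\ldots,k_n)\,\tilde\Theta$ over $U \setminus \{x\}$ and declare the $\mathbb{Z}_m$-action on the fibre to be $t\cdot(z_1,\ldots,z_n) = (t^{k_1}z_1,\ldots,t^{k_n}z_n)$. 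The point, already implicit in the derivation of (5), is that conjugating the trivial action by $f$ produces exactly the diagonal action $t(z;z_i) = (tz;t^{k_i}z_i)$, so $\Theta$ is equivariant and $(E,\Theta)$ is a holomorphic orbifold bundle with good trivialization. Since $f f^{-1} = \mathrm{id}$, the forward construction applied to this $E$ returns $(F,\tilde\Theta)$, and conversely; so the two maps are mutually inverse on local models.

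Second, I would check independence of auxiliary choices, so that the correspondence descends to isomorphism classes. In the forward direction the only choices are the eigenbasis $f_1,\ldots,f_n$ of the fibre and the ordering of the $k_i$; the good-trivialization hypothesis fixes the ordering, and two eigenbases differing by a block-diagonal automorphism (blocks indexed by the distinct values of $k_i$) produce the same filtration $F_1 \supset \cdots \supset F_{r+1}$ with the same weights $k_{j_s}/m$. Thus isomorphic orbifold bundles yield isomorphic parabolic bundles. For the converse I would show that a parabolic isomorphism $F \to F'$, in the sense of Definition \ref{202}, lifts after multiplication by $f$ to a $\mathbb{Z}_m$-equivariant isomorphism of the associated orbifold bundles: the weight-compatibility built into Definition \ref{202} is precisely what forces the off-diagonal matrix entries of the lift to carry the nonnegative powers of $z$ needed for holomorphicity across $x$, in exact parallel to the bookkeeping of Remark \ref{605}.

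Third, I would globalize. Away from the marked points both objects are ordinary holomorphic bundles and both functors reduce to the identity, so the local correspondences over the orbifold charts agree with the trivial correspondence over the manifold charts on their overlaps. The transition data therefore patch to a global parabolic bundle over $X$ and, running the inverse, to a global orbifold bundle over $[Y/\Gamma]$; the compatibility with changes of chart is automatic because the twist $f$ is determined canonically by the (intrinsic) characters of the isotropy action at each $x_i$.

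The step I expect to be the main obstacle is the converse half of the second paragraph: verifying that parabolic morphisms correspond \emph{exactly} to equivariant morphisms, not merely surjectively. One must show both that the condition of Definition \ref{202} is equivalent to the existence of a holomorphic equivariant lift with no spurious pole at $x$, and that distinct parabolic morphisms lift to distinct equivariant ones. This reduces to a careful analysis of the powers $z^{k_i - k_j}$ appearing in the entries of a lifted morphism, entirely analogous to the computation (4) carried out for the Higgs field, where the sign of $k_i - k_j$ dictates holomorphicity versus vanishing of the corresponding entry.
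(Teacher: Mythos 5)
The paper offers no proof of this statement: it is imported from Furuta--Steer as Theorem 5.7 of \cite{FuSt}, with only the forward construction $E\mapsto E(k_1,\ldots,k_n)$ and equation (5) recalled in \S 6.3.1. Your outline is correct and is essentially the argument of that cited source: invert the twist by $f(k_1,\ldots,k_n)$ to recover the equivariant bundle, check independence of the adapted eigenbasis, and match parabolic morphisms with equivariant morphisms by analyzing the sign of the exponents $k_i-k_j$ in the lifted matrix entries --- the same bookkeeping the paper carries out for Higgs fields in computation (4).
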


\begin{rem}
Recall that this correspondence is established assuming rational weights in the parabolic structure.
\end{rem}

\subsubsection{Higgs field.}

We now describe the correspondence for the Higgs fields for $\left( E,\Theta  \right)$ and $\left( F,\tilde{\Theta } \right)$. M. Furuta and B. Steer have constructed this correspondence in the rank 2 case. We construct the Higgs field for any rank in a similar way. The difference is that our construction of the Higgs field preserves the filtration of a parabolic bundle (Definition 2.5).

Recall that Equation (4) gives the local description of the orbifold Higgs field on $M=[U / \mathbb{Z}_m]$. Under the correspondence $E \to F$ described in \S 6.3.1, the corresponding Higgs field $\tilde{\Phi }$ over the underlying space $\tilde{U}$ should be the conjugation of $\Phi $ by the matrix $f\left( {{k}_{1}},\ldots {{k}_{r}} \right)$. Hence, we have
\begin{align*}
{{\tilde{\phi }}_{ij}} & ={{z}^{{{k}_{j}}-{{k}_{i}}}}{{\hat{\phi}(z^m) }_{ij}}\frac{dz}{z}\\
& = \begin{cases}
\frac{{{{\hat{\phi }}}_{ij}}\left( w \right)}{m w}dw & \mathrm{ if } k_i > k_j\\
0 & \mathrm{ if } k_i \leq k_j,
\end{cases}
\end{align*}
where we change the coordinate by $w={{z}^{m }}$ in the second equality above. From this calculation, it is implied that $\tilde{\Phi }=\left( {{{\tilde{\phi }}}_{ij}} \right)$ is a section with at most simple pole at $p$ in ${{H}^{0}}\left( \mathrm{End}_0\left( F \right)\otimes K\left( p \right) \right)$, in other words, $\tilde{\Phi }:F\to F\otimes K\left( p \right)$.

Since the trivialization $\Theta $ is good, that is, ${{k}_{1}}\le {{k}_{2}}\le \ldots \le {{k}_{r}}$, the orbifold Higgs field $\Phi $ is a lower-triangular matrix and the same is true for $\tilde{\Phi }$, thus $\tilde{\Phi }$ preserves the filtration. In conclusion, $\tilde{\Phi }$ is a parabolic Higgs field. It is now not hard to recover the orbifold Higgs field $\Phi $ from $\tilde{\Phi }$, giving a one-to-one correspondence. In summary, we have the following theorem:
\begin{thm}\label{608}
The above construction gives a bijective correspondence between isomorphism classes of holomorphic orbifold Higgs bundles with good trivialization $\left( E,\Theta ,\Phi  \right)$ and isomorphism classes of parabolic Higgs bundles $\left( F,\tilde{\Theta },\tilde{\Phi } \right)$.
\end{thm}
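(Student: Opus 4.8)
The plan is to build on Theorem \ref{607}, which already supplies the bijection on the underlying holomorphic data $(E,\Theta)\leftrightarrow(F,\tilde\Theta)$, and to upgrade it to the Higgs data by analyzing the explicit local conjugation $\Phi\mapsto\tilde\Phi$ by $f(k_1,\dots,k_n)$ that was worked out just above the statement. Since the construction is purely local around each marked point and reduces to the identity away from the ramification points (where the isotropy is trivial), it suffices to establish a bijection on each chart $[U/\mathbb{Z}_m]$ and then to check that these local bijections are compatible with the transition data of Theorem \ref{607}, so that they glue to a global correspondence of isomorphism classes.

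First I would verify that the assignment $\Phi\mapsto\tilde\Phi$ lands in the space of parabolic Higgs fields. The computation preceding the statement shows that in the coordinate $w=z^m$ each entry becomes $\tilde\phi_{ij}=\frac{\hat\phi_{ij}(w)}{mw}\,dw$ for $k_i>k_j$ and $0$ for $k_i\le k_j$, so that $\tilde\Phi$ is a section of $\mathrm{End}_0(F)\otimes K(p)$ with at most a simple pole at $p$. Goodness of $\Theta$, namely $k_1\le\cdots\le k_n$, makes $\Phi$ and hence $\tilde\Phi$ lower triangular, so by Remark \ref{606} it preserves the filtration in the sense of Definition \ref{205}; thus $\tilde\Phi$ is a genuine parabolic Higgs field on $F$.

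Next I would construct the inverse. Given a parabolic Higgs field $\tilde\Phi$ on $F$, conjugating back by $f(k_1,\dots,k_n)^{-1}$ produces a local endomorphism-valued form $\Phi$, and the key point is that the filtration-preserving property of $\tilde\Phi$ forces the vanishing of exactly those entries $\tilde\phi_{ij}$ with $k_i\le k_j$. This is precisely what is needed for $\Phi$ to extend holomorphically across the orbifold point and to acquire the triangular shape dictated by Equation (4), i.e.\ to be $\mathbb{Z}_m$-equivariant in the sense of Definition \ref{602}. Reading the transformation in reverse then shows that $\Phi\mapsto\tilde\Phi$ and its inverse are mutually inverse local bijections.

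Finally I would promote this to isomorphism classes and globalize. A morphism of orbifold Higgs bundles is a $\Gamma$-equivariant bundle map intertwining the Higgs fields; under conjugation by $f$ it transforms into a parabolic morphism in the sense of Definition \ref{202} intertwining $\tilde\Phi$, and conversely, so the correspondence is functorial and carries isomorphisms to isomorphisms. Since the local identifications agree on overlaps with the globally defined bundle correspondence of Theorem \ref{607}, and the Higgs-field conjugation is the identity away from $D$, the local bijections patch to a bijection between isomorphism classes of $(E,\Theta,\Phi)$ and of $(F,\tilde\Theta,\tilde\Phi)$. I expect the main obstacle to be the inverse direction: verifying that the filtration-preserving condition is \emph{exactly} equivalent to holomorphy-plus-equivariance of the reconstructed orbifold Higgs field, so that no parabolic Higgs field is lost and none is spuriously created when passing back through $f^{-1}$. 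This is where the interplay between the weight ordering $k_1\le\cdots\le k_n$ and the pole structure of $\tilde\Phi$ must be pinned down precisely.
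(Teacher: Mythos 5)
Your proposal is correct and follows essentially the same route as the paper: the paper's own argument consists precisely of the forward conjugation by $f(k_1,\ldots,k_n)$, the change of coordinate $w=z^m$ showing at most simple poles, and the observation that goodness ($k_1\le\cdots\le k_n$) makes $\tilde\Phi$ lower triangular and hence filtration-preserving, after which the inverse direction is dismissed with ``it is now not hard to recover the orbifold Higgs field $\Phi$ from $\tilde\Phi$.'' Your elaboration of the inverse step and the gluing to isomorphism classes actually supplies more detail than the paper does, and the issue you flag at the end --- that filtration-preservation must be matched \emph{exactly} against equivariance-plus-holomorphy of the reconstructed field (note that preservation of the flag at $p$ by itself only kills the polar parts of the entries with $k_i<k_j$, while the construction produces fields whose entries with $k_i\le k_j$ vanish identically) --- is precisely the point the paper leaves implicit via its added ``lower triangular'' convention in Remark \ref{606}.
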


Given this theorem, the next step is to show that this correspondence holds in the semistable (resp. stable) case. The following theorem gives us a way to calculate the degree of an orbifold line bundle.

\begin{thm}[Kawasaki-Riemann-Roch \cite{Kawasaki}]\label{609}
If $E$ is a holomorphic orbifold line bundle over $[Y / \Gamma]$ with isotropy $\sigma_i^{\beta_i}$ at $x_i$, $1 \leq i \leq s$, then
\begin{align*}\tag{6}
\dim H^0(M,E) -\dim H^1(M,E)=1-g+\deg(E)-\sum_{i=1}^s \frac{\beta_i}{\alpha_i},
\end{align*}
where $\deg(E)$ is the degree of $E$ as an orbifold bundle over $[Y/\Gamma]$, which is a rational number, and $\alpha_i$ is the order of the group generated by $\sigma_i$.
\end{thm}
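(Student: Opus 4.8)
The plan is to reduce the orbifold Riemann--Roch computation to the classical Riemann--Roch theorem on the underlying surface $X = Y/\Gamma$, using the correspondence to parabolic bundles already established in Theorem \ref{607}. Write $\pi\colon Y\to X$ for the quotient map and let $\mathcal{E}$ be the $\Gamma$-equivariant line bundle on $Y$ underlying the orbifold line bundle $E$. First I would form the \emph{invariant direct image} $\tilde{E}=(\pi_*\mathcal{E})^{\Gamma}$, which away from the finitely many orbifold points $x_i$ is just $\mathcal{E}$ descended, and near each $x_i$ is determined by the local model. The key structural claim is that $\tilde{E}$ is an honest holomorphic line bundle on the smooth curve $X$, and that, forgetting the parabolic weights, it coincides with the underlying bundle $F$ produced by the Furuta--Steer construction of \S 6.3.1.

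Second, I would identify orbifold cohomology with ordinary cohomology of $\tilde{E}$. Since $\pi$ is a finite morphism, $\pi_*$ is exact; and since $|\Gamma|$ is invertible, taking $\Gamma$-invariants is exact as well (via the averaging projector), so
\[
H^{i}(M,E)\;=\;H^{i}(Y,\mathcal{E})^{\Gamma}\;\cong\;H^{i}\bigl(X,(\pi_*\mathcal{E})^{\Gamma}\bigr)\;=\;H^{i}(X,\tilde{E}),\qquad i=0,1.
\]
This is precisely the statement that a section of $E$ over the orbifold is a $\Gamma$-equivariant section of $\mathcal{E}$, which in the local uniformizing chart $z\mapsto \zeta z$ with fiber action $v\mapsto \zeta^{\beta_i}v$ is a holomorphic function of the form $z^{\beta_i}h(z^{\alpha_i})$; passing to the coordinate $w=z^{\alpha_i}$ on $X$ exhibits it as a genuine local section of $\tilde{E}$. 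Applying the classical Riemann--Roch theorem to the line bundle $\tilde{E}$ on the compact curve $X$ of genus $g$ then gives
\[
\dim H^{0}(X,\tilde{E})-\dim H^{1}(X,\tilde{E})\;=\;1-g+\deg\tilde{E}.
\]

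Finally, it remains to compute $\deg\tilde{E}$. Here I would use that the orbifold degree agrees with the parabolic degree of the associated parabolic line bundle, whose weight at $x_i$ is exactly $\beta_i/\alpha_i$ (normalizing $0\le\beta_i<\alpha_i$). Since $\pardeg(\tilde{E})=\deg\tilde{E}+\sum_{i}\beta_i/\alpha_i$ and $\pardeg(\tilde{E})=\deg_{orb}(E)$, one gets $\deg\tilde{E}=\deg_{orb}(E)-\sum_{i=1}^{s}\beta_i/\alpha_i$, and substituting into the Riemann--Roch identity yields the claimed formula. The main obstacle is the local bookkeeping at the orbifold points: one must check carefully that the fractional contributions assemble into exactly $\sum_i \beta_i/\alpha_i$, i.e. that the comparison between the orbifold first Chern class $\frac{1}{|\Gamma|}\int_Y c_1(\mathcal{E})$ and the integral degree of the descended bundle picks up precisely the local weights. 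An equivalent and perhaps more transparent route to this degree identity is the holomorphic Lefschetz fixed-point formula applied to the $\Gamma$-action on $H^{\bullet}(Y,\mathcal{E})$: the identity element contributes $\frac{1}{|\Gamma|}\chi(Y,\mathcal{E})$, while the fixed points of the nontrivial group elements over each $x_i$ supply exactly the correction $-\sum_i \beta_i/\alpha_i$; I would use whichever of the two computations is cleaner to finish.
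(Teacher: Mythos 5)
Your proposal is sound, but note that the paper itself contains no proof of Theorem \ref{609}: the statement is imported as a citation to Kawasaki \cite{Kawasaki}, whose argument is an index-theoretic Riemann--Roch for general complex $V$-manifolds. What you give is an elementary substitute that works precisely because the orbifold here is a one-dimensional global quotient $[Y/\Gamma]$: take $H^i(M,E)=H^i(Y,\mathcal{E})^{\Gamma}$ as the definition of orbifold cohomology, use exactness of $\pi_*$ for the finite map $\pi\colon Y\to X$ together with exactness of $\Gamma$-invariants in characteristic zero (the averaging idempotent splits $(\pi_*\mathcal{E})^{\Gamma}$ off as a direct summand, so invariant cohomology is the cohomology of the invariant subsheaf), identify $(\pi_*\mathcal{E})^{\Gamma}$ with the Furuta--Steer desingularization of Theorem \ref{607} via the local normal form $z^{\beta_i}h(z^{\alpha_i})$, and finish with classical Riemann--Roch on $X$. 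This buys a self-contained proof of exactly the special case the paper needs, at the cost of not covering what Kawasaki's theorem actually provides (higher-dimensional $V$-manifolds, orbifolds that are not global quotients, higher rank).

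The one step you should make non-circular is the degree identity $\deg_X\tilde{E}=\deg_{orb}(E)-\sum_i\beta_i/\alpha_i$. Invoking ``orbifold degree equals parabolic degree'' is delicate in the context of this paper, because in the paragraph following Theorem \ref{609} that equality is presented as a consequence of Formula (6) --- the very statement you are proving. The clean justification is the local computation you allude to: the natural map $\pi^*\tilde{E}\to\mathcal{E}$ vanishes to order exactly $\beta_i$ at each of the $|\Gamma|/\alpha_i$ points of $\pi^{-1}(x_i)$, so
\[
\mathcal{E}\;\cong\;\pi^*\tilde{E}\otimes\mathcal{O}_Y\Bigl(\sum_{i=1}^{s}\beta_i\sum_{y\in\pi^{-1}(x_i)}y\Bigr),
\qquad\text{hence}\qquad
|\Gamma|\deg_{orb}(E)=\deg_Y\mathcal{E}=|\Gamma|\deg_X\tilde{E}+|\Gamma|\sum_{i=1}^{s}\frac{\beta_i}{\alpha_i}.
\]
With this inserted (or, equivalently, with the holomorphic Lefschetz computation you mention, which also closes the argument after a standard evaluation of the fixed-point sums), your proof is complete.
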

We remind the reader that $\deg(E)-\sum_{i=1}^s \frac{\beta_i}{\alpha_i}$ is an integer. Under the correspondence of Theorem 6.8, we have
\begin{equation*}
\deg(F)=\deg(E)-\sum_{i=1}^s \frac{\beta_i}{\alpha_i},
\end{equation*}
whereas Formula (6) implies that $\deg(E)= \pardeg (F)$. In conclusion, the equality of the degree provides the following proposition:
\begin{prop}[Proposition 5.9 in \cite{FuSt}]\label{610}
We have a bijective correspondence between isomorphism classes of holomorphic semistable (resp. stable) orbifold Higgs bundles with good trivialization $\left( E,\Theta ,\Phi  \right)$ and isomorphism classes of semistable (resp. stable) parabolic Higgs bundles $\left( F,\tilde{\Theta },\tilde{\Phi } \right)$.
\end{prop}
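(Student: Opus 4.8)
The plan is to leverage Theorem \ref{608}, which already furnishes the bijection at the level of isomorphism classes of the underlying Higgs data, and to upgrade it to one that respects (semi)stability. Since orbifold (semi)stability and parabolic (semi)stability are both phrased as slope inequalities ranging over the appropriate invariant subobjects, it suffices to establish two facts: first, that the correspondence of Theorem \ref{608} carries $\Gamma$-invariant, $\Phi$-invariant subbundles of $(E,\Phi)$ bijectively onto $\tilde\Phi$-invariant parabolic subbundles of $(F,\tilde\Phi)$; and second, that corresponding subobjects have equal slopes, $\mu_{orb}(E')=\mathrm{par}\mu(F')$.

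First I would establish the subobject correspondence. The Furuta--Steer construction of \S 6.3 is entirely local and natural: a $\Gamma$-invariant holomorphic subbundle $E'\subseteq E$ decomposes at each marked point into eigenspaces for the isotropy action, so it is itself good-trivialized with a sub-collection of the integers $k_1\le\cdots\le k_n$ attached to $E$. Applying the same patching $\tilde\Theta=f(k_1,\ldots,k_n)^{-1}\Theta$ to $E'$ therefore produces a parabolic subbundle $F'\subseteq F$ whose induced filtration and weights are exactly those inherited from $F$. Because conjugation by $f(k_1,\ldots,k_n)$ is precisely what intertwines $\Phi$ and $\tilde\Phi$ (as in \S 6.3.2), the condition $\Phi(E')\subseteq E'\otimes K$ transports to $\tilde\Phi(F')\subseteq F'\otimes K(p)$; this is where the lower-triangular shape of $\Phi$ guaranteed by goodness enters, ensuring that the transported Higgs field preserves the induced filtration. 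The construction is reversible by Theorem \ref{608}, yielding the desired bijection on invariant subobjects.

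Next I would match slopes. The rank is manifestly preserved, since $F=E(k_1,\ldots,k_n)$ has the same fiber dimension as $E$. The degree equality $\deg_{orb}(E)=\pardeg(F)$, derived above from Kawasaki--Riemann--Roch (Theorem \ref{609}), applies verbatim to each subbundle $E'$ with its induced orbifold structure and its image $F'$ with the induced parabolic structure, because the isotropy exponents of $E'$ are exactly the sub-collection of those of $E$. Dividing by the common rank gives $\mu_{orb}(E')=\mathrm{par}\mu(F')$, and in particular $\mu_{orb}(E)=\mathrm{par}\mu(F)$. Combining this with the subobject bijection, semistability of $(E,\Phi)$, i.e. $\mu_{orb}(E')\le\mu_{orb}(E)$ for every $\Gamma$-invariant $\Phi$-invariant $E'$, is equivalent to $\mathrm{par}\mu(F')\le\mathrm{par}\mu(F)$ for every $\tilde\Phi$-invariant parabolic $F'$, which is exactly semistability of $(F,\tilde\Phi)$; replacing $\le$ by $<$ on proper nonzero subobjects handles the stable case.

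The main obstacle I anticipate is the careful bookkeeping of the induced parabolic structure on a subbundle: one must verify that the weights assigned to $F'$ by restricting the filtration of $F$ coincide with the weights produced by running the construction directly on the invariant subbundle $E'$, and one must treat saturation, since a $\Gamma$-invariant subsheaf need not descend to a genuine subbundle. Here I would pass to the saturation, endow it with the induced parabolic structure, and check that this only increases the parabolic slope, so that the slope comparison testing (semi)stability is unaffected. Once this compatibility of weights and the saturation step are confirmed, the equivalence of the two stability conditions follows immediately from the slope identity.
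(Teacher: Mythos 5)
Your proposal is correct and takes essentially the same route as the paper: the paper likewise upgrades the bijection of Theorem \ref{608} to the (semi)stable setting by using the Kawasaki--Riemann--Roch formula (Theorem \ref{609}) to establish $\deg(E)=\pardeg(F)$ and then transferring the slope inequalities. Your explicit treatment of the correspondence of invariant subobjects and of saturation simply fills in bookkeeping that the paper leaves implicit.
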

The above correspondence is also established assuming rational weights in the parabolic structure.

\begin{rem}\label{611}
For the special maximal parabolic $G$-Higgs bundles we are considering, we have seen that the defining parabolic bundle data for those can be reinterpreted as a direct sum of parabolic vector bundles (as is $E=V\oplus {{V}^{\vee }}$ in the $\mathrm{Sp(}2n\mathrm{,}\mathbb{R}\mathrm{)}$ case), thus the correspondence of Proposition \ref{610} can be used into our setting.
\end{rem}

\section{Topological Invariants of Maximal Parabolic $\mathrm{Sp}(2n,\mathbb{R})$-Higgs bundles}

Under the correspondence described in the last section, we use the $V$-cohomology to describe the topological invariants of the maximal $\mathrm{Sp}(2n,\mathbb{R})$-parabolic Higgs bundles. In \cite{Scott}, an explanation is provided on how to construct the fundamental group of the orbifold, and  on p. 426-427 of the same article the homology group is defined and the character is calculated. From this, we can clearly define the orbifold cohomology ${{H}^{1}}(M)$ in our case. The $V$-manifold we discuss in this section is exactly an orbifold. The terminology $V$-manifold comes from \cite{FuSt} and \cite{NaSt}. We first review some basic properties of a $V$-manifold.

\subsection{$V$-manifold}
The $V$-manifold is an orbifold. We review the definition of an orbifold and a holomorphic bundle over an orbifold from the last section.

Let $\widetilde{M}$ be a $k$-dimensional manifold with $s$-many marked points $x_1,...,x_s$. For each marked point, there is a linear representation $\sigma_i: \Gamma_i \rightarrow \mathrm{Aut}(\mathbb{R}^k)$ of a cyclic group ${{\Gamma }_{i}}=\left\langle {{\sigma }_{i}} \right\rangle $, $1 \leq i \leq s$, where $\Gamma_i$ acts freely on $\mathbb{R}^k \backslash \{0\}$ together with an atlas of coordinate charts
\begin{align*}
& \phi_i: U_i \rightarrow D^k / \sigma_i, & 1 \leq i \leq s;\\
& \phi_x: U_x \rightarrow D^k, & x \in \widetilde{M} \backslash \{x_1,...,x_s\}.
\end{align*}
An orbifold $M$ is obtained by gluing all local coordinate charts above, while $\widetilde{M}$ is the underlying manifold of $M$. We call $M$ the $V$-manifold in this section. The \emph{$V$-bundle} $E$ of rank $l$ over $M$ (or vector bundle over the $V$-manifold $M$) is defined locally on the charts as above with a collection of isotropy representations $\tau_i: F_i \rightarrow \mathrm{Aut}(C^l)$ and local trivializations $\theta_i : E|_{U_i} \rightarrow D^k \times C^l / \sigma_i \times \tau_i, 1 \leq i \leq s$. We are interested in the case when the manifold $\widetilde{M}$ is $X=Y / \Gamma$, where $Y$ is a compact Riemann surface and $\Gamma$ is a finite group acting effectively on $Y$. The following theorem gives us the condition when the $V$-manifold $M$ can be written in the form $[Y / \Gamma]$.

\begin{thm}[Theorem 1.2 in \cite{FuSt}]\label{701}
Let ${{\alpha }_{i}}$ denote the order of the cyclic group generated by the linear representation  ${{\sigma }_{i}}$ described above. Any compact oriented $V$-surface $M$, with $s \geq 3$ or $s=2$ and $\alpha_1 \neq \alpha_2$ if $g=0$, has the form $Y / \Gamma$, where $Y$ is a compact Riemann surface with genus $g$ and $\Gamma$ is a finite group acting effectively.
\end{thm}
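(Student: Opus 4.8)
The plan is to realize $M$ as a global finite quotient by first uniformizing it as an orbifold and then descending to a torsion-free cover via Selberg's lemma. First I would record the orbifold (equivalently, $V$-) fundamental group. Writing $\widetilde{M}$ for the underlying closed oriented surface of genus $g$ and $x_1,\dots,x_s$ for the marked points with cone orders $\alpha_1,\dots,\alpha_s$, the group $\pi_1^{\mathrm{orb}}(M)$ has the presentation
\[
\pi_1^{\mathrm{orb}}(M)=\langle a_1,b_1,\dots,a_g,b_g,\sigma_1,\dots,\sigma_s \mid \sigma_i^{\alpha_i}=1\ (1\le i\le s),\ \sigma_1\cdots\sigma_s\textstyle\prod_{j=1}^{g}[a_j,b_j]=1\rangle,
\]
where $\sigma_i$ is a small loop around $x_i$ and the isotropy group at $x_i$ is $\langle\sigma_i\rangle\cong\mathbb{Z}/\alpha_i$.

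Next I would construct a simply connected model surface $\widehat{M}\in\{S^2,\mathbb{E}^2,\mathbb{H}^2\}$ carrying a properly discontinuous, cocompact, orientation-preserving action of $\pi_1^{\mathrm{orb}}(M)$ with $M=\widehat{M}/\pi_1^{\mathrm{orb}}(M)$. The choice of geometry is dictated by the sign of the orbifold Euler characteristic $\chi^{\mathrm{orb}}(M)=2-2g-\sum_{i=1}^{s}(1-1/\alpha_i)$: negative gives $\mathbb{H}^2$, zero gives $\mathbb{E}^2$, and positive gives $S^2$. Producing this action is exactly the assertion that $M$ is developable (the content of the Bundgaard--Nielsen--Fox theorem), and this is the only place where the hypothesis on $(g,s,\alpha_i)$ is used: it excludes the two non-developable (``bad'') closed oriented $2$-orbifolds, the teardrop and the spindle. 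In the generic hyperbolic case I would exhibit the action by hand, constructing a geodesic fundamental polygon in $\mathbb{H}^2$ with cone angles $2\pi/\alpha_i$ whose side-pairing transformations realize the relators above, and then applying Poincar\'e's polygon theorem to conclude that the group they generate is a discrete subgroup of $\mathrm{PSL}(2,\mathbb{R})$ with quotient orbifold $M$; the finitely many Euclidean and spherical cases are disposed of using the explicit crystallographic and finite $\mathrm{SO}(3)$ subgroups.

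With a discrete cocompact action in hand, I would pass to a finite manifold cover. Being a finitely generated linear group (a subgroup of $\mathrm{PSL}(2,\mathbb{R})$, $\mathrm{Isom}^+(\mathbb{E}^2)$, or $\mathrm{SO}(3)$), $\pi_1^{\mathrm{orb}}(M)$ is residually finite and satisfies Selberg's lemma, so it contains a normal torsion-free subgroup $K$ of finite index. Since $K$ is torsion-free it meets no conjugate of $\langle\sigma_i\rangle$ nontrivially, hence acts freely on $\widehat{M}$, so $Y:=\widehat{M}/K$ is a smooth closed oriented surface, that is, a compact Riemann surface. Putting $\Gamma:=\pi_1^{\mathrm{orb}}(M)/K$, which is finite, the induced action gives $Y/\Gamma=\widehat{M}/\pi_1^{\mathrm{orb}}(M)=M$, and effectiveness holds since any element acting trivially on $Y$ would lift into $K$. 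The genus of $Y$ is then recovered from the Riemann--Hurwitz formula applied to the branched cover $Y\to M$.

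The hard part is the uniformization step: establishing developability and producing the faithful discrete cocompact representation of $\pi_1^{\mathrm{orb}}(M)$ under the stated combinatorial hypothesis. Once that geometric input is secured, the descent to a finite quotient is purely formal, so I would structure the argument to isolate the exclusion of the teardrop and spindle at the single point where Poincar\'e's theorem (or the explicit low-complexity models) supplies the discrete action; everything afterward is standard covering-space theory and a Riemann--Hurwitz count.
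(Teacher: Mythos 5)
The paper does not prove this statement at all: Theorem \ref{701} is imported verbatim from Furuta--Steer \cite{FuSt}, so there is no internal argument to compare against, and your proposal must be judged on its own merits. On those merits it is correct in outline, and it follows the geometric route: show the orbifold is developable and carries a spherical, Euclidean, or hyperbolic structure according to the sign of $\chi^{\mathrm{orb}}(M)=2-2g-\sum_i(1-1/\alpha_i)$ (Poincar\'e's polygon theorem in the hyperbolic case, the finite rotation groups and wallpaper groups otherwise), then apply Selberg's lemma to the resulting discrete linear group and descend to a finite manifold cover. The classical route behind the cited theorem --- Fox's and Bundgaard--Nielsen's solution of Fenchel's conjecture --- is instead combinatorial: one produces a homomorphism from $\pi_1^{\mathrm{orb}}(M)$ onto a finite group $\Gamma$ which is injective on each isotropy subgroup $\langle\sigma_i\rangle$, and then builds $Y$ directly as the associated branched cover of the underlying surface $\widetilde{M}$ (fill in the punctures of the unbranched cover of $\widetilde{M}\setminus\{x_1,\dots,x_s\}$); this needs no geometric structure and no Selberg, and the exclusion of the teardrop and asymmetric spindle appears there as the nonexistence of such a homomorphism. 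Your approach buys the geometrization of $M$ as a byproduct; the combinatorial one is more elementary and is what the original reference rests on.

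Three small repairs you should make. First, Selberg's lemma produces a torsion-free finite-index subgroup that need not be normal; pass to its normal core before defining $\Gamma$. Second, effectiveness of the $\Gamma$-action on $Y$ is not quite the one-liner you state: if $g$ induces the identity on $\widehat{M}/K$ then $\widehat{M}=\bigcup_{k\in K}\mathrm{Fix}(k^{-1}g)$, and one concludes $g\in K$ because a countable union of nowhere dense fixed-point sets cannot cover $\widehat{M}$ (Baire), not because $g$ ``lifts into $K$.'' Third, a point of the statement itself: your identification of the bad orbifolds (teardrop, and spindle with $\alpha_1\neq\alpha_2$) is the mathematically correct one, which means the hypothesis as transcribed in the paper --- allowing $s=2$ with $\alpha_1\neq\alpha_2$ when $g=0$ --- has the inequality inverted relative to \cite{FuSt}; the good spindles are exactly those with $\alpha_1=\alpha_2$.
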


\begin{thm}[Theorem 1.3 in \cite{FuSt}]\label{702}
If the compact oriented $V$-manifold $M$ has the form $Y / \Gamma$, then there is a bijective correspondence between isomorphism classes of complex $V$-bundles over $M$ and equivariant isomorphism classes of complex vector bundles over $Y$ with a $\Gamma$-action.
\end{thm}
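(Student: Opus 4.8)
The plan is to construct the correspondence explicitly in both directions and verify that the two constructions are mutually inverse, reducing everything to a local analysis on orbifold charts. Write $\pi \colon Y \to X = Y/\Gamma$ for the quotient map. Since $\Gamma$ acts effectively on the compact Riemann surface $Y$, every point stabilizer is cyclic and $\pi$ is a $\Gamma$-Galois branched covering: over the complement of the marked points it is an étale principal $\Gamma$-bundle, while over each marked point $x_i$ the stabilizer of a chosen preimage $y_i$ is exactly the cyclic group $\Gamma_i = \langle \sigma_i \rangle$, whose action on a coordinate disc $D^k$ about $y_i$ realizes the orbifold chart $\phi_i \colon U_i \to D^k/\sigma_i$. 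This identification of the branch data of $\pi$ with the $V$-manifold charts of $M$ is the geometric input that makes both functors transparent.

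First I would define the pullback functor. Given a $V$-bundle $\mathcal{E}$ on $M$, form $\pi^{*}\mathcal{E}$ on $Y$. Because $Y$ is an honest manifold, $\pi^{*}\mathcal{E}$ is an ordinary holomorphic vector bundle, and since $\pi \circ \gamma = \pi$ for every $\gamma \in \Gamma$, the canonical isomorphisms $\gamma^{*}\pi^{*}\mathcal{E} \cong \pi^{*}\mathcal{E}$ assemble into a $\Gamma$-equivariant structure. Locally, over a chart $\phi_i$ the $V$-bundle is modeled on $(D^k \times \mathbb{C}^{l})/(\sigma_i \times \tau_i)$, and pulling back along $D^k \to D^k/\sigma_i$ recovers $D^k \times \mathbb{C}^{l}$ with $\Gamma_i$ acting on fibers precisely through the isotropy representation $\tau_i$; thus the equivariant structure on $\pi^{*}\mathcal{E}$ is literally encoded by the collection $\{\tau_i\}$. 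Conversely, I would define the descent functor. Given a $\Gamma$-equivariant bundle $E$ on $Y$, over the free locus $X^{\circ} = X \setminus \{x_1,\dots,x_s\}$ the quotient of $E$ descends to a genuine vector bundle on $X^{\circ}$ by the classical equivalence $\mathrm{Bun}^{\Gamma}(U) \simeq \mathrm{Bun}(U/\Gamma)$ for a free $\Gamma$-action. Near a marked point $x_i$ the fiber $E_{y_i}$ carries a representation of $\Gamma_i$; taking this as the isotropy datum $\tau_i$ and combining it with the $\Gamma_i$-action on the coordinate disc produces the $V$-bundle chart $(D^k \times \mathbb{C}^{l})/(\sigma_i \times \tau_i)$, and gluing these local charts to the descended bundle on $X^{\circ}$ yields a $V$-bundle on $M$. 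That the two functors are mutually inverse is then checked chart by chart: the unramified part is the classical free-quotient equivalence, and the ramified part matches by the explicit bookkeeping of the $\tau_i$.

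The main obstacle is the local model at the fixed points. One must show that a $\Gamma_i$-equivariant holomorphic bundle on a disc $D^k$ with $y_i$ fixed is, up to equivariant isomorphism, completely determined by the single representation $\tau_i$ of $\Gamma_i$ on the fiber $E_{y_i}$; equivalently, that such a bundle is equivariantly trivial over $D^k$ with the linearized action $\sigma_i \times \tau_i$, so that no higher-order gluing invariants survive. This is an equivariant linearization statement: a disc is holomorphically (indeed equivariantly) contractible to its center, so the bundle is holomorphically trivial, and because $\Gamma_i$ is finite one may average a local frame over the group to produce a $\Gamma_i$-equivariant trivialization in which the action is exactly $\sigma_i \times \tau_i$. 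Once this linearization is established, the bijection on isomorphism classes follows from the chart-wise matching of the $\tau_i$ with the $V$-bundle isotropy representations, and the same local comparison shows the correspondence is compatible with direct sums, duals and tensor products, which is what is needed for the applications in the following sections.
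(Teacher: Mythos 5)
The paper offers no proof of this statement to compare with: Theorem \ref{702} is quoted verbatim from Furuta and Steer (Theorem 1.3 in \cite{FuSt}) and used as a black box throughout \S 6--7. Judged on its own, your argument is correct and is essentially the standard proof of this equivalence (the same mechanism underlying \cite{FuSt}): pass from $V$-bundles to $\Gamma$-equivariant bundles by reading off the local uniformizing data (your pullback functor), descend equivariant bundles over the free locus by the classical equivalence for free quotients, and reduce the ramified part to the equivariant linearization lemma, which you correctly identify as the crux and correctly prove by averaging a trivialization over the finite cyclic stabilizer. Two minor points should be made explicit in a full write-up. First, the averaged trivialization $\psi=\frac{1}{|\Gamma_i|}\sum_{\gamma\in\Gamma_i}\tau_i(\gamma)^{-1}\circ\phi\circ\rho(\gamma)$ agrees with $\phi$ only on the fiber over the fixed point, so it is invertible merely on a possibly smaller invariant disc; one must shrink the chart, which is harmless since orbifold charts may be refined. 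Second, your descent construction picks one preimage $y_i$ of each $x_i$ and works with the $\Gamma_i$-equivariant bundle on a disc around it; to get a statement about the full $\Gamma$-action one should invoke the induction equivalence between $\Gamma$-equivariant bundles on the orbit $\Gamma\times_{\Gamma_i}D^k\subset Y$ and $\Gamma_i$-equivariant bundles on $D^k$, which also shows the construction is independent of the choice of $y_i$ up to the conjugation ambiguity you implicitly allow.
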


We now define $V$-cohomology as follows. Recall that $M$ is a union of $Y \backslash \{x_1,...,x_s\}$ and $\coprod U_i$, where $U_i = D_i/ \mathbb{Z}_{\alpha_i}$, $1 \leq i \leq s$. Then $M_V$ is defined as the union of $Y \backslash \{x_1,...,x_s\}$ and $\coprod E \mathbb{Z}_{\alpha_i} \times_{\mathbb{Z}_{\alpha_i}}D_i$.

\begin{defn}\label{703}
The $V$-cohomology $H^{*}_V(M)$ is defined as the cohomology
\begin{align*}
H^{*}_V(M)=H^{*}(M_V).
\end{align*}
\end{defn}
The following theorem is the basic tool in order to calculate the cohomology group $H^{*}_V(M)$:
\begin{thm}[Theorem 2.2 in \cite{FuSt}]\label{704}
We have the following isomorphism about the first $V$-cohomology group
\begin{align*}
H^1_{V}(M,\mathbb{Z}) \cong H^1(M, \mathbb{Z}).
\end{align*}
\end{thm}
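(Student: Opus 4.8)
The plan is to read the right-hand side $H^1(M,\mathbb{Z})$ as the integral cohomology of the underlying topological surface $\widetilde{M}=X$, and to prove that the natural coarsening map $q\colon M_V\to\widetilde{M}$ (the identity on $X\setminus\{x_1,\dots,x_s\}$, and on each orbifold chart the projection $E\mathbb{Z}_{\alpha_i}\times_{\mathbb{Z}_{\alpha_i}}D_i\to D_i/\mathbb{Z}_{\alpha_i}\cong D_i$) induces an isomorphism $q^*\colon H^1(\widetilde{M},\mathbb{Z})\to H^1_V(M,\mathbb{Z})$. By construction $M_V$ and $\widetilde{M}$ agree away from the marked points and differ only in that each orbifold disk $U_i=D_i/\mathbb{Z}_{\alpha_i}$ has been replaced by its Borel construction, so the entire content of the theorem is local near the $x_i$.

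The key local computation is the following. Since the rotation action of $\mathbb{Z}_{\alpha_i}$ on the disk $D_i$ is equivariantly contractible to its fixed center, the Borel construction satisfies $E\mathbb{Z}_{\alpha_i}\times_{\mathbb{Z}_{\alpha_i}}D_i\simeq B\mathbb{Z}_{\alpha_i}$, while the free action on the punctured disk $D_i^{*}$ gives $E\mathbb{Z}_{\alpha_i}\times_{\mathbb{Z}_{\alpha_i}}D_i^{*}\simeq D_i^{*}/\mathbb{Z}_{\alpha_i}\simeq S^1$. I would then invoke $H^0(B\mathbb{Z}_{\alpha_i},\mathbb{Z})=\mathbb{Z}$ and, crucially, $H^1(B\mathbb{Z}_{\alpha_i},\mathbb{Z})=\mathrm{Hom}(\mathbb{Z}_{\alpha_i},\mathbb{Z})=0$, the finite cyclic group admitting no nontrivial map to $\mathbb{Z}$. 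This vanishing is exactly where the hypotheses ``degree one'' and ``integer coefficients'' enter: the integral first cohomology is blind to the finite cyclic isotropy, whereas $\mathbb{Z}_2$-coefficients or higher degrees would detect it, as the subsequent sections exploit.

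With this in hand I would run two Mayer--Vietoris sequences in parallel and compare them via $q$. Cover $M_V$ by $A$, a neighborhood of $X\setminus\{x_1,\dots,x_s\}$, and $B=\coprod_i E\mathbb{Z}_{\alpha_i}\times_{\mathbb{Z}_{\alpha_i}}D_i$, with $A\cap B\simeq\coprod_i S^1$; cover $\widetilde{M}$ by $A'=X\setminus\{x_1,\dots,x_s\}$ and $B'=\coprod_i D_i$, with $A'\cap B'\simeq\coprod_i S^1$. The map $q$ is a homotopy equivalence $A\to A'$ and $A\cap B\to A'\cap B'$, while both $H^1(B,\mathbb{Z})$ and $H^1(B',\mathbb{Z})$ vanish by the previous paragraph. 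On $H^0$ the restriction map $H^0(A)\oplus H^0(B)\to H^0(A\cap B)$ is surjective in both sequences (one checks it is $(a,(b_i))\mapsto(a-b_i)_i$, visibly onto $\mathbb{Z}^s$), so the connecting homomorphism into $H^1$ vanishes and each sequence collapses to $0\to H^1(-)\to H^1(A)\to H^1(A\cap B)$. A five-lemma comparison of these two short exact sequences, using that $q$ induces isomorphisms on all the flanking terms, yields $H^1_V(M,\mathbb{Z})=H^1(M_V,\mathbb{Z})\cong H^1(\widetilde{M},\mathbb{Z})$.

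The step requiring the most care is the identification of the homotopy types of $B$ and $A\cap B$ and the verification that $q$ really induces the claimed equivalences on the overlaps, so that the two Mayer--Vietoris sequences genuinely fit into a morphism of long exact sequences; the rest is formal. As an alternative, and as a sanity check matching the presentation $\pi^1_V(M)$ recorded in the introduction, one can argue purely on fundamental groups: by the universal coefficient theorem $H^1(Z,\mathbb{Z})\cong\mathrm{Hom}(\pi_1(Z),\mathbb{Z})$ for connected $Z$, and any homomorphism $\pi_1^{\mathrm{orb}}(M)\to\mathbb{Z}$ must send each torsion generator $\sigma_i$ (with $\sigma_i^{\alpha_i}=1$) to $0$, hence factors through the free abelian quotient $\mathbb{Z}^{2g}$ generated by the handle classes, giving $\mathrm{Hom}(\pi_1^{\mathrm{orb}}(M),\mathbb{Z})\cong\mathbb{Z}^{2g}\cong\mathrm{Hom}(\pi_1(X),\mathbb{Z})\cong H^1(X,\mathbb{Z})$.
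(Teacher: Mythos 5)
Your proposal is correct, but be aware that the paper contains no proof of this statement to compare against: Theorem \ref{704} is imported verbatim from Furuta--Steer (Theorem 2.2 of \cite{FuSt}) and used as a black box, so what you have written is a reconstruction rather than a parallel of an internal argument. Your reconstruction is sound, and it also correctly reads the right-hand side $H^1(M,\mathbb{Z})$ as the cohomology of the underlying surface $\widetilde{M}=X$ (the only non-tautological reading). The local inputs are all right: $E\mathbb{Z}_{\alpha_i}\times_{\mathbb{Z}_{\alpha_i}}D_i\simeq B\mathbb{Z}_{\alpha_i}$ by equivariant contractibility of the disk, $H^1(B\mathbb{Z}_{\alpha_i};\mathbb{Z})\cong\mathrm{Hom}(\mathbb{Z}_{\alpha_i},\mathbb{Z})=0$ by universal coefficients, and $E\mathbb{Z}_{\alpha_i}\times_{\mathbb{Z}_{\alpha_i}}D_i^{*}\simeq D_i^{*}/\mathbb{Z}_{\alpha_i}\simeq S^1$ because the action on the punctured disk is free (equivalently, the projection has contractible fibers); the surjectivity of $H^0(A)\oplus H^0(B)\to H^0(A\cap B)$ kills the connecting map, and comparing the two truncated Mayer--Vietoris sequences through $q$ identifies the kernels, which is all one needs. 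It is worth noting that your argument is precisely the integral-coefficient analogue of the computation the paper \emph{does} carry out in \S 7.3 with $\mathbb{Z}_2$-coefficients, using the same decomposition $M_V=V_1\cup V_2$ and the same overlap $\coprod_i S^1$; the only divergence is your key vanishing $H^1(B\mathbb{Z}_{\alpha_i};\mathbb{Z})=0$, versus $H^1(B\mathbb{Z}_2;\mathbb{Z}_2)=\mathbb{Z}_2$ in the paper's setting, which is exactly why the paper finds rank $2g+s-1$ over $\mathbb{Z}_2$ while integrally one only sees $H^1(X;\mathbb{Z})\cong\mathbb{Z}^{2g}$. Your alternative argument via $\mathrm{Hom}(\pi_V^1(M),\mathbb{Z})$ is also valid (torsion generators $\sigma_i$ must die in $\mathbb{Z}$), and is consistent with the paper's later identification $H^1_V(M,\mathbb{Z}_2)=\mathrm{Hom}(\pi_V^1(M),\mathbb{Z}_2)$, granted the standard fact $\pi_1(M_V)\cong\pi_V^1(M)$, which you should cite or prove if full rigor is wanted.
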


\subsection{Line V-bundles}
Let $M$ be an $V$-manifold. Line $V$-bundles are line bundles over the $V$-manifold $M$. Note that the $V$-manifold is also an orbifold. Thus the line $V$-bundles can be considered as line bundles over the orbifold.

\begin{defn}\label{705}
Under the tensor product, the topological isomorphism classes of line $V$-bundles form a group, which shall be denoted by $\mathrm{Pic}_V(M)$.
\end{defn}

The topological classification of the bundles on a $V$-Riemann surface is already done by M. Furuta and B. Steer \cite{FuSt}. Recall that there is a canonical line bundle $L_i$ at each point $x_i$ such that $L_i^{k_i}=\mathsf{\mathcal{O}}(x_i)$, and this bundle has isotropy $e^{2\pi \sqrt{-1} \frac{1}{k_i}}$ at each point $x_i$. Thus, if we have a $V$-bundle $L$ with isotropy $((\beta_1,k_1),\ldots,(\beta_s,k_s))$, we define the \textit{desingularization} $|L|$ to be \[|L|=L\otimes L_1^{-\beta_1}\otimes\ldots\otimes L_s^{-\beta_s}\]
and it turns out that these completely classify the line bundles topologically.

\begin{thm}[Proposition 1.4 in \cite{FuSt}]\label{706}
There is a bijective correspondence between isomorphism classes of complex V-bundles and isotropy classes $\sigma_1^{\beta_1},\ldots,\sigma_s^{\beta_s}$ over points $x_1,\dots,x_s$ respectively, as well as the first Chern class of a line bundle on the underlying manifold $\widetilde{M}$.  Thus
\[\mathrm{Pic}_V(M)=\mathrm{Pic}(\widetilde{M})\oplus \bigoplus_{i=1}^s \mathbb{Z}_{k_i}.\]
\end{thm}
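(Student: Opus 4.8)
The plan is to extract from a line $V$-bundle $L$ over $M$ two independent topological invariants --- its local isotropy data at the marked points and the topological type of its desingularization on $\widetilde{M}$ --- and to show that together they determine $L$ up to isomorphism. First I would record the \emph{isotropy invariant}: over each marked point $x_i$ the fibre of $L$ carries a linear action of the cyclic isotropy group $\Gamma_i=\langle\sigma_i\rangle\cong\mathbb{Z}_{k_i}$, and since this is a one-dimensional representation it is a character $\sigma_i\mapsto e^{2\pi\sqrt{-1}\,\beta_i/k_i}$ for a unique $\beta_i\in\mathbb{Z}/k_i\mathbb{Z}$. Assigning to $L$ the tuple $(\sigma_1^{\beta_1},\dots,\sigma_s^{\beta_s})$ defines a map $\mathrm{Pic}_V(M)\to\bigoplus_{i=1}^s\mathbb{Z}_{k_i}$; because tensoring line bundles multiplies the fibrewise characters, this map is a group homomorphism. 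The canonical bundles $L_i$, with isotropy $e^{2\pi\sqrt{-1}/k_i}$ at $x_i$ and trivial isotropy elsewhere, map to the standard generators, so the homomorphism is surjective.

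Next I would identify the kernel and assemble the bijection. A line $V$-bundle lies in the kernel precisely when its isotropy is trivial at every $x_i$; in each orbifold chart $D^k/\sigma_i$ the $\Gamma_i$-action on the total space is then trivial, so the bundle descends to an honest topological line bundle on the underlying surface $\widetilde{M}$. Conversely every line bundle on $\widetilde{M}$ pulls back to such a $V$-bundle, so the kernel is exactly $\mathrm{Pic}(\widetilde{M})$, which on a Riemann surface is detected by the first Chern class. The desingularization $|L|=L\otimes L_1^{-\beta_1}\otimes\cdots\otimes L_s^{-\beta_s}$ has, by construction, trivial isotropy at each point and hence lies in $\mathrm{Pic}(\widetilde{M})$; it furnishes a set-theoretic section of the isotropy map. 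Sending $L\mapsto(|L|,\beta_1,\dots,\beta_s)$ and reconstructing $L=|L|\otimes L_1^{\beta_1}\otimes\cdots\otimes L_s^{\beta_s}$ are mutually inverse, which yields the claimed bijective correspondence between isomorphism classes of line $V$-bundles and pairs consisting of a Chern class on $\widetilde{M}$ together with an isotropy tuple.

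The delicate point --- and where I expect the real work to lie --- is the group-theoretic formulation $\mathrm{Pic}_V(M)=\mathrm{Pic}(\widetilde{M})\oplus\bigoplus_i\mathbb{Z}_{k_i}$, because desingularization is \emph{not} literally additive. Indeed $L_i^{k_i}=\mathcal{O}(x_i)$ is a nontrivial bundle on $\widetilde{M}$, so when two isotropy weights sum past $k_i$ a ``carry'' term appears and $|L\otimes L'|=|L|\otimes|L'|\otimes\bigotimes_i\mathcal{O}(x_i)^{\varepsilon_i}$ with $\varepsilon_i\in\{0,1\}$. Thus the honest structure is the short exact sequence $0\to\mathrm{Pic}(\widetilde{M})\to\mathrm{Pic}_V(M)\to\bigoplus_i\mathbb{Z}_{k_i}\to 0$, and the stated equality is the resulting bijective classification of isomorphism classes. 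I would therefore establish the decomposition at the level of the bijection constructed above --- using the section given by the $L_i$ and carefully bookkeeping these carries --- rather than by asserting a canonical additive splitting of the sequence, which is exactly the subtlety one must not gloss over.
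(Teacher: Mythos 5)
Your proof is correct and follows essentially the same route as the paper's sketch: classify a line $V$-bundle $L$ by the pair consisting of $c_1(|L|)$ and its isotropy tuple, using the canonical bundles $L_i$ and the desingularization to produce the two mutually inverse maps. Your closing caveat is a genuine refinement rather than pedantry: the paper's equation $\mathrm{Pic}_V(M)=\mathrm{Pic}(\widetilde{M})\oplus\bigoplus_i\mathbb{Z}_{k_i}$ holds only as the bijective classification you construct, not as a canonical group splitting, since already for $s=1$, $k_1=2$ the pullback map identifies the extension $0\to\mathrm{Pic}(\widetilde{M})\to\mathrm{Pic}_V(M)\to\mathbb{Z}_2\to 0$ with $0\to\mathbb{Z}\xrightarrow{\cdot 2}\mathbb{Z}\to\mathbb{Z}_2\to 0$ (as $\pi^*\mathcal{O}(x_1)=L_1^{2}$ and $\mathrm{Pic}_V(M)$ is generated by $L_1$), which does not split.
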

The idea of the proof is the following: We recall that on an ordinary Riemann surface, two line bundles $L_1$ and $L_{2}$ are topologically equivalent, if and only if $c_1(L_1)=c_1(L_2)$. Now we can use the desingularization to define line bundles $|L_1|$ and $|L_2|$ over $\widetilde{M}$. The line $V$-bundles are equivalent if and only if $c_1(|L_1|)=c_1(|L_2|)$ and their isotropy classes coincide under some trivialization.

The class $(c_1(|L|),(\beta_1,k_1),\ldots,(\beta_s,k_s))$ is called the \textit{Seifert invariant} of this bundle. Note that this invariant depends on the local trivialization of each neighborhood.

We have more invariants if we would like to classify the holomorphic bundle instead of the topological bundle. From the classical Narasimhan-Seshadri correspondence, there is a correspondence between unitary representations of the fundamental group and rank $n$ polystable bundles $E$ with trivial first Chern class and trivial $c_2(E)\cdot c_1(L)^{n-1}$ with the polystable Higgs bundle, where $L$ is an ample line bundle on a K\"{a}hler manifold. I. Biswas and A. Hogadi in \cite{BiHo} generalized this correspondence for a compact orbifold of any dimension and any rank:

\begin{thm}[Theorem 1.2 in \cite{BiHo}]\label{707}
Let $M$ be a complex projective orbifold of dimension $n$ and $E$ a vector bundle over $X$ with $L$ an ample line bundle. Then $E$ is polystable with respect to $L$ if and only if it corresponds to a unitary representation of an orbifold line bundle.
\end{thm}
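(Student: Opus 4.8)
The plan is to reduce the statement to the smooth Narasimhan--Seshadri--Donaldson--Uhlenbeck--Yau correspondence applied equivariantly on a covering manifold, rather than developing gauge theory directly on the orbifold. By Kawamata's covering construction \cite{Kawa}, together with the global-quotient description used throughout this section, we may present $M$ as $[Y/\Gamma]$ for a smooth projective $Y$ and a finite group $\Gamma$, and Theorem \ref{702} identifies orbifold bundles on $M$ with $\Gamma$-equivariant bundles on $Y$. Denote by $\widetilde{E}$ the $\Gamma$-equivariant bundle on $Y$ corresponding to $E$, and pull $L$ back to a $\Gamma$-invariant ample class. The first step is to check that orbifold polystability of $E$ (with respect to $L$) is equivalent to $\Gamma$-equivariant polystability of $\widetilde{E}$. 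This is essentially formal: every $\Gamma$-invariant saturated subsheaf of $\widetilde{E}$ descends to an orbifold subsheaf of $E$ and conversely, while the identity $\deg_{orb}(E)=\tfrac{1}{|\Gamma|}\int_Y c_1(\widetilde{E})$ shows that the orbifold slope and the normalized $\Gamma$-equivariant slope agree, so the two stability inequalities coincide.

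The easy direction is that a bundle arising from a unitary representation is polystable. A unitary representation of the orbifold fundamental group $\pi_1^{orb}(M)$ produces, via the exact sequence $1\to\pi_1(Y)\to\pi_1^{orb}(M)\to\Gamma\to 1$, a flat unitary $\Gamma$-equivariant bundle on $Y$, hence a $\Gamma$-invariant flat Hermitian metric on $\widetilde{E}$. A flat unitary connection is in particular Hermitian--Yang--Mills with vanishing central curvature, and the standard Chern--Weil subsheaf argument then forces every destabilizing subsheaf to split off as a flat subbundle of the same slope; thus $\widetilde{E}$ is $\Gamma$-equivariantly polystable, and by the first step $E$ is orbifold polystable.

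For the converse I would invoke the Donaldson--Uhlenbeck--Yau theorem on the smooth projective $Y$: a polystable $\widetilde{E}$ admits a Hermitian--Yang--Mills metric with respect to the K\"ahler class $c_1(L)$, unique up to positive scalars on the stable summands. Since the HYM equation is natural under the $\Gamma$-action and the equivariant structure identifies $\gamma^*\widetilde{E}$ with $\widetilde{E}$, each translate $\gamma^*h$ is again HYM; for a stable bundle uniqueness gives $\gamma^*h=\lambda_\gamma h$ with $\lambda_\gamma>0$ defining a homomorphism $\Gamma\to\mathbb{R}_{>0}$, which is trivial because $\Gamma$ is finite, so $h$ is automatically $\Gamma$-invariant (in the polystable case one treats the stable summands and the permutation action of $\Gamma$ on isomorphic summands separately). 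The vanishing Chern-class hypotheses, namely $c_1=0$ and $c_2\cdot c_1(L)^{n-1}=0$, combined with the Bogomolov--L\"ubke type identity force the HYM curvature to vanish identically, so the connection is flat and unitary. Its holonomy, together with the equivariant structure, defines a unitary representation of $\pi_1^{orb}(M)$ whose associated orbifold bundle is $E$.

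The main obstacle is the converse direction, and specifically two points that must be handled with care. First, one must ensure the HYM metric can be chosen $\Gamma$-invariant; this rests on the uniqueness of the HYM metric on a stable bundle and on a clean decomposition of the polystable $\widetilde{E}$ into stable summands compatibly with the (possibly permuting) $\Gamma$-action. Second, one needs the passage from \emph{Hermitian--Yang--Mills} to \emph{flat}, which is exactly where the Chern-number vanishing enters: an HYM connection with $c_1=0$ and $c_2\cdot[\omega]^{n-1}=0$ has identically vanishing curvature. The remaining bookkeeping---matching orbifold degrees with equivariant degrees and identifying the holonomy as a representation of $\pi_1^{orb}(M)$ rather than merely of $\pi_1(Y)$---is routine given the covering description in Theorem \ref{702}.
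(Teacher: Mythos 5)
The first thing to observe is that the paper contains no proof of this statement: Theorem \ref{707} is imported verbatim from Biswas--Hogadi \cite{BiHo}, so your proposal can only be judged against what a proof would require in the stated generality. Judged that way, your opening reduction has a genuine gap. You present $M$ as a global quotient $[Y/\Gamma]$ with $Y$ smooth projective and $\Gamma$ finite, citing Kawamata's covering lemma and Theorem \ref{702}; but the theorem concerns an \emph{arbitrary} complex projective orbifold of dimension $n$, and such an orbifold need not be a global quotient. Kawamata's construction \cite{Kawa}, as used in \cite{Biswas2}, only removes orbifold structure supported along divisors (cyclic isotropy in codimension one); it does nothing for isotropy in codimension $\ge 2$, and Theorem \ref{702} \emph{assumes} the form $Y/\Gamma$ rather than producing it. Concretely, for any global quotient $[Y/\Gamma]$ every local stabilizer injects into $\pi_1^{\mathrm{orb}}$ (restrict to a small invariant ball and compose with $\pi_1^{\mathrm{orb}}([Y/\Gamma])\to\Gamma$), whereas the teardrop --- the weighted projective line $\mathbb{P}(1,2)$, i.e.\ $\mathbb{P}^1$ with a single $\mathbb{Z}_2$-point --- has trivial orbifold fundamental group by van Kampen (its smooth locus $\mathbb{C}$ is simply connected) yet a nontrivial stabilizer, so it admits no such presentation; this is exactly why Theorem \ref{701} carries hypotheses on $s$. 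The same phenomenon occurs in every dimension, e.g.\ for $\mathbb{P}(1,1,2)$, whose smooth locus retracts onto a rational curve. For such $M$ the entire equivariant Donaldson--Uhlenbeck--Yau argument downstream has nothing to run on: a proof in the stated generality must work directly on the orbifold (Hermitian--Yang--Mills theory in orbifold charts), which is the setting of the cited reference.

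In the only situation where this paper actually invokes Theorem \ref{707} --- orbifold Riemann surfaces satisfying the hypotheses of Theorem \ref{701}, which \emph{are} global quotients --- your route is the standard one and essentially correct, with two caveats. First, your Step 1 yields $\Gamma$-equivariant polystability of $\widetilde{E}$, but to invoke DUY on $Y$ you need ordinary polystability; the upgrade is true but needs the remark that the maximal destabilizing subsheaf and the socle are canonical, hence automatically $\Gamma$-invariant. Second, you rightly reinstate the Chern-class hypotheses, which the paper's quoted statement drops and without which the forward implication is false ($\mathcal{O}_{\mathbb{P}^1}(1)$ is stable but carries no flat unitary structure); note only that the relevant vanishing is $c_2(E)\cdot c_1(L)^{n-2}=0$ --- the exponent $n-1$ is dimensionally wrong, a slip inherited from the paper.
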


Thus, in particular, in the case of a line bundle over an orbifold Riemann surface, the stability condition is trivial. We know that $\mathrm{Pic}^{0}_{V}(M):=\mathrm{Hom}(\pi_V^1(M),\mathbb{C}^*)$. We see that in the case of $s$-many marked points, since $\mathbb{C}^*$ is commutative, it is
\[\mathrm{Pic}^0_V(M):=\mathrm{Hom}(\langle a_i,b_i, \sigma_i|\sigma_i^{k_i}=1,\prod_{k=1}^s \sigma_i=1\rangle,U(1))\cong(S^1)^{2g}\times\frac{\bigoplus_{i=1}^s\mathbb{Z}_{k_i}}{(1,1,\ldots,1)}.\]
We deduce that in our case, that is, considering the trivial parabolic structure with weight $\frac{1}{2}$ over each point in the divisor $D$, in other words, with $\mathbb{Z}_2$-isotropy at the $s$-many marked points on the genus $g$ Riemann surface $M$, we have the identification
\[\mathrm{Pic}^0_V(M)\cong(S^1)^{2g}\oplus \mathbb{Z}_2^{s-1}.\]
For bundles of higher degree, we can get a degree $0$ bundle by tensoring with a degree $-d$ bundle, thus this also reduces to the degree $0$ case as the stability condition is trivial. We finally imply the following:

\begin{prop}\label{708}
Let $X$ be a Riemann surface with genus $g$. Denote by $M$ the $V$-manifold with $s$-many marked points $x_1,...,x_s$, around which the isotropy group is $\mathbb{Z}_2$, such that $X$ is the underlying surface of $M$. Let $\pi: M \rightarrow X$ be the natural map. Given any line bundle $L$ over $X$, the line bundle $\pi^* L$ has $2^{2g+s-1}$ many square roots over $M$. In particular, let $K$ be the canonical line bundle over $X$. The line $V$-bundle $\pi^* K$ over $M$ has $2^{2g+s-1}$ many square roots over $M$.
\end{prop}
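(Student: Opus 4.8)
The plan is to count square roots by exhibiting the set of them as a torsor over the $2$-torsion subgroup $\mathrm{Pic}_V(M)[2]$, and then to read off the order of this subgroup from the identification $\mathrm{Pic}^0_V(M)\cong(S^1)^{2g}\oplus\mathbb{Z}_2^{s-1}$ established just above. First I would record the torsor structure: if $R$ and $R'$ both satisfy $R^{\otimes 2}\cong\pi^*L\cong R'^{\otimes 2}$, then $R\otimes R'^{-1}$ is $2$-torsion, and conversely, for any $T$ with $T^{\otimes 2}\cong\mathcal{O}$ the bundle $R\otimes T$ is again a square root. Hence, \emph{provided one square root exists}, the number of square roots of $\pi^*L$ equals $\lvert\mathrm{Pic}_V(M)[2]\rvert$. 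Moreover any $2$-torsion line $V$-bundle $T$ has orbifold degree $0$, since $2\deg T=\deg(T^{\otimes 2})=0$; therefore $\mathrm{Pic}_V(M)[2]=\mathrm{Pic}^0_V(M)[2]$, and it suffices to count the $2$-torsion of the degree-zero orbifold Picard group.

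Next I would settle existence. Using the canonical orbifold line bundles $L_i$ at $x_i$ with $L_i^{\otimes 2}=\mathcal{O}(x_i)$ from Theorem \ref{706}, choose isotropy exponents $\beta_i\in\{0,1\}$ with $\sum_i\beta_i\equiv\deg L\pmod 2$, which is possible because $s\ge 1$. Then $L\otimes\mathcal{O}_X\!\left(-\sum_i\beta_i x_i\right)$ has even degree, hence admits an honest square root $N$ on $X$, and the $V$-bundle $\pi^*N\otimes\bigotimes_i L_i^{\beta_i}$ squares to $\pi^*L$. For the canonical case $L=K$ one has $\deg K=2g-2$ even, so all $\beta_i=0$ and any theta characteristic $N$ with $N^{\otimes 2}=K$ pulls back to a square root of $\pi^*K$. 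This same computation exposes what I expect to be the main subtlety: squaring a $V$-bundle with isotropy $(\beta_i)$ contributes $\bigotimes_i L_i^{2\beta_i}\cong\mathcal{O}\!\left(\sum_i\beta_i x_i\right)$ of degree $\sum_i\beta_i$ to the desingularization, so a naive splitting $\mathrm{Pic}_V(M)\cong\mathrm{Pic}(X)\oplus\mathbb{Z}_2^{s}$ would overcount and predict $2^{2g+s}$ square roots.

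Finally I would compute the $2$-torsion. By Theorem \ref{707} the degree-zero line $V$-bundles are exactly the unitary characters of $\pi_V^1(M)$, which yields the identification
\[\mathrm{Pic}^0_V(M)\cong(S^1)^{2g}\oplus\mathbb{Z}_2^{s-1}\]
recorded above; here the factor $\mathbb{Z}_2^{s-1}$, rather than $\mathbb{Z}_2^{s}$, is precisely the imprint of the relation $\sigma_1\cdots\sigma_s=1$ in $\pi_V^1(M)$, equivalently the parity constraint $\sum_i\beta_i\equiv 0$ forced on the isotropy of a degree-zero bundle in the previous paragraph. Taking $2$-torsion gives $(S^1)^{2g}[2]\oplus\mathbb{Z}_2^{s-1}\cong\mathbb{Z}_2^{2g}\oplus\mathbb{Z}_2^{s-1}=\mathbb{Z}_2^{2g+s-1}$, of order $2^{2g+s-1}$. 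Combining this with the torsor structure and the existence of at least one square root yields the stated count of $2^{2g+s-1}$ square roots for both $\pi^*L$ and $\pi^*K$. The one point requiring care throughout, and the source of the $-1$ in the exponent, is thus the orbifold relation $\prod_i\sigma_i=1$; everything else is the standard Kummer-type torsor argument.
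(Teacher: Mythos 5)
Your proof is correct and follows essentially the same route as the paper: both arguments count square roots of $\pi^*L$ via the $2$-torsion of the degree-zero orbifold Picard group, using the identification $\mathrm{Pic}^0_V(M)\cong(S^1)^{2g}\oplus\mathbb{Z}_2^{s-1}$ coming from unitary characters of $\pi_V^1(M)$, so that the count is $2^{2g}\cdot 2^{s-1}=2^{2g+s-1}$. The only difference is that you spell out the Kummer torsor structure and the existence of one square root (via $\pi^*N\otimes\bigotimes_i L_i^{\beta_i}$, with the parity choice of the $\beta_i$), steps which the paper leaves implicit in the discussion preceding the proposition.
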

Let $L$ be a line bundle over $X$. Sometimes we abuse the notation $L$, to mean the corresponding line $V$-bundle $\pi^* L$ over $M$ in the rest of the paper.

\subsection{Calculations in orbifold cohomology}
We consider the following special $V$-manifold
\begin{align*}
& M=U_1 \bigcup U_2, & U_1 = X \backslash \{x_1,...,x_s\}, && U_2 = \coprod_{i=1}^s D / \mathbb{Z}_2,\\ & M_V=V_1 \bigcup V_2, & V_1 = X \backslash \{x_1,...,x_s\}, && V_2 = \coprod_{i=1}^s D\times_{\mathbb{Z}_2} E\mathbb{Z}_2,
\end{align*}
where $D$ is a disk around the punctures $x_{i}$ and $X$ is a compact Riemann surface of genus $g$. We only calculate the rank of the $V$-cohomology group $H^{*}_V(M)$ with coefficients $\mathbb{Z}_2$.

By the Mayer-Vietoris sequence, we have
\begin{align*}
0 \rightarrow H^0(M_V) \rightarrow H^0(V_1) \bigoplus H^{0}(V_2) \rightarrow H^0(V_1\bigcap V_2)\\
\xrightarrow[]{j_1}  H^1(M_V) \rightarrow H^1(V_1) \bigoplus H^{1}(V_2) \rightarrow H^1(V_1\bigcap V_2)\\
\xrightarrow[]{j_2} H^2(M_V) \rightarrow H^2(V_1) \bigoplus H^{2}(V_2) \rightarrow H^2(V_1\bigcap V_2).\\
\end{align*}

\begin{enumerate}
\item[(1)] Clearly, $V_1\bigcap V_2=\prod_{i=1}^s S^1$. We have
\begin{align*}
& \mathrm{rk}(H^0(V_1\bigcap V_2))=s,\\
& \mathrm{rk}(H^1(V_1\bigcap V_2))=s,\\
& \mathrm{rk}(H^2(V_1\bigcap V_2))=0.\\
\end{align*}

\item[(2)] For the cohomology group of $V_1=X \backslash \{p_1,...,p_s\}$ we check that
\begin{align*}
\mathrm{rk}(H^0(V_1))=1,\\
\mathrm{rk}(H^1(V_1))=2g+s-1,\\
\mathrm{rk}(H^2(V_1))=0.\\
\end{align*}

\item[(3)] We use the Leray spectral sequence to calculate the cohomology group of $V_2$. We have the following fibration
\begin{align*}
B\mathbb{Z}_2 \rightarrow D\times_{\mathbb{Z}_2} E\mathbb{Z}_2 \rightarrow D,
\end{align*}
where $B\mathbb{Z}_2$ is the classifying space of $\mathbb{Z}_2$ and $E\mathbb{Z}_2$ is the universal bundle over $B\mathbb{Z}_2$. By Leray spectral sequence, we have
    \begin{align*}
    H^{*}(B\mathbb{Z}_2,H^{*}(D)) \Rightarrow H^{*}(D\times_{\mathbb{Z}_2} E\mathbb{Z}_2).
    \end{align*}
We know
\begin{align*}
    H^{i}(D)=
    \begin{cases}
    \mathbb{Z}, \quad i=0\\
    0, \quad \mathrm{otherwise.}
    \end{cases}
    \end{align*}
    Hence, we have
    \begin{align*}
    H^{i}(D\times_{\mathbb{Z}_2} E\mathbb{Z}_2)=
    \begin{cases}
    \mathbb{Z}_2, \quad i=0,1,2\\
    0, \quad \mathrm{otherwise,}
    \end{cases}
\end{align*}
where $H^{i}(D\times_{\mathbb{Z}_2} E\mathbb{Z}_2)$ is the $\mathbb{Z}_2$-cohomology. Based on the calculation above, we have
\begin{align*}
    \mathrm{rk}(H^0(V_2))=s,\\
    \mathrm{rk}(H^1(V_2))=s,\\
    \mathrm{rk}(H^2(V_2))=s.\\
\end{align*}
\end{enumerate}

From the calculation of the ranks of the cohomology groups, we induce that the map $j_1$ in the Mayer-Vietoris sequence is injective, while the map $j_2$ is an isomorphism. Since $M_V$ is connected, it is implied that $\mathrm{rk}(H^{0}(M_V))=1$. The Mayer-Vietoris sequence now provides that
\begin{align*}
& \mathrm{rk}(H^1(M_V))=2g+s-1,\\
& \mathrm{rk}(H^2(M_V))=s.\\
\end{align*}

\subsection{Topological Invariants of Parabolic $\mathrm{Sp}(2n,\mathbb{R})$-Higgs Bundles in $\mathcal{M}_{par}^{max}(\mathrm{Sp}(2n,\mathbb{R}))$}
In this subsection, we study the topological invariants of parabolic $\mathrm{Sp}(2n,\mathbb{R})$-Higgs bundles in $\mathcal{M}_{par}^{max}(\mathrm{Sp}(2n,\mathbb{R}))$. For a parabolic Higgs bundle $(E,\Phi) \in \mathcal{M}_{par}^{max}(\mathrm{Sp}(2n,\mathbb{R}))$, the denominator of its parabolic weight $\alpha(x)$ is $2$, for every $x \in D$ (with the same notation as in Definition \ref{201}). In other words, the parabolic weight $\alpha(x)$ can be either $0$ or $\frac{1}{2}$. We want to remind the reader one more time that the denominator $2$ of the weight corresponds to the group action $\mathbb{Z}_2$ around the punctures in $D$; if the denominator is $n$, then the group action is $\mathbb{Z}_n$. To study the topological invariants of parabolic Higgs bundles, we consider parabolic Higgs bundles as Higgs $V$-bundles under the correspondence we studied in \S 6. Thus we take the topological invariants of the corresponding Higgs $V$-bundles to define invariants for parabolic Higgs bundles. In this subsection, we slightly abuse terminology between parabolic Higgs bundles and Higgs $V$-bundles; when we discuss topological invariants this should always refer to the Higgs $V$-bundles.

We first study the $\mathrm{Sp}(4,\mathbb{R})$-case. Recall that the definition of a parabolic $\mathrm{Sp(4}\mathrm{,}\mathbb{R}\mathrm{)}$-Higgs bundle over $X$ with divisor $D=\{x_1,...,x_s\}$ involves a pair $(E,\Phi)$, where $E=V\oplus {{V}^{\vee }}$ is a rank 4 parabolic vector bundle over $X$ and $\Phi =\left( \begin{matrix}
   0 & \beta   \\
   \gamma  & 0  \\
\end{matrix} \right): E \rightarrow E \otimes K(D)$ is a parabolic Higgs field. From Proposition 5.4, the maximal parabolic degree of the parabolic vector bundle $V$ is $2g-2+s$. In this maximal case, the proof of Proposition 5.4 implies that $\gamma$ is an isomorphism.

Fix a square root $L_0$ of $K(D)$ (as a line $V$-bundle) and define $W:=V \otimes L^{-1}_0$. Clearly, we have
\begin{align*}
& c=\gamma \otimes 1_{L^{-1}_0} : W=V \otimes L^{-1}_0 \rightarrow V^{\vee} \otimes K(D) \otimes L^{-1}_0=W^{\vee},\\
& \phi=(\beta \otimes 1_{L_0}) \circ (\gamma \otimes 1_{L^{-1}_0}): W=V\otimes L^{-1}_0 \rightarrow V \otimes L^{\frac{3}{2}}_0=W\otimes K(D)^2,
\end{align*}
where the first map $c$ is an isomorphism. Given a parabolic $\mathrm{Sp}(4,\mathbb{R})$-Higgs bundle with maximal degree $(E,\Phi)$, we consider the associated triple $(W,c,\phi)$ when we discuss the topological invariants. It is easy to check that the parabolic structure of $E$ uniquely determines the parabolic structure of $W$. Thus we use the same notation $\alpha$ for the parabolic structure of $W$.

From the correspondence between the orbifold bundle ($V$-bundle) and the parabolic bundle we studied in \S 6, the parabolic Higgs bundle $E=V\oplus {{V}^{\vee }}$ over $X$ is equivalent to a Higgs $V$-bundle over $M$, where $M$ is the $V$-manifold with $s$-many marked points $x_1,...,x_s$, around which the isotropy group is $\mathbb{Z}_2$, and $X$ is the underlying surface of $M$.

Under this correspondence, $c$ induces a quadratic form on the $V$-bundle $W$. Hence, the structure group of $W$ is $\mathrm{O}(2,\mathbb{C})$. Also, note that the $\mathrm{Sp}(4,\mathbb{R})$-Higgs bundle $E$ has the real structure. More precisely, $E$ can be written as $E=E_{\mathbb{R}} \otimes \mathbb{C}$, where $E_{\mathbb{R}}$ is a real vector bundle. Similarly, we can write $W$ as $W_{\mathbb{R}}\otimes \mathbb{C}$. Therefore the structure group of $W_{\mathbb{R}}$ is $\mathrm{O}(2)$. In parallel to the definition of a Stiefel-Whitney class, the corresponding class $w_1$ of $W_{\mathbb{R}}$ in $H_V^1(M, \mathbb{Z}_2)$ is a well-defined topological invariant for $W$. By Theorem 7.7 and the calculations in \S 7.3, we deduce that the number of different elements in $H_V^1(M, \mathbb{Z}_2)$ is $2^{2g+s-1}$.

An alternative description of this cohomology group is given by the fundamental group
\begin{align*}
H^1_V(M, \mathbb{Z}_2)=\mathrm{Hom}(\pi_V^1(M),\mathbb{Z}_2),
\end{align*}
where $\pi^{1}_V(M)$ is the $V$-fundamental group with presentation
\begin{align*}
\pi_V^1(M)=\{a_1,b_1,...,a_g,b_g,\sigma_1,...,\sigma_s \quad | \quad \sigma_1...\sigma_s[a_1,b_1]...[a_g,b_g]=1,\sigma_i^2=1, 1 \leq i \leq s\},
\end{align*}
where $a_1,b_1,...,a_g,b_g$ are generators of the underlying surface $X$ and $\sigma_i$ are represented by small loops around the points $x_i$, $1 \leq i \leq s$.

We discuss the topological invariants for maximal parabolic $\mathrm{Sp(4}\mathrm{,}\mathbb{R}\mathrm{)}$-Higgs bundles based on the cohomology group $H^1_V(M,\mathbb{Z}_2)$. Let $w_1 \in H^1_V(M,\mathbb{Z}_2)$ and $w_2 \in H^2_V(M,\mathbb{Z}_2)$. We distinguish the following cases:
\begin{enumerate}
\item[(1)] If $w_1 \neq 0$, every pair $(w_1,w_2)$ is a topological invariant for $W_{\mathbb{R}}$. Thus the pair $(w_1,w_2)$ can be considered as the topological invariant of $W$. The number of topological invariants in this case is $2^s(2^{2g+s-1}-1)$.
\item[(2)] If $w_1=0$, then the structure group can be reduced to $\mathrm{SO}(2, \mathbb{C}) \subset \mathrm{O}(2,\mathbb{C})$. From the identification $\mathrm{SO}(2,\mathbb{C}) \cong \mathbb{C}^*$, $W$ can be decomposed as the direct sum $W=L \bigoplus L^{\vee}$. Now, stability for the map $\phi:W \rightarrow W \otimes K(D)^2$, provides the existence of a non-trivial holomorphic map $L \rightarrow L^{\vee}\otimes K(D)^2$, therefore it holds necessarily that $\mathrm{pardeg}(L) \leq 2g-2+s$.
    \begin{enumerate}
    \item[a.] If $\mathrm{pardeg}(L)\neq 2g-2+s$, then every value of the parabolic degree gives a topological invariant and there are at least $2g-2+s$ different values. But the parabolic degree is not enough to give all possible topological invariants of $L$. Recall from the definition of parabolic degree that the parabolic degree $``\mathrm{pardeg}"$ can be written as the sum of the classical degree $``\deg"$ and the weight $``w"$, which is the sum of the weights over each point $x \in D$. Note that $w$ is uniquely determined by the parabolic structure of $L$. If we fix the parabolic structure of $L$, there are $2g-2+s$ many choices for the part of the classical degree $``\deg"$. At the same time, we have $2^s$ many choices for the parabolic structures of $L$. Thus the number of topological invariants in this case is $2^s(2g-2+s)$.
    \item[b.] If $\mathrm{pardeg}(L)=2g-2+s$, then $L^2 \cong K(D)^2$. This describes parabolic $\mathrm{Sp}(4,\mathbb{R})$-Higgs bundles $\left( E=V\oplus {{V}^{\vee }},\Phi  \right)$ with $V=N\oplus {{N}^{\vee }}K\left( D \right)$, for a line bundle  $N=K{{\left( D \right)}^{\frac{3}{2}}}$. Thus, square roots of $K\left( D \right)$ parameterize components containing such Higgs bundles, and this contributes to at least $2^{2g+s-1}$ topological invariants by Proposition \ref{708}.
    \end{enumerate}
\end{enumerate}

The discussion above implies our main theorem:

\begin{thm}\label{709}
The moduli space $\mathcal{M}_{par}^{max}(\mathrm{Sp}(4,\mathbb{R}))$ of maximal polystable parabolic $\mathrm{Sp}\left( 4,\mathbb{R} \right)$-Higgs bundles over a compact Riemann surface $X$ of genus $g$ with a divisor $D$ of $s$-many distinct points on $X$, such that $2g-2+s>0$, has at least $(2^s+1)2^{2g+s-1}+2^s(2g-3+s)$ connected components.
\end{thm}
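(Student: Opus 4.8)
The plan is to exploit the correspondence of Section~6 to regard a maximal parabolic $\mathrm{Sp}(4,\mathbb{R})$-Higgs bundle as a Higgs $V$-bundle over the $V$-manifold $M$ with $\mathbb{Z}_2$-isotropy at the $s$ marked points, attach to each such object a tuple of discrete $V$-cohomology invariants, and argue that this tuple is locally constant on $\mathcal{M}_{par}^{max}(\mathrm{Sp}(4,\mathbb{R}))$; the number of distinct realizable tuples is then a lower bound for the number of connected components. First I would invoke Proposition~\ref{504}: in the maximal case $\pardeg(V)=2g-2+s$ and the proof forces $\gamma$ to be an isomorphism. Fixing a square root $L_0$ of $K(D)$ as a line $V$-bundle and setting $W:=V\otimes L_0^{-1}$, the isomorphism $c=\gamma\otimes 1_{L_0^{-1}}$ endows $W$ with a nondegenerate symmetric form, reducing its structure group to $\mathrm{O}(2,\mathbb{C})$. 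Since the underlying bundle has a real structure, $W_{\mathbb{R}}$ is an $\mathrm{O}(2)$-bundle carrying Stiefel--Whitney-type classes $w_1\in H^1_V(M,\mathbb{Z}_2)$ and $w_2\in H^2_V(M,\mathbb{Z}_2)$, which will be the invariants I enumerate.

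Next I would split into the two cases according to whether $w_1$ vanishes. When $w_1\neq 0$, the pair $(w_1,w_2)$ is a genuine invariant, and the rank computations of Section~7.3, namely $\mathrm{rk}\,H^1_V(M,\mathbb{Z}_2)=2g+s-1$ and $\mathrm{rk}\,H^2_V(M,\mathbb{Z}_2)=s$, give $(2^{2g+s-1}-1)\cdot 2^s$ admissible pairs. When $w_1=0$ the structure group reduces to $\mathrm{SO}(2,\mathbb{C})\cong\mathbb{C}^*$, so $W$ splits as $L\oplus L^{\vee}$; analyzing $\phi$ as a nonzero map $L\to L^{\vee}\otimes K(D)^2$ forces $\pardeg(L)\le 2g-2+s$ by polystability. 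Here I would distinguish whether the bound is strict. If $\pardeg(L)<2g-2+s$, the invariant is the classical degree together with the parabolic structure of $L$: for each fixed parabolic structure there are $2g-2+s$ admissible values of $\deg L$, and $2^s$ choices of parabolic structure, giving $2^s(2g-2+s)$ invariants. If $\pardeg(L)=2g-2+s$, then $L^2\cong K(D)^2$, so these components are parameterized by square roots of $K(D)$ and contribute $2^{2g+s-1}$ invariants via Proposition~\ref{708}.

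Finally I would sum the three contributions $(2^{2g+s-1}-1)2^s$, $2^s(2g-2+s)$ and $2^{2g+s-1}$; after simplification this equals $(2^s+1)2^{2g+s-1}+2^s(2g-3+s)$, which is the asserted lower bound.

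The main obstacle I anticipate is not the arithmetic but the justification that these tuples are honestly locally constant on $\mathcal{M}_{par}^{max}(\mathrm{Sp}(4,\mathbb{R}))$, so that distinct tuples force distinct components, and that the three cases are genuinely disjoint with no component counted twice. In the split case one must handle the $L\leftrightarrow L^{\vee}$ symmetry so that unordered pairs are counted once, and one must verify that the Stiefel--Whitney classes descend to well-defined invariants of the parabolic object, independent of the auxiliary choice of $L_0$ beyond the bookkeeping already absorbed into Proposition~\ref{708}. Once the invariants are known to be well-defined and continuous, the lower bound follows immediately from the count above.
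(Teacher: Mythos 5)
Your proposal reproduces the paper's own argument essentially verbatim: the same reduction via $\gamma$ being an isomorphism in the maximal case, the same twist $W=V\otimes L_0^{-1}$ with $\mathrm{O}(2,\mathbb{C})$ structure group and Stiefel--Whitney-type invariants $(w_1,w_2)$ in $V$-cohomology, the same three-case enumeration ($w_1\neq 0$; $w_1=0$ with $\pardeg(L)<2g-2+s$; $w_1=0$ with $L^2\cong K(D)^2$), and the same arithmetic yielding $(2^s+1)2^{2g+s-1}+2^s(2g-3+s)$. The concerns you flag at the end (local constancy of the invariants and disjointness of the cases) are likewise left implicit in the paper's discussion, so your write-up is at the same level of rigor as the original.
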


For  $n\ge 3$, the structure group of the $V$-bundle $W$ above is $\mathrm{O}( n,\mathbb{C})$ and the classification of $\mathrm{O}(n,\mathbb{C})$-bundles does not provide the extra invariant $\mathrm{pardeg}(L)$ in this case. Moreover, for every $n\ge 1$ in general, there are $2^{2g+s-1}$ connected components of the moduli space ${{\mathsf{\mathcal{M}}}_{par}^{max}}\left( \mathrm{Sp}\left( 2n,\mathbb{R} \right) \right)$ parameterized by the square roots of the canonical line bundle $K(D)$ (the parabolic Teichm{\"u}ller components). This provides the following:

\begin{thm}\label{710}
The moduli space ${{\mathsf{\mathcal{M}}}_{par}^{\mathrm{max}}}( \mathrm{Sp}\left( 2,\mathbb{R} \right))$ of maximal polystable parabolic $\mathrm{Sp}\left( 2,\mathbb{R} \right)$-Higgs bundles over a compact Riemann surface $X$ of genus $g$ with a divisor of $s$-many distinct points on $X$, such that $2g-2+s>0$, has at least $2^{2g+s-1}$ connected components and the moduli space  ${{\mathsf{\mathcal{M}}}_{par}^{\mathrm{max}}}\left( \mathrm{Sp}\left( 2n,\mathbb{R} \right) \right)$ for $n \ge 3$ has at least $(2^s+1)2^{2g+s-1}$ connected components.
\end{thm}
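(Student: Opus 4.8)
The plan is to handle the two assertions separately, reusing the orthogonal reduction of Theorem \ref{709} while accounting for the fact that $\mathrm{SO}(n,\mathbb{C})$ no longer splits once $n\ge 3$.

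For $\mathrm{Sp}(2,\mathbb{R})$ the situation is rank one and rigid. Here $V$ is a parabolic line bundle, and by the proof of Proposition \ref{504} maximality forces $\gamma\colon V\to V^{\vee}\otimes K(D)$ to be an isomorphism; thus $V^{2}\cong K(D)$ as parabolic line $V$-bundles, so $V$ is exactly a parabolic square root of $K(D)$. The square-root class is a discrete topological invariant that is constant on connected components, so distinct roots lie in distinct components, and Proposition \ref{708} produces exactly $2^{2g+s-1}$ of them. This yields the asserted lower bound for $\mathcal{M}_{par}^{max}(\mathrm{Sp}(2,\mathbb{R}))$; fixing a root and varying the remaining off-diagonal field sweeps out a connected family, so nothing finer is needed.

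For $n\ge 3$ I would keep the construction $W=V\otimes L_0^{-1}$: the isomorphism $c=\gamma\otimes 1$ makes $W$ a rank-$n$ orthogonal $V$-bundle, whose real form $W_{\mathbb{R}}$ has structure group $\mathrm{O}(n)$ and carries Stiefel--Whitney classes $w_1\in H^1_V(M,\mathbb{Z}_2)$ and $w_2\in H^2_V(M,\mathbb{Z}_2)$. These are locally constant, hence constant on components, so distinct pairs $(w_1,w_2)$ sit in distinct components; by the rank computations of \S 7.3 one has $|H^1_V(M,\mathbb{Z}_2)|=2^{2g+s-1}$ and $|H^2_V(M,\mathbb{Z}_2)|=2^{s}$. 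When $w_1\ne 0$ the $2^{2g+s-1}-1$ nonzero values paired with the $2^{s}$ values of $w_2$ give $2^{s}(2^{2g+s-1}-1)$ components. When $w_1=0$ the structure group reduces to $\mathrm{SO}(n,\mathbb{C})$, and the decisive contrast with Theorem \ref{709} is that for $n\ge 3$ there is no identification $\mathrm{SO}(n,\mathbb{C})\cong\mathbb{C}^{*}$, so $W$ does not split as $L\oplus L^{\vee}$ and the extra invariant $\mathrm{pardeg}(L)$ disappears; only $w_2$ remains, contributing $2^{s}$ components. Adjoining the $2^{2g+s-1}$ parabolic Teichm\"uller components supplied by \S 4 and Proposition \ref{708} and summing, $2^{s}(2^{2g+s-1}-1)+2^{s}+2^{2g+s-1}=(2^{s}+1)2^{2g+s-1}$, which is the claimed bound.

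The main obstacle is turning this tally into a bona fide lower bound, namely realizability and pairwise disjointness of the three families. On the realizability side one must exhibit, for each admissible pair $(w_1,w_2)$, a genuine polystable maximal $\mathrm{Sp}(2n,\mathbb{R})$-Higgs bundle inducing it, i.e.\ equip $W$ with a $c$-symmetric field $\phi\colon W\to W\otimes K(D)^2$ respecting stability. Disjointness is immediate whenever $w_1\ne 0$, so the delicate point is separating the Teichm\"uller locus from the generic oriented stratum: both have $w_1=0$, and because the $\mathrm{pardeg}(L)$ invariant is unavailable for $n\ge 3$ one cannot distinguish them as cheaply as in Theorem \ref{709}. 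I expect the cleanest route is to argue directly that the Teichm\"uller components, whose underlying $W$ inherits the rigid reducible shape dictated by the principal $\mathrm{SL}(2,\mathbb{R})$, constitute full connected components distinct from any component containing a generic stable orthogonal $V$-bundle with the same $(w_1,w_2)$. Since only a lower bound is asserted it suffices to produce these as distinct components; the sharp separation, confirming the count is exact, is the business of the Morse-theoretic companion analysis.
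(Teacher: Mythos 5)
Your proposal follows essentially the same route as the paper: for $\mathrm{Sp}(2,\mathbb{R})$ the count via parabolic square roots of $K(D)$ (Proposition \ref{708}), and for $n\ge 3$ the Stiefel--Whitney classes $(w_1,w_2)$ of the orthogonal $V$-bundle $W_{\mathbb{R}}$ together with the observation that the extra invariant $\mathrm{pardeg}(L)$ is lost since $\mathrm{SO}(n,\mathbb{C})$ no longer splits, plus the $2^{2g+s-1}$ parabolic Teichm\"uller components, summing to $(2^s+1)2^{2g+s-1}$. You are in fact more explicit than the paper about the residual realizability and disjointness issues (the Teichm\"uller components also sit in the $w_1=0$ locus), which the paper leaves implicit and, like you, defers to the Morse-theoretic sequel \cite{KSZ2}.
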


\subsection{Topological Invariants of Parabolic $\mathrm{Sp}(2n,\mathbb{R})$-Higgs Bundles in $\mathcal{M}_{par}^{max,\alpha}(\mathrm{Sp}(2n,\mathbb{R}))$}
In this subsection, we study the topological invariants of parabolic $\mathrm{Sp}(2n,\mathbb{R})$-Higgs bundles in $\mathcal{M}_{par}^{max,\alpha}(\mathrm{Sp}(2n,\mathbb{R}))$, where $\alpha$ is a given parabolic structure. Note that the parabolic structure was not fixed for parabolic Higgs bundles in $\mathcal{M}_{par}^{max}(\mathrm{Sp}(2n,\mathbb{R}))$ earlier. In this subsection, all parabolic $\mathrm{Sp}(2n,\mathbb{R})$-Higgs bundles in $\mathcal{M}_{par}^{max,\alpha}(\mathrm{Sp}(2n,\mathbb{R}))$ are assumed to have the same fixed parabolic structure $\alpha$. More precisely, they have the same filtration over each $x \in D$ with the same weight $\alpha(x)$, for every $x \in D$. The moduli space $\mathcal{M}_{par}^{max,\alpha}(\mathrm{Sp}(2n,\mathbb{R}))$ is a subspace of $\mathcal{M}_{par}^{max}(\mathrm{Sp}(2n,\mathbb{R}))$, studied in \S 7.4.

We shall still deal with the case of parabolic $\mathrm{Sp}(4,\mathbb{R})$-Higgs bundles. With the same notation as we used in \S 7.4, a parabolic $\mathrm{Sp}(4,\mathbb{R})$-Higgs bundle $(E=V \oplus V^{\vee},\Phi =\left( \begin{matrix}
   0 & \beta   \\
   \gamma  & 0  \\
\end{matrix} \right))$ with maximal degree corresponds to a triple $(W,c,\phi)$, where $c$ is an isomorphism and $\phi$ is a parabolic $K(D)^2$-twisted Higgs field. In \S 7.4, we study the topological invariants of $(W,c,\phi)$, which gives the topological invariants of parabolic Higgs bundle $(E,\Phi)$. Now we use the same approach to study the topological invariants of parabolic $\mathrm{Sp}(4,\mathbb{R})$-Higgs bundles with a given parabolic structure $\alpha$.

By considering the real structure of $W=W_{\mathbb{R}}\otimes \mathbb{C}$, the first and second $V$-cohomology $H^1_V(M,\mathbb{Z}_2)$, $H^2_V(M,\mathbb{Z}_2)$ are considered as our topological invariants. Similar to the classical case, there is natural map $\mathrm{Pic}_V(M) \rightarrow H^2_V(M,\mathbb{Z}_2)$. By the calculation in \S 7.3, we know that $\mathrm{rk }H^2_V(M,\mathbb{Z}_2)=s$. Thus the total number of elements in $H^2_V(M,\mathbb{Z}_2)$ is $2^s$. In fact, there is a one-to-one correspondence between $H^2_V(M,\mathbb{Z}_2)$ and the torsion points in $\mathrm{Pic}_V(M)$. Note that, for $M_{V}$, the exact sequence
\begin{align*}
0 \rightarrow \mathbb{Z} \rightarrow O_{M_V} \rightarrow O^*_{M_V} \rightarrow 0,
\end{align*}
provides that
\begin{align*}
H^1(M_V, O_{M_V}) \rightarrow H^1(M_V, O^*_{M_V}) \xrightarrow{\varpi}  H^2_V(M, \mathbb{Z}).
\end{align*}
The first cohomology $H^1(M_V, O^*_{M_V})$ is exactly the $V$-Picard group $\mathrm{Pic}_V(M)$. Therefore, there is a morphism
\begin{align*}
\varpi:\mathrm{Pic}_V(M) \rightarrow  H^2_V(M, \mathbb{Z}).
\end{align*}
By taking the $\mathbb{Z}_2$-coefficient, it is easy to see that the morphism $\varpi$ induces the isomorphism
\begin{align*}
\bigoplus_{i=1}^{s} \mathbb{Z}_2 \cong H^2_V(M, \mathbb{Z}).
\end{align*}
This gives the one-to-one correspondence between $H^2_V(M,\mathbb{Z}_2)$ and the torsion points in $\mathrm{Pic}_V(M)$. In our case, this correspondence is precisely between the second $V$-cohomology $H^2_V(M,\mathbb{Z}_2)$ and parabolic structures of $W$. Thus fixing a parabolic structure $\alpha$ is equivalent to fixing an element in the second $V$-cohomology $H^2_V(M,\mathbb{Z}_2)$.

Now suppose that $w_1$ is trivial. The parabolic bundle $W$ can be written as the sum of parabolic line bundles $L \oplus L^{\vee}$ such that $0 \leq \mathrm{pardeg} (L) \leq 2g-2+s$. As we discussed in {\rm Case (2a)} in \S 7.4, if we fix a parabolic structure $\alpha$ and assume that $\mathrm{pardeg} (L) \neq 2g-2+s$, the parabolic degree is the only topological invariant. In other words, the parabolic structure of $L$ is uniquely determined by that of $W$. We still use the notation $\alpha$ for the parabolic structure of $L$.

In this paragraph, we consider $\alpha$ as the parabolic structure of the line bundle $L$. If $\mathrm{pardeg} (L) =2g-2+s$, the topological invariants are described by the square roots of $K(D)^2$. In other words, $L$ is a square root of $K(D)^2$, however, its parabolic structure is not arbitrary. Only a line bundle $L$ with \emph{even parabolic structure} can be a square root of $K(D)^2$. We will explain this property and define \emph{even} (resp. \emph{odd}) parabolic structure in this paragraph. As we discussed in \S 7.2, there are $2^{2g+s-1}$ many square roots, which correspond to $\mathrm{Hom}(\pi_V^1(M),\mathbb{Z}_2)$. Recall that
\begin{align*}
\pi_V^1(M)=\{a_1,b_1,...,a_g,b_g,\sigma_1,...,\sigma_s \quad | \quad \sigma_1...\sigma_s[a_1,b_1]...[a_g,b_g]=1,\sigma_i^2=1, 1 \leq i \leq s\},
\end{align*}
where the $\sigma_i$ describe the monodromy around the point $x_i$. By the correspondence between line $V$-bundles and parabolic line bundles, the monodromy around $x_i$ corresponds to the weight of the corresponding parabolic line bundle over the point $x_i$. Thus fixing a parabolic structure $\alpha$ is equivalent to fixing the monodromy around $x_i$, $1 \leq i \leq s$. However, not every parabolic structure corresponds to a well-defined element in $\mathrm{Hom}(\pi_V^1(M),\mathbb{Z}_2)$. Indeed, the relation $\sigma_1...\sigma_s[a_1,b_1]...[a_g,b_g]=1$ implies that the number of nontrivial $\sigma_i$ is even. Equivalently, if the cardinality of the set
\begin{align*}
\{x \in D \mathrm{ } | \mathrm{ } \alpha(x)=\frac{1}{2}\}
\end{align*}
is even, then the parabolic structure corresponds to an element in $\mathrm{Hom}(\pi_V^1(M),\mathbb{Z}_2)$, and such a parabolic structure could be a choice for the square root of $K(D)^2$. Thus we say that the parabolic structure $\alpha$ is \emph{even} (resp. \emph{odd}) if the cardinality of the set
\begin{align*}
\{x \in D \mathrm{ } | \mathrm{ } \alpha(x)=\frac{1}{2}\}
\end{align*}
is even (resp. odd).

Here is an easy example. Let $s=1$ and $D=\{x\}$. The parabolic structure $\alpha$ of a parabolic line bundle $L$ is uniquely determined by the weight $\alpha(x)$, which is either $0$ or $\frac{1}{2}$. In this case, the parabolic structure is even if and only if the weight $\alpha(x)=0$, and the parabolic structure is odd if and only if $\alpha(x)=\frac{1}{2}$.

Based on the above discussion, the topological invariants of parabolic $\mathrm{Sp}(4,\mathbb{R})$-Higgs bundles with a fixed parabolic structure $\alpha$ are given as follows.
\begin{enumerate}
\item[(1)] If $w_1 \neq 0$, each element $w_1 \in H_V^1(M,\mathbb{Z}_2)$ is a topological invariant. The number of topological invariants in this case is $2^{2g+s-1}-1$.
\item[(2)] Suppose that $w_1=0$.
    \begin{enumerate}
    \item[a.] If $\mathrm{pardeg}(L) \neq 2g-2+s$, the topological invariants are given by the parabolic degree. The number of topological invariants in this case is $2g-2+s$.
    \item[b.] If $\mathrm{pardeg}(L)=2g-2+s$, the topological invariants are given by the square roots. Note that the parabolic degree of any square root is an integer. In other words, the number of points with nontrivial monodromy is even.
        \begin{enumerate}
        \item[$\mu.$] If the parabolic structure $\alpha$ is even, the number of topological invariants is $2^{2g}$.
        \item[$\nu.$] If the parabolic structure $\alpha$ is odd, the square roots are not well-defined.
        \end{enumerate}
    \end{enumerate}
\end{enumerate}

The discussion above implies the following proposition.

\begin{prop}\label{711}
Let $X$ be a smooth Riemann surface of genus $g$ and let $D$ be a reduced effective divisor of $s$ many points on $X$, such that $2g-2+s>0$. Consider the moduli space $\mathcal{M}_{par}^{max,\alpha}(\mathrm{Sp}(2n,\mathbb{R}))$ of maximal polystable parabolic $\mathrm{Sp}(2n,\mathbb{R})$-Higgs bundles, where $\alpha$ is a given parabolic structure, which is fixed for all Higgs bundles in the moduli space, this means, the parabolic Higgs bundles have the same filtration over each $x \in D$ with the same weight $\alpha(x)$, for every $x \in D$. Then,
\begin{enumerate}
\item[$\mathrm{i}.$] If $\alpha$ is even, the moduli space $\mathcal{M}_{par}^{max,\alpha}(\mathrm{Sp}(4,\mathbb{R}))$ has at least $2^{2g+s-1}+(2g-3+s)+2^{2g}$ connected components.
\item[$\mathrm{ii}$] If $\alpha$ is odd, the moduli space $\mathcal{M}_{par}^{max,\alpha}(\mathrm{Sp}(4,\mathbb{R}))$ has at least $2^{2g+s-1}+(2g-3+s)$ connected components.
\item[$\mathrm{iii}.$] If $\alpha$ is even, the moduli space $\mathcal{M}_{par}^{max,\alpha}(\mathrm{Sp}(2,\mathbb{R}))$ has at least $2^{2g}$ connected components, and the moduli space $\mathcal{M}_{par}^{max,\alpha}(\mathrm{Sp}(2n,\mathbb{R}))$ has at least $2^{2g+s-1}+2^{2g}$ connected components.
\item[$\mathrm{iv}.$] If $\alpha$ is odd, there are no maximal polystable parabolic $\mathrm{Sp}(2,\mathbb{R})$-Higgs bundles with fixed parabolic structure $\alpha$, and the moduli space $\mathcal{M}_{par}^{max,\alpha}(\mathrm{Sp}(2n,\mathbb{R}))$ has at least $2^{2g+s-1}$ many connected components.
\end{enumerate}
\end{prop}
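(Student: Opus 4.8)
The plan is to read off each of the four counts from the topological-invariant analysis carried out just above the statement, specialised to a single fixed parabolic structure $\alpha$ --- equivalently, to a fixed class $w_2\in H^2_V(M,\mathbb{Z}_2)$, via the identification $\bigoplus_{i=1}^s\mathbb{Z}_2\cong H^2_V(M,\mathbb{Z}_2)$ matching $w_2$ with the weight datum of $\alpha$. For $G=\mathrm{Sp}(4,\mathbb{R})$ I would pass to the associated triple $(W,c,\phi)$, with $W_{\mathbb R}$ an $\mathrm{O}(2)$-bundle, and stratify by the first class $w_1\in H^1_V(M,\mathbb{Z}_2)$. The locus $w_1\ne 0$ contributes the $2^{2g+s-1}-1$ nonzero classes; the locus $w_1=0$ splits $W=L\oplus L^\vee$ and is refined by $\mathrm{pardeg}(L)$. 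With $\alpha$ fixed the non-maximal range $0\le\mathrm{pardeg}(L)<2g-2+s$ yields $2g-2+s$ components, since the weight part of the parabolic degree is now pinned down by $\alpha$ and only the integral part $\deg(L)$ varies; the maximal value $\mathrm{pardeg}(L)=2g-2+s$ forces $L^2\cong K(D)^2$ and is counted by parabolic square roots of $K(D)^2$ carrying the structure $\alpha$.

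The crux is this last count, and it is where the even/odd dichotomy enters. A parabolic square root of $K(D)^2$ with prescribed weights corresponds, under $H^1_V(M,\mathbb{Z}_2)\cong\mathrm{Hom}(\pi_V^1(M),\mathbb{Z}_2)$, to a homomorphism sending each loop $\sigma_i$ to the monodromy dictated by $\alpha(x_i)$. Abelianising the presentation
\[\pi_V^1(M)=\langle a_1,b_1,\dots,a_g,b_g,\sigma_1,\dots,\sigma_s\mid \sigma_1\cdots\sigma_s[a_1,b_1]\cdots[a_g,b_g]=1,\ \sigma_i^2=1\rangle\]
over $\mathbb{Z}_2$ kills every commutator, so the surviving relation reads $\sum_i\sigma_i=0$. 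Hence such a homomorphism exists iff the number of indices with $\alpha(x_i)=\tfrac12$ is even, i.e.\ iff $\alpha$ is \emph{even}; and when it does, the images of $a_1,b_1,\dots,a_g,b_g$ are unconstrained, giving exactly $2^{2g}$ choices (a subset of the $2^{2g+s-1}$ square roots of Proposition~\ref{708}). Assembling the three strata yields items (i) and (ii), namely $\big(2^{2g+s-1}-1\big)+(2g-2+s)+2^{2g}$ and $\big(2^{2g+s-1}-1\big)+(2g-2+s)$.

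For the remaining groups I would argue as follows. When $G=\mathrm{Sp}(2,\mathbb{R})$, maximality forces $\gamma$ to be an isomorphism of line bundles, so a maximal Higgs bundle is precisely a parabolic square root of $K(D)$; the identical $\mathbb{Z}_2$-count gives $2^{2g}$ in the even case and no objects in the odd case, which is (iii) and (iv) for $n=1$. For $n\ge 3$ the bundle $W_{\mathbb R}$ has structure group $\mathrm{O}(n)$ with $n\ge 3$, so the refinement by $\mathrm{pardeg}(L)$ is unavailable (there is no splitting $W=L\oplus L^\vee$); the class $w_1\in H^1_V(M,\mathbb{Z}_2)$ alone then distinguishes $2^{2g+s-1}$ components irrespective of the parity of $\alpha$, while the parabolic Teichm\"uller components inside the Hitchin section are indexed by square roots of $K(D)$ with the fixed structure $\alpha$, contributing an extra $2^{2g}$ exactly when $\alpha$ is even. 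Summing gives $2^{2g+s-1}+2^{2g}$ (even) and $2^{2g+s-1}$ (odd).

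The main obstacle I anticipate is bookkeeping rather than any single hard estimate: one must check that the strata cut out by $w_1$, by $\mathrm{pardeg}(L)$, and by the maximal-degree square roots are genuinely disjoint as sets of connected components, so that the counts add rather than overlap --- in particular that the square-root (Teichm\"uller) locus, though it may share the value $w_1=0$ with the generic stratum, is a separate component by virtue of its distinguished Hitchin-section behaviour. Since every stated number is a lower bound, it suffices to exhibit this many provably distinct components, which the invariants $(w_1,w_2)$ together with $\mathrm{pardeg}(L)$ and the square-root parameter are designed to do.
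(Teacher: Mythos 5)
Your proposal is, in substance, the paper's own proof (the discussion in \S 7.5): the same identification of fixing $\alpha$ with fixing a class $w_2\in H^2_V(M,\mathbb{Z}_2)$ via the torsion of $\mathrm{Pic}_V(M)$, the same three strata for $\mathrm{Sp}(4,\mathbb{R})$ (namely $w_1\neq 0$, giving $2^{2g+s-1}-1$; then $w_1=0$ with $\pardeg(L)<2g-2+s$, giving $2g-2+s$; then $w_1=0$ with $L^2\cong K(D)^2$), and the same parity criterion; your mod-$2$ abelianization of $\pi_V^1(M)$ is just a more explicit rendering of the paper's remark that the relation $\sigma_1\cdots\sigma_s[a_1,b_1]\cdots[a_g,b_g]=1$ forces an even number of nontrivial $\sigma_i$. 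Items (i) and (ii) are assembled exactly as in the paper and are correct.

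The caveat concerns items (iii)--(iv), which the paper itself asserts without further argument, and where your phrase ``the identical $\mathbb{Z}_2$-count'' conceals a genuine difference between $K(D)^2$ and $K(D)$. For $K(D)^2$ the reference square root $\pi^*K(D)$ has trivial isotropy, so every square root differs from it by a $2$-torsion line $V$-bundle, and the existence criterion for prescribed weights is indeed that $\#\{i:\alpha(x_i)=\tfrac12\}$ be even. For $K(D)$, by contrast, every square root has isotropy data of total parity congruent to $s$ modulo $2$: one square root is $\pi^*K^{1/2}\otimes L_1\otimes\cdots\otimes L_s$ (a theta-characteristic twisted by the canonical generators with $L_i^2=\mathcal{O}(x_i)$), which is nontrivial at all $s$ points; equivalently, integrality of $\deg V=g-1+\tfrac12\bigl(s-\#\{i:\alpha(x_i)=\tfrac12\}\bigr)$ forces this parity. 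Hence maximal parabolic $\mathrm{Sp}(2,\mathbb{R})$-Higgs bundles with structure $\alpha$ exist if and only if $\#\{i:\alpha(x_i)=\tfrac12\}\equiv s \pmod 2$, which agrees with the stated even/odd dichotomy only when $s$ is even; for odd $s$ the dichotomy in (iii)--(iv) (and likewise the Teichm\"{u}ller contribution of $2^{2g}$ for $n\geq 3$) is reversed. You have faithfully reproduced the paper's reasoning here, but if you rerun your own abelianization argument on $K(D)$ rather than $K(D)^2$ you will see that this step --- in your write-up and in the Proposition as stated --- requires either the parity-of-$s$ correction or the standing assumption that $s$ is even.
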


\section{Other Lie groups}

The topological invariants and component count method developed for the case $G=\mathrm{Sp}(2n,\mathbb{R})$ in the previous section hint towards counting the minimum number of maximal components of moduli of polystable parabolic $G$-Higgs bundles also in other cases in which ${G}/{H}\;$ is a Hermitian symmetric space. We directly adapt the treatment followed by the authors in \cite{BrGPGHermitian} in the non-parabolic case. We will restrict to the cases when the bounded symmetric domain corresponding to the Hermitian symmetric space ${G}/{H}\;$ is of tube type. For the classical semisimple Lie groups this means we will be interested in the groups $\mathrm{SU}(n,n)$, $\mathrm{S}{{\mathrm{O}}^{*}}(2n)$ for even integer $n$, $\mathrm{S}{{\mathrm{O}}_{0}}(2,n)$ and $E_{7}^{-25}$ (cf. \cite{BrGPGHermitian} for a more detailed description).

In the sequel, $\left( X,D \right)$ will always denote a compact Riemann surface $X$ of genus $g$ together with a divisor $D:=\left\{ {{x}_{1}},\ldots ,{{x}_{s}} \right\}$ of $s$-many distinct points on $X$, assuming that $2g-2+2s>0$.

\subsection{$G=\mathrm{SU}(n,n)$}
\begin{defn}\label{801}
A \emph{parabolic $\mathrm{SU}(n,n)$-Higgs bundle} over $\left( X,D \right)$ is a parabolic Higgs bundle $\left( E,\Phi  \right)$, such that
\begin{enumerate}
  \item $E=V\oplus W$, where $V$ and $W$ are parabolic vector bundles of rank $n$ with $\pardeg V=-\pardeg W$.
  \item $\Phi =\left( \begin{matrix}
   0 & \beta   \\
   \gamma  & 0  \\
\end{matrix} \right):E\to E\otimes K\left( D \right)$, where $\beta :W\to V\otimes K\left( D \right)$ and $\gamma :V\to W\otimes K\left( D \right)$ are parabolic morphisms.
\end{enumerate}
\end{defn}

A parabolic Toledo invariant for a parabolic $\mathrm{SU}(n,n)$-Higgs bundle is defined by $\tau =\pardeg V=-\pardeg W$ and similarly to the proof of Proposition 5.4 one can establish the Milnor-Wood bound
\[\left| \tau  \right|\le n\left( g-1+\frac{s}{2} \right).\]
Maximality for the Toledo invariant provides that $\gamma :V\to W\otimes K\left( D \right)$ is a parabolic isomorphism. This, together with the condition $\det W={{\left( \det V \right)}^{-1}}$ for the corresponding $V$-bundles $V,W$ imply that
	\[{{\left( \det W \right)}^{2}}\simeq {{\left( {{\left( K\left( D \right) \right)}^{\vee }} \right)}^{n}}.\]
Choosing a square root
${{L}_{0}}$ of $K\left( D \right)$ and defining $\tilde{W}=W\otimes {{L}_{0}}$, we have ${{\left( \det \tilde{W} \right)}^{2}}\simeq \mathsf{\mathcal{O}}$. Therefore, the topological invariant for $\det \tilde{W}$ is defined by the choices of a square root of the trivial line $V$-bundle, which can take ${{2}^{2g+s-1}}$ different values. We deduce the following:

\begin{thm}\label{802}
The minimal number of connected components of the moduli spaces of parabolic Higgs bundles $\mathsf{\mathcal{M}}_{par}^{max}\left( \mathrm{SU}(n,n)\right)$, $\mathsf{\mathcal{M}}_{par}^{max,\alpha}\left( \mathrm{SU}(n,n)\right)$ is given as follows

\begin{center}
\begin{tabular}{|c|c|}
\hline
Moduli Space $\mathcal{M}$ & $\#{{\pi }_{0}}\left( \mathsf{\mathcal{M}}\right)$ \\
    \hline\hline
    $\mathsf{\mathcal{M}}_{par}^{max}\left( \mathrm{SU}(n,n)\right)$   &  \small{${{2}^{2g+s-1}}$}  \\
    \hline
    $\mathsf{\mathcal{M}}_{par}^{max,\alpha}\left( \mathrm{SU}(n,n)\right)$, $\alpha$ is even   & \small{$2^{2g}$}  \\
    \hline
    $\mathsf{\mathcal{M}}_{par}^{max,\alpha}\left( \mathrm{SU}(n,n)\right)$, $\alpha$ is odd   & \small{$-$} \\
    \hline
\end{tabular}
\end{center}
\end{thm}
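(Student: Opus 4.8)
The plan is to transcribe the component count of \S 7 for $\mathrm{Sp}(2n,\mathbb{R})$ into the $\mathrm{SU}(n,n)$ setting, where the role of the orthogonal Stiefel--Whitney data is now played by a single determinant line $V$-bundle recording the $\mathrm{SU}(n,n)$-constraint. First I would start from a maximal polystable parabolic $\mathrm{SU}(n,n)$-Higgs bundle $\left(E=V\oplus W,\Phi\right)$ and run the maximality analysis: exactly as in the proof of Proposition \ref{504}, equality in the Milnor--Wood bound $|\tau|\le n\left(g-1+\tfrac{s}{2}\right)$ forces the parabolic morphism $\gamma:V\to W\otimes K(D)$ to be a parabolic isomorphism. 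Taking parabolic determinants of $V\simeq W\otimes K(D)$ and combining with the $\mathrm{SU}(n,n)$-condition $\det W=\left(\det V\right)^{-1}$ then yields $\left(\det W\right)^{2}\simeq\left(\left(K(D)\right)^{\vee}\right)^{n}$, as recorded in the excerpt.

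Next I would pass to the $V$-bundle picture via the correspondence of Proposition \ref{610}, fix a square root $L_0$ of $K(D)$ as a line $V$-bundle using Proposition \ref{708}, and set $\tilde{W}=W\otimes L_0$, so that $\left(\det\tilde{W}\right)^{2}\simeq\mathsf{\mathcal{O}}$. The discrete topological invariant attached to the Higgs bundle is then the class of the square root $\det\tilde{W}$ of the trivial line $V$-bundle; being locally constant in holomorphic families, distinct values of $\det\tilde{W}$ lie in distinct connected components, so it suffices to count them to obtain the asserted lower bounds. By Proposition \ref{708} applied to the trivial bundle, the trivial line $V$-bundle over $M$ admits exactly $2^{2g+s-1}$ square roots, which establishes the entry $2^{2g+s-1}$ for $\mathsf{\mathcal{M}}_{par}^{max}\left(\mathrm{SU}(n,n)\right)$ in the first row of the table.

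For the rows with a fixed parabolic structure $\alpha$, I would use the identification from \S 7.5 between $H^2_V(M,\mathbb{Z}_2)$ and the $2$-torsion of the isotropy (Seifert) part of $\mathrm{Pic}_V(M)$, together with the splitting of Theorem \ref{706}: fixing $\alpha$ amounts to fixing the monodromy classes $\sigma_i^{\beta_i}$ of $\det\tilde{W}$ around the punctures, leaving free only the $2$-torsion of $\mathrm{Pic}^0(X)\cong\mathrm{Jac}(X)$, namely $H^1(X,\mathbb{Z}_2)\cong(\mathbb{Z}_2)^{2g}$, which has $2^{2g}$ elements. This gives $2^{2g}$ components whenever a square root with the prescribed isotropy exists, and the existence is governed by the parity criterion of \S 7.5: the relation $\sigma_1\cdots\sigma_s[a_1,b_1]\cdots[a_g,b_g]=1$ in $\pi_V^1(M)$ forces the number of nontrivial $\sigma_i$, equivalently the cardinality of $\{x\in D\mid\alpha(x)=\tfrac{1}{2}\}$, to be even. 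Hence for $\alpha$ even a compatible square root of $\mathsf{\mathcal{O}}$ exists and one obtains $2^{2g}$ components, whereas for $\alpha$ odd no square root carries the prescribed isotropy, so $\mathsf{\mathcal{M}}_{par}^{max,\alpha}\left(\mathrm{SU}(n,n)\right)$ is empty, explaining the entry ``$-$''.

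The step I expect to be the main obstacle is the last one: verifying cleanly that fixing $\alpha$ pins down precisely the isotropy part of $\det\tilde{W}$ and nothing in the Jacobian direction, and that the evenness obstruction to the existence of a compatible square root is genuinely an equivalence rather than a one-sided implication. Both points hinge on careful bookkeeping relating the parabolic weights of $W$, the induced weights of $\det\tilde{W}$, and the $V$-fundamental group relation; once this is settled, the remainder is a direct transcription of the $\mathrm{Sp}$-case argument, so I would state it concisely by reference to \S 7 rather than repeating the cohomology computations.
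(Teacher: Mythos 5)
Your proposal is correct and follows essentially the same route as the paper: maximality forces $\gamma$ to be a parabolic isomorphism, which combined with $\det W=(\det V)^{-1}$ gives $(\det\tilde{W})^{2}\simeq\mathcal{O}$ for $\tilde{W}=W\otimes L_{0}$, and the component count is the count of square roots of the trivial line $V$-bundle ($2^{2g+s-1}$ by Proposition \ref{708}), with the fixed-$\alpha$ rows handled exactly as in \S 7.5 via the parity criterion on the isotropy data (cf. Remark \ref{803} and Proposition \ref{711}, items iii and iv, for $\mathrm{SU}(1,1)\simeq\mathrm{Sp}(2,\mathbb{R})$). The bookkeeping you flag as the main obstacle is precisely the content the paper leaves implicit, and your resolution of it matches the paper's intended argument.
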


\begin{rem}\label{803}
The preceding analysis coincides with the analysis for $\mathrm{Sp}(2,\mathbb{R})\simeq \mathrm{SU}(1,1)$. Note, however, that for $n\ne 1$ there are no Teichm{\"u}ller components, since $\mathrm{SU}(n,n)$ is not a split real form.
\end{rem}

\subsection{$\mathrm{S}{{\mathrm{O}}^{*}}(2n)$, for $n$ even}

\begin{defn}\label{804}
A \emph{parabolic $\mathrm{S}{{\mathrm{O}}^{*}}(2n)$-Higgs bundle} over $\left( X,D \right)$ for $n=2m$ is a parabolic Higgs bundle $\left( E,\Phi  \right)$, such that
\begin{enumerate}
  \item $E=V\oplus {{V}^{\vee }}$, where $V$ is a parabolic vector bundle of rank $n$, and
  \item $\Phi =\left( \begin{matrix}
   0 & \beta   \\
   \gamma  & 0  \\
\end{matrix} \right):E\to E\otimes K\left( D \right)$, where $\beta :{{V}^{\vee }}\to V\otimes K\left( D \right)$ and $\gamma :V\to {{V}^{\vee }}\otimes K\left( D \right)$ are skew-symmetric parabolic morphisms.
\end{enumerate}
\end{defn}
A parabolic Toledo invariant for a parabolic $\mathrm{S}{{\mathrm{O}}^{*}}(2n)$-Higgs bundle is defined by $\tau =par\deg V$, for which:
\[\left| \tau  \right|\le n\left( g-1+\frac{s}{2} \right).\]
Again the maximal case for $\tau$ will imply that $\gamma$ is an isomorphism and for a fixed square root ${{L}_{0}}$  of $K\left( D \right)$, and $\tilde{W}={{V}^{\vee }}\otimes {{L}_{0}}$, the homomorphism
\[\omega :=\gamma \otimes {{I}_{L_{0}^{\vee }}}:{{\tilde{W}}^{\vee }}\to \tilde{W}\]
is a skew-symmetric isomorphism defining a symplectic structure on the $V$-bundle $\tilde{W}$, in other words, $\left( \tilde{W},\omega  \right)$ is an $\mathrm{Sp}(2n,\mathbb{C})$-holomorphic $V$-bundle. Thus, the moduli space of maximal parabolic $\mathrm{S}{{\mathrm{O}}^{*}}(2n)$-Higgs bundles is homeomorphic to the moduli space of principal ${{H}^{\mathbb{C}}}$-bundles for $H\simeq \mathrm{Sp}(n)$, and $\mathrm{Sp}(n)$ is simply connected. Note that the moduli space of symplectic vector bundles is connected \cite{Rama}. Thus if we fix a parabolic structure $\alpha$ of $E$, i.e. a parabolic strucutre of $\tilde{W}$, the moduli space of pairs $\left( \tilde{W},\omega  \right)$, where $\tilde{W}$ is of parabolic structure $\alpha$, is connected. The above discussion provides the following theorem.

\begin{thm}\label{805}
The moduli space $\mathsf{\mathcal{M}}_{par}^{max}\left( \mathrm{S}{{\mathrm{O}}^{*}}(2n)\right)$ of maximal polystable parabolic $\mathrm{S}{{\mathrm{O}}^{*}}(2n)$-Higgs bundles has at least $2^s$ many connected components. The moduli space $\mathsf{\mathcal{M}}_{par}^{max,\alpha}\left( \mathrm{S}{{\mathrm{O}}^{*}}(2n)\right)$ has at least one connected component.
\end{thm}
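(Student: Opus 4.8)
The plan is to build directly on the reduction carried out in the paragraph preceding the statement. Maximality of the Toledo invariant $\tau=\pardeg V$ forces, by the same argument as in the proof of Proposition \ref{504}, the skew-symmetric morphism $\gamma$ to be an isomorphism; fixing a square root $L_0$ of $K(D)$ and setting $\tilde W=V^\vee\otimes L_0$, $\omega=\gamma\otimes I_{L_0^\vee}$ then presents the data as a holomorphic symplectic $V$-bundle $(\tilde W,\omega)$ (here it is essential that $n=2m$ is even, so that the rank-$n$ bundle $\tilde W$ can carry a symplectic form, as in Definition \ref{804}). First I would make precise that the count of connected components reduces to that of the moduli of such symplectic $V$-bundles: the residual datum $\beta$ varies in a vector space (exactly as the induced twisted field $\phi$ does in the $\mathrm{Sp}(4,\mathbb{R})$ discussion of \S7.4, and as in the non-parabolic treatment \cite{BrGPGHermitian}), so it contributes nothing to $\pi_0$, and $\pi_0$ is governed entirely by the underlying symplectic $V$-bundle.

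Next I would fix a parabolic structure $\alpha$ and prove connectedness of the corresponding moduli space, which is the second assertion. Using Theorem \ref{702}, a symplectic $V$-bundle on $M$ of fixed isotropy type corresponds to a $\Gamma$-equivariant principal symplectic bundle on the covering $Y$ of a fixed equivariant topological type. I would then invoke the connectedness of moduli of semistable principal bundles for a connected structure group \cite{Rama}, together with the fact that the relevant compact form $H\simeq \mathrm{Sp}(n)$ is simply connected, to conclude that for fixed $\alpha$ the moduli space of pairs $(\tilde W,\omega)$ is connected. This yields the bound $\ge 1$ for $\mathsf{\mathcal{M}}_{par}^{max,\alpha}(\mathrm{SO}^*(2n))$.

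For the first assertion I would then enumerate the admissible parabolic structures. With the trivial flag and a single weight in $\{0,\tfrac12\}$ at each $x_i\in D$, the isotropy of $\tilde W$ at $x_i$ is the central involution $\pm\mathrm{Id}$, giving two choices per point and hence $2^s$ parabolic structures; equivalently these are the $2^s$ elements of $H^2_V(M,\mathbb{Z}_2)$ identified in \S7.3 and \S7.5 with the torsion of $\mathrm{Pic}_V(M)$. Crucially, and in contrast with the $\mathrm{SU}(n,n)$ case of Theorem \ref{802}, there is no parity obstruction: the symplectic structure forces $\det\tilde W\cong\mathsf{\mathcal{O}}$ and the central isotropy $-\mathrm{Id}$ on the rank-$n$ (with $n$ even) bundle contributes $(-1)^n=1$ to the determinant, so each of the $2^s$ structures is realised by a non-empty moduli space. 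Since the parabolic structure is a discrete, locally constant invariant, distinct structures lie in distinct components, and combining this with the connectedness above gives at least $2^s$ components.

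The main obstacle I anticipate is the connectedness step: transporting Ramanathan's non-parabolic result \cite{Rama} to the parabolic/orbifold setting with prescribed isotropy. One must verify that passing to $\Gamma$-equivariant bundles on $Y$ with fixed equivariant topological type does not split the moduli into further pieces, and here the simple-connectedness of $\mathrm{Sp}(n)$ is precisely what rules out the extra $\mathbb{Z}_2^{2g}$-worth of components that arise, via the determinant line $V$-bundle, in the $\mathrm{SU}(n,n)$ analysis. I would also need to check that the reduction of the first paragraph is continuous in families, so that the homeomorphism between $\mathsf{\mathcal{M}}_{par}^{max}(\mathrm{SO}^*(2n))$ and the symplectic $V$-bundle moduli respects connected components.
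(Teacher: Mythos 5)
Your proposal is correct and follows essentially the same route as the paper's own proof: maximality forces $\gamma$ to be an isomorphism, twisting by a square root $L_0$ of $K(D)$ yields a symplectic $V$-bundle $\left( \tilde{W},\omega \right)$, connectedness for fixed parabolic structure comes from Ramanathan's theorem \cite{Rama} together with the simple connectedness of $\mathrm{Sp}(n)$, and the $2^s$ admissible parabolic structures give the component count. The extra details you supply (that $\beta$ contributes nothing to $\pi_0$, the equivariant transfer via Theorem \ref{702}, and the absence of a parity obstruction in contrast with the $\mathrm{SU}(n,n)$ case) are elaborations of steps the paper leaves implicit rather than a different argument.
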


\subsection{$\mathrm{S}{{\mathrm{O}}_{0}}(2,n)$}

\begin{defn}\label{806}
A \emph{parabolic $\mathrm{S}{{\mathrm{O}}_{0}}(2,n)$-Higgs bundle} over $\left( X,D \right)$ is a parabolic Higgs bundle $\left( E,\Phi  \right)$, such that
\begin{enumerate}
  \item $E=V\oplus W$, where $V=L\oplus {{L}^{\vee }}$ for a parabolic line bundle $L$ and $W$ corresponds to a rank $n$ orthogonal $V$-bundle.
  \item $\Phi =\left( \begin{matrix}
   0 & 0 & \beta   \\
   0 & 0 & \gamma   \\
   -{{\gamma }^{t}} & -{{\beta }^{t}} & 0  \\
\end{matrix} \right):E\to E\otimes K\left( D \right)$, where $\beta :W\to L\otimes K\left( D \right)$ and $\gamma :W\to {{L}^{\vee }}\otimes K\left( D \right)$ are parabolic morphisms.
\end{enumerate}
\end{defn}
A parabolic Toledo invariant for a parabolic $\mathrm{S}{{\mathrm{O}}_{0}}(2,n)$-Higgs bundle is defined by $\tau =par\deg L$ and a Milnor-Wood bound is described by
\[\left| \tau  \right|\le 2g-2+s.\]
Maximality for the Toledo invariant provides that $\gamma :V\to {{L}^{\vee }}\otimes K\left( D \right)$ has maximal rank one at all points and hence is surjective. Define $F=\ker \gamma $ and consider the short exact sequence
	\[0\to F\to W\to {{L}^{\vee }}\otimes K\left( D \right)\to 0.\]
Then the sequence splits and $F$ inherits an
$\mathrm{O}\left( n-1,\mathbb{C} \right)$-structure. Consider the line bundle ${{L}_{0}}:={{L}^{\vee }}\otimes K\left( D \right)$. From the exact sequence we deduce that ${{L}_{0}}\otimes \det F\simeq \mathsf{\mathcal{O}}$, hence $L_{0}^{2}\simeq \mathsf{\mathcal{O}}$. This, in turn, implies that ${{L}^{2}}\simeq {{\left( K\left( D \right) \right)}^{2}}$. From this point on, we further distinguish two cases:

\paragraph{\textit{Case 1: $n\ge 4$.}} In this case, the only topological invariants we obtain are the Stiefel-Whitney classes for the $\mathrm{O}\left( n-1,\mathbb{C} \right)$-bundle. This provides a minimum of ${{2}^{s}}\cdot {{2}^{2g+s-1}}$ connected components for $\mathsf{\mathcal{M}}_{par}^{max }\left( \mathrm{S}{{\mathrm{O}}_{0}}(2,n) \right)$.

\paragraph{\textit{Case 2: $n=3$.}} In this case, $F$ is an $\mathrm{O}\left( 2,\mathbb{C} \right)$-bundle and the treatment is similar to the $\mathrm{Sp}(4,\mathbb{R})$-case. There is a distinguished component for every value of $\left( w_1,w_2 \right)$, for $w_1 \ne 0$, where $w_1 \in H^1_V(M,\mathbb{Z}_2)$ and $w_2 \in H^2_V(M,\mathbb{Z}_2)$; this provides at least ${{2}^{s}}\left( {{2}^{2g+s-1}}-1 \right)$ connected components.

For $w_1=0$, there is a decomposition $F=M\oplus {{M}^{-1}}$ for a line $V$-bundle $M$. As in the case of $\mathrm{Sp}(4,\mathbb{R})$, one can show that there is a non-trivial holomorphic map $M\to {{\left( K\left( D \right) \right)}^{2}}$, which provides that $0\le \mathrm{pardeg} (M)\le 4g-4+2s$. For each value of the degree $\mathrm{pardeg} (M)<4g-4+2s$, there is a distinguished connected component for each fixed parabolic structure $\alpha$. Note here that in contrast to the $\mathrm{Sp}(4,\mathbb{R})$-case, when $\mathrm{pardeg} (M)=4g-4+2s$, there is an isomorphism $M\simeq {{\left( K\left( D \right) \right)}^{2}}$. Thus, there are no further invariants coming from this case. We conclude to the following:

\begin{thm}\label{807}
The minimal number of connected components of the moduli spaces of maximal polystable parabolic Higgs bundles $\mathsf{\mathcal{M}}_{par}^{max}\left( \mathrm{S}{{\mathrm{O}}_{0}}(2,n)\right)$, $\mathsf{\mathcal{M}}_{par}^{max,\alpha}\left( \mathrm{S}{{\mathrm{O}}_{0}}(2,n)\right)$ is given as follows
\begin{table}[htb]
\begin{tabular}{|c|c|}
\hline
Moduli Space $\mathcal{M}$ & $\#{{\pi }_{0}}\left( \mathsf{\mathcal{M}}\right)$ \\
    \hline\hline
    $\mathsf{\mathcal{M}}_{par}^{max}\left( \mathrm{S}{{\mathrm{O}}_{0}}(2,3)\right)$   &  \small{${{2}^{s}}\left( {{2}^{2g+s-1}}-1 \right)+2^s(4g-3+2s)$}  \\
    \hline
    $\mathsf{\mathcal{M}}_{par}^{max,\alpha}\left( \mathrm{S}{{\mathrm{O}}_{0}}(2,3)\right)$ & \small{$2^{2g+s-1}+(4g-3+2s)$}  \\
    \hline
    $\mathsf{\mathcal{M}}_{par}^{max}\left( \mathrm{S}{{\mathrm{O}}_{0}}(2,n)\right)$, $n \geq 4$   &  \small{${{2}^{2g+2s-1}}$}  \\
    \hline
    $\mathsf{\mathcal{M}}_{par}^{max,\alpha}\left( \mathrm{S}{{\mathrm{O}}_{0}}(2,n)\right)$, $n \geq 4$ & \small{$2^{2g+s-1}$}  \\
    \hline
  \end{tabular}
\end{table}
\end{thm}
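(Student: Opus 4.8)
The plan is to organise the structural reduction carried out in the paragraphs above into a systematic extraction of discrete topological invariants, and then to argue that each such invariant is constant on connected components, so that the number of values it realises is a lower bound for $\#\pi_0(\mathcal{M})$. The starting point is maximality: as shown above, $\gamma$ has maximal rank one everywhere and is therefore surjective, the sequence $0\to F\to W\to L^\vee\otimes K(D)\to 0$ splits, the kernel $F$ acquires an $\mathrm{O}(n-1,\mathbb{C})$-structure as a $V$-bundle, and $L^2\simeq (K(D))^2$. Thus a maximal bundle is rigidly organised around the orthogonal $V$-bundle $F$, and the invariants I would read off are the Stiefel--Whitney classes of $F$ together with, in the low-rank case, a parabolic degree. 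Throughout I use that $H^1_V(M,\mathbb{Z}_2)$ has $2^{2g+s-1}$ elements and $H^2_V(M,\mathbb{Z}_2)$ has $2^s$ elements by the Mayer--Vietoris computation of \S 7.3, and that fixing the parabolic structure $\alpha$ amounts, through the correspondence $H^2_V(M,\mathbb{Z}_2)\cong\bigoplus_{i=1}^s\mathbb{Z}_2$ of \S 7.5, to fixing the value of $w_2$.

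For $n\ge 4$ the rank $n-1\ge 3$ orthogonal $V$-bundle $F$ is classified up to the relevant topological data by its first two Stiefel--Whitney classes $w_1\in H^1_V(M,\mathbb{Z}_2)$ and $w_2\in H^2_V(M,\mathbb{Z}_2)$, and no further continuous modulus survives, since the rank-two reduction $F=M\oplus M^{-1}$ exploited below is unavailable once $n-1\ge 3$. I would therefore count all pairs $(w_1,w_2)$, obtaining $2^{2g+s-1}\cdot 2^s=2^{2g+2s-1}$ components for $\mathcal{M}_{par}^{max}(\mathrm{SO}_0(2,n))$; fixing $\alpha$ freezes $w_2$ and leaves the $2^{2g+s-1}$ values of $w_1$, giving the corresponding two rows of the table. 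The only points to verify here are that the two classes are realised independently and are deformation invariant, which follows exactly as in the $n\ge 3$ symplectic case of Theorem \ref{710}.

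The substantive case is $n=3$, where $F$ is an $\mathrm{O}(2,\mathbb{C})$-bundle and the argument runs parallel to the $\mathrm{Sp}(4,\mathbb{R})$ analysis of \S 7.4--7.5, with the single twisting line bundle replaced by $(K(D))^2$, so that the induced bound becomes $0\le\mathrm{pardeg}(M)\le 4g-4+2s$. I would split into $w_1\ne 0$ and $w_1=0$. When $w_1\ne 0$ the pair $(w_1,w_2)$ is again a complete invariant, contributing $2^s(2^{2g+s-1}-1)$ components, respectively $2^{2g+s-1}-1$ after fixing $\alpha$. When $w_1=0$ the structure group reduces to $\mathrm{SO}(2,\mathbb{C})\cong\mathbb{C}^*$, giving $F=M\oplus M^{-1}$, and the nontrivial component $M\to(K(D))^2$ of the residual Higgs field forces the stated degree range, with $\mathrm{pardeg}(M)$ the distinguishing invariant. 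The crucial difference from the symplectic situation is the boundary value $\mathrm{pardeg}(M)=4g-4+2s$: here $M\simeq (K(D))^2$ rigidly, so—unlike the $\mathrm{Sp}(4,\mathbb{R})$ case, where $L^2\simeq K(D)^2$ produced the $2^{2g+s-1}$ square roots of Proposition \ref{708}—no exponential family of extra components appears at the top degree. Assembling the $w_1\ne0$ contribution, the interior degree strata, and the single boundary stratum, and reinstating the $2^s$ choices of $w_2$ in the unrestricted moduli, leads to the tabulated counts $2^s(2^{2g+s-1}-1)+2^s(4g-3+2s)$ and, after fixing $\alpha$, $2^{2g+s-1}+(4g-3+2s)$.

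The main obstacle I expect lies in the bookkeeping for the $n=3$, $w_1=0$ stratum: establishing the sharp range $0\le\mathrm{pardeg}(M)\le 4g-4+2s$ from stability of the induced map into $(K(D))^2$, verifying that each admissible value of $\mathrm{pardeg}(M)$ is actually realised by a polystable bundle (non-emptiness), and handling the boundary stratum so that it contributes a single component rather than a square-root ambiguity. A secondary issue, common to every case, is to confirm that the Stiefel--Whitney classes and the parabolic degree are genuinely locally constant on the moduli space, so that distinct values force distinct components; I would justify this exactly as in the non-parabolic treatment of \cite{BrGPGHermitian}, transported through the $V$-bundle correspondence of \S 6.
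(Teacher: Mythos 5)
Your proposal is correct and follows essentially the same route as the paper's own proof: maximality forces $\gamma$ to be surjective, the split exact sequence produces the rank-$(n-1)$ orthogonal $V$-bundle $F$ whose Stiefel--Whitney classes $(w_1,w_2)\in H^1_V(M,\mathbb{Z}_2)\times H^2_V(M,\mathbb{Z}_2)$ give the counts for $n\ge 4$, and for $n=3$ the extra $w_1=0$ stratum is handled via $F=M\oplus M^{-1}$ with $0\le\pardeg(M)\le 4g-4+2s$ and the rigid boundary identification $M\simeq (K(D))^2$ (no square-root family, in contrast to $\mathrm{Sp}(4,\mathbb{R})$), exactly as in \S 8.3. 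The one blemish --- the off-by-one tension between your fixed-$\alpha$ assembly $(2^{2g+s-1}-1)+(4g-3+2s)$ and the tabulated $2^{2g+s-1}+(4g-3+2s)$ --- is inherited from the paper itself (compare Table 1 with Table 2) and does not reflect a gap in your argument.
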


\subsection{$\bf{E}_{7}^{-25}$}
The general Milnor-Wood type inequality established in \cite{BiGaRu} provides a description of maximal $G$-Higgs bundles also in the exceptional tube case when $G=E_{7}^{-25}$. In this case, a maximal compact subgroup is $H=E_{6}^{-78}{{\times }_{{{\mathbb{Z}}_{3}}}}\mathrm{U}\left( 1 \right)$ and $\mathrm{rk}\left( {G}/{H}\; \right)=3$. For the Toledo invariant $\tau =\tau \left( E \right)$ as defined in \cite{BiGaRi} in this full generality in the parabolic case, the Milnor-Wood inequality is given by
	\[\left| \tau  \right|\le 3\left( g-1+\frac{s}{2} \right).\]
In the maximal case for $\tau $, following the non-parabolic treatment of \cite{BiGaRu}, we get that a maximal parabolic $E_{7}^{-25}$-Higgs bundle corresponds to an ${H}'={{F}_{4}}\times {{\mathbb{Z}}_{2}}$-holomorphic $V$-bundle $\tilde{W}$. The group ${{{H}'}^{\mathbb{C}}}$ is not connected and the short exact sequence
	\[1\to {{{H}'}_{0}}^{\mathbb{C}}\to {{{H}'}^{\mathbb{C}}}\to {{\pi }_{0}}\left( {{{{H}'}}^{\mathbb{C}}} \right)\cong {{\mathbb{Z}}_{2}}\to 1\]
provides the homomorphism in the induced long exact sequence in $V$-cohomology
	\[H_{V}^{1}\left( M,{{{{H}'}}^{\mathbb{C}}} \right)\to H_{V}^{1}\left( M,{{\mathbb{Z}}_{2}} \right)\cong {{\left( {{\mathbb{Z}}_{2}} \right)}^{2g+s-1}}.\]
The associated invariants in $H_{V}^{1}\left( M,{{\mathbb{Z}}_{2}} \right)$ provide the following:

\begin{thm}\label{808}
The minimal number of connected components of the moduli space $\mathsf{\mathcal{M}}_{par}^{max }\left( E_{7}^{-25}\right)$ is $2^{2g+s-1}$.
\end{thm}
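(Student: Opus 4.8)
The plan is to follow the same template as the $\mathrm{S}{{\mathrm{O}}^{*}}(2n)$ and $\mathrm{SU}(n,n)$ cases, exploiting the reduction already recorded above: a maximal parabolic $E_{7}^{-25}$-Higgs bundle corresponds to an $H'={{F}_{4}}\times {{\mathbb{Z}}_{2}}$-holomorphic $V$-bundle $\tilde{W}$, so that counting maximal components amounts to enumerating the topological types of ${{H'}^{\mathbb{C}}}$-$V$-bundles over $M$ with fixed parabolic structure. First I would record the key structural input. Because the sequence preceding the statement is that of a \emph{direct} product, an ${{H'}^{\mathbb{C}}}$-$V$-bundle is equivalent to a pair consisting of a principal $F_{4}^{\mathbb{C}}$-$V$-bundle and a principal $\mathbb{Z}_{2}$-$V$-bundle, i.e. a class in $H_{V}^{1}(M,\mathbb{Z}_{2})$. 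The identity component $F_{4}^{\mathbb{C}}$ is connected, semisimple and simply connected (the compact form $F_{4}$ has trivial fundamental group and trivial centre, so its complexification is simply connected), whence principal $F_{4}^{\mathbb{C}}$-$V$-bundles carry no topological invariants and, with fixed isotropy data, their moduli space is connected by a Ramanathan-type argument exactly as invoked for $\mathrm{S}{{\mathrm{O}}^{*}}(2n)$ via \cite{Rama}. Thus the entire topological type is captured by the $\pi_{0}\big({{H'}^{\mathbb{C}}}\big)\cong\mathbb{Z}_{2}$ factor.

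Next I would extract this invariant as the image of the characteristic-class map $w\colon H_{V}^{1}(M,{{H'}^{\mathbb{C}}})\to H_{V}^{1}(M,\mathbb{Z}_{2})$ coming from the long exact sequence in (nonabelian) $V$-cohomology associated to the short exact sequence above; since that sequence splits, $w$ is simply projection onto the second factor and is therefore surjective with connected fibres (the fibres being copies of the connected moduli of $F_{4}^{\mathbb{C}}$-$V$-bundles). By the Mayer--Vietoris computation in \S7.3 together with Theorem \ref{704} one has $H_{V}^{1}(M,\mathbb{Z}_{2})\cong(\mathbb{Z}_{2})^{2g+s-1}$, so the target has exactly $2^{2g+s-1}$ elements. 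It remains to note that $w$ is locally constant on $\mathsf{\mathcal{M}}_{par}^{max}(E_{7}^{-25})$, so it separates connected components, and that every class is realized (immediate from the split surjectivity). Combining local constancy, surjectivity, and connectedness of the fibres gives a bijection between the set of connected components and $H_{V}^{1}(M,\mathbb{Z}_{2})$, yielding the count $2^{2g+s-1}$.

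The hard part will be justifying the connectedness of the fibres of $w$ in the $V$-bundle setting, i.e. upgrading the classical Ramanathan connectedness theorem for moduli of semistable principal $G$-bundles with $G$ connected and simply connected to principal $F_{4}^{\mathbb{C}}$-bundles over the $V$-surface $M=[Y/\Gamma]$ with prescribed isotropy representations at the marked points; concretely this means passing to $\Gamma$-equivariant $F_{4}^{\mathbb{C}}$-bundles on $Y$ via Theorem \ref{702} and checking that the equivariant constraint does not break connectedness. Granting this, the remaining steps are formal, and the cohomology count is already in place from \S7.3. Should the full connectedness of the fibres prove delicate, the weaker conclusion still suffices for the stated bound: distinct values of $w$ give distinct components, and each value is nonempty, which already forces at least $2^{2g+s-1}$ connected components.
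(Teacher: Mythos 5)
Your proposal follows essentially the same route as the paper: the Cayley-type reduction of maximal parabolic $E_{7}^{-25}$-Higgs bundles to $H'=F_{4}\times\mathbb{Z}_{2}$-holomorphic $V$-bundles, the induced invariant map to $H_{V}^{1}(M,\mathbb{Z}_{2})\cong(\mathbb{Z}_{2})^{2g+s-1}$, and the count of realized classes. Since the theorem (as the paper's subsequent remark makes explicit) only asserts a lower bound on the number of components, your final fallback paragraph — distinct values of $w$ lie in distinct components and every value is realized — is precisely the paper's argument, so the stronger fibre-connectedness claim via a Ramanathan-type theorem is not needed (and would anyway have to account for the Higgs-field moduli, not just the underlying $F_{4}^{\mathbb{C}}$-$V$-bundle moduli, to upgrade the bound to an exact count).
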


\begin{rem}
Theorem \ref{808} gives the least number of components for the corresponding moduli space for the group $E_{7}^{-25}$ . We leave open the possibility of having more topological invariants coming from the second cohomology group $H^2_V(M,\mathbb{Z}_2)$.
\end{rem}

Summarizing the results of the theorems in the previous sections on the minimum number of connected components of the moduli spaces of polystable maximal parabolic $G$-Higgs bundles for the classical Hermitian symmetric Lie groups $G$, one has Tables 1, 2 and 3 included at the end of the main body of this article. 

\section{Two Special Cases}
In this section, we discuss how one can obtain the classical component counts in \cite{Stru}, \cite{BrGPGHermitian},\cite{GaGMsymplectic}, \cite{Gothen} and \cite{Hit92} as special cases of the Theorems in \S 7.4 and \S 8.

\subsection{Punctured Riemann Surface}
In \cite{Stru}, T. Strubel defined Fenchel-Nielsen coordinates on the moduli space of maximal representations of the fundamental group of a topological surface ${{\Sigma }_{g,m}}$ of genus $g$ and $m\ge 1$ boundary components into $\mathrm{Sp}\left( 2n,\mathbb{R} \right)$. Using these coordinates and counting parameters for gluing pairs of pants to obtain a surface with $m$-boundary components, he showed that the moduli space ${{\mathsf{\mathcal{R}}}^{\max }}\left( {{\Sigma }_{g,m}},\mathrm{Sp}\left( 2n,\mathbb{R} \right) \right)$ has exactly ${{2}^{2g+m-1}}$ connected components for every $n\ge 1$. Note that for such representations there is no assumption on the monodromy around the boundary components.

From our point of view, let $X$ be a compact Riemann surface of genus $g$ and $\left\{ {{p}_{1}},\ldots ,{{p}_{s}} \right\}$ a collection of $s$-many distinct points on $X$. We may use the method from \S 7 to compute the number of topological invariants, however, in this case, we do not have to construct the $V$-manifold:

Let $M=U_1=X \backslash \{p_1,...,p_s\}$ be a punctured Riemann surface without any action on the $\Gamma$-equivariant bundle, in other words, without a construction of a $V$-bundle. The calculations from \S 7.3 now adapt to give the following
\begin{align*}
\mathrm{rk}(H^0(M))=1,\\
\mathrm{rk}(H^1(M))=2g+s-1,\\
\mathrm{rk}(H^2(M))=0.\\
\end{align*}
Moreover, since $H^2(M)$ is trivial, the number of topological invariants of maximal parabolic $\mathrm{Sp}(4,\mathbb{R})$-Higgs bundles over the punctured Riemann surface $M$ is only determined by the first cohomology group $H^1(M)$, thus is exactly $2^{2g+s-1}$. This is Case (1) as we discussed in \S 7.4. We further notice that when the first cohomology $u \in H^1(M)$ is trivial, one can decompose $W=L \bigoplus L^{\vee}$ as the direct sum of line bundles and this line bundle $L$ is over a punctured Riemann surface, which is an affine space, and there is only one line bundle over an affine space, the trivial one. As a result, there are no extra topological invariants coming from Cases 2(a) and 2(b), therefore the minimum number of connected components over $M$ is $2^{2g+s-1}$.

\begin{rem}\label{901}
The same argument provides the number $2^{2g+s-1}$ also in the cases for $\mathrm{Sp}(2,\mathbb{R})$ and $\mathrm{Sp}(2n,\mathbb{R})$ for $n \ge 3$ from Theorem \ref{710}; this gives an alternative explanation to T. Strubel's main result from \cite{Stru}.
\end{rem}

\subsection{The case when s=1}

The number of connected components of moduli of maximal $G$-Higgs bundles (non-parabolic) for the classical Hermitian symmetric spaces ${G}/{H}\;$ has been determined in \cite{BrGPGHermitian} and the references therein. For the reader's convenience the basic results from that article are included in Table 9.2.1. 

\begin{table}[htb]
\caption*{Table 9.2.1. Number of connected components of the non-parabolic moduli space $\mathsf{\mathcal{M}}^{max }\left( G \right)$.}
\begin{tabularx}{\textwidth}{XXX}
Lie group $G$ & $\#{{\pi }_{0}}\left( \mathsf{\mathcal{M}}^{max }\left( G \right) \right)$ & Teichm{\"u}ller components \\
    \hline\hline
    $\mathrm{Sp}(2,\mathbb{R})=\mathrm{SL}(2,\mathbb{R})$   &  \small{${{2}^{2g}}$}  &  \small{${{2}^{2g}}$}  \\
    $\mathrm{Sp}(4,\mathbb{R})$   & \small{$3\cdot {{2}^{2g}}+4g-4$}  & \small{${{2}^{2g}}$}  \\
    $\mathrm{Sp}(2n,\mathbb{R})$, for $n\ge 3$   & \small{$3\cdot{{2}^{2g}}$}  & \small{${{2}^{2g}}$}  \\
    $\mathrm{SU}(n,n)$   & \small{${{2}^{2g}}$}  & -  (\small{${{2}^{2g}}$ if $n=1$}) \\
    $\mathrm{S}{{\mathrm{O}}^{*}}(2n)$, for $n$: even   & \small{1} & -  \\
    $\mathrm{S}{{\mathrm{O}}_{0}}(2,3)$   & \small{$ {{2}^{2g+1}}+8g-4$}  & \small{1}  \\
    $\mathrm{S}{{\mathrm{O}}_{0}}(2,n)$, for $n\ge 4$   & \small{${{2}^{2g+1}}$}  & -  \\
    $E_{7}^{-25}$   & \small{${{2}^{2g}}$}  & -  \\
    \hline\hline
  \end{tabularx}
\end{table}

For a line $V$-bundle $\tilde{L}$, consider a local chart ${U}/{{{\mathbb{Z}}_{2}}}\;$ around a single point $p\in D$. The cohomology group for the $V$-manifold $M$ is described by
\begin{align*}
H^1_V(M, \mathbb{Z}_2) = \mathrm{Hom}(\pi_V^1(M),\mathbb{Z}_2),
\end{align*}
where
\begin{align*}
\pi_V^1(M)=\{a_1,b_1,...,a_g,b_g,\sigma \quad | \quad \sigma[a_1,b_1]..[a_g,b_g]=1,\sigma^2=1\}.
\end{align*}

For a well-defined morphism $\rho \in \mathrm{Hom}\left( \pi _{V}^{1}\left( M \right),{{\mathbb{Z}}_{2}} \right)$, the first relation for the fundamental group provides that $\det \left( \rho \left( \sigma \left[ {{a}_{1}},{{b}_{1}} \right]\ldots \left[ {{a}_{g}},{{b}_{g}} \right] \right) \right)=1$. But
$\det \left( \rho (\left[ {{a}_{i}},{{b}_{i}} \right]) \right)=1$ for $1\le i\le g$, since $\operatorname{Im}\rho $ lies in $\mathbb{Z}_2$. Thus  $\det \left( \rho \left( \sigma  \right) \right)=1$, that is, in terms of the construction included in \S 7.2 the $\mathbb{Z}_2$-isotropy around the point $p$ is \emph{trivial}. Hence, $\tilde{L}\to M$ is a holomorphic line bundle over the Riemann surface (non-parabolic); denote the latter by $L\to X$, and there are $2^{2g}$ many non-isomorphic square roots of the canonical line bundle, when $s=1$. This implies the non-parabolic component count for $G=\mathrm{Sp}(2n,\mathbb{R})$ when $n\ne 2$, $\mathrm{SU}(n,n)$, $E_{7}^{-25}$, $\mathrm{S}{{\mathrm{O}}^{*}}(2n)$ when $n$ is even, and $\mathrm{S}{{\mathrm{O}}_{0}}(2,n)$ when $n\ge 4$.

\textit{The component count for $\mathrm{Sp}(4,\mathbb{R})$.} The topological invariants that distinguish the connected components of $\mathsf{\mathcal{M}}_{par}^{\max }\left( \mathrm{Sp}(4,\mathbb{R}) \right)$ are $w_1 \in H^1_V(M,\mathbb{Z}_2)$, $w_2 \in H^2(M,\mathbb{Z}_2)$, the parabolic structure of a parabolic line bundle $L$ (equivalently a line bundle over $M$) and its parabolic degree $\mathrm{pardeg}(L)$ such that
	\[0 \le \mathrm{pardeg}(L) \le 2g-2+s\]
as we discussed in \S 7.4. Their values distinguish connected components as follows:
	\[\underbrace{{{2}^{s}}\left( {{2}^{2g+s-1}}-1 \right)}_{{w_1}\ne 0,{w_2}}+ \underbrace{2^s(2g-2+s)}_{\mathrm{pardeg}(L)=0,1,\ldots ,2g-3+s}+\underbrace{{{2}^{2g+s-1}}}_{\mathrm{pardeg}(L)=2g-2+s}\]
Now, as we have already checked, when $s=1$, $H_{V}^{1}\left( M,{{\mathbb{Z}}_{2}} \right)\simeq \mathbb{Z}_{2}^{2g}$ and this space is parameterizing non-isomorphic holomorphic line bundles $L\to M$, where $M$ is a $V$-manifold as considered in \S 7. Thus, we get accordingly the invariants:
\[\underbrace{2\left( {{2}^{2g}}-1 \right)}_{{w_1}\ne 0,{w_2}}+\underbrace{2(2g-2+1)}_{\mathrm{pardeg}(L)}+\underbrace{ {{2}^{2g}} }_{\mathrm{pardeg}(L)=2g-1}.\]

Now we consider the non-parabolic case of $K(D)$-twisted $\mathrm{Sp}(4,\mathbb{R})$-Higgs bundles, where $D$ includes a single point. The component count is given as follows \cite{BarSch2}
\begin{align*}
2(2^{2g}-1)+(2g-2+1)+2^{2g}.
\end{align*}
Compared to the parabolic case, the difference comes from the middle part $(2g-2+1)$. The reason is that we have to consider the parabolic structure of the line bundle $L$ as we discussed in \S 7.4. If we forget the parabolic structure (or consider the parabolic structure with weight $0$), we revoke the classical $K(D)$-twisted case: $3\cdot 2^{2g}+2g-3$.

\textit{The component count for $\mathrm{S}{{\mathrm{O}}_{0}}(2,3)$.} Quite similarly, the topological invariants that distinguish the connected components of $\mathsf{\mathcal{M}}_{par}^{max }\left( \mathrm{S}{{\mathrm{O}}_{0}}(2,3) \right)$ are $u,v$, the parabolic structure of a line bundle $M$ and its parabolic degree $\mathrm{pardeg}(M)$. As we discussed in \S 8.3, if $w_1=0$, there is a decomposition of the rank $3$ bundle $W=M \oplus \mathcal{O} \oplus M^{-1}$, where $M$ is a parabolic line bundle with $0 \le\mathrm{pardeg}(M) \le 4g-4+2s$. Connected components are distinguished as follows:
	\[\underbrace{{{2}^{s}}\left( {{2}^{2g+s-1}}-1 \right)}_{{w_1}\ne 0,{w_2}}+\underbrace{2^s(4g-3+2s)}_{\mathrm{pardeg}(M)}\]
When $s=1$, for the parabolic line bundle $M$, we have
\begin{align*}
\underbrace{2 \left( {{2}^{2g}}-1 \right)}_{{w_1}\ne 0,{w_2}}+\underbrace{2(4g-3+2)}_{\mathrm{pardeg}(M)},
\end{align*}
where $\mathrm{pardeg}(M)$ has $4g-3+2$ many choices and $2$ is the number of choices of the parabolic structures.

Now we consider the non-parabolic case of $K(D)$-twisted $\mathrm{S}{{\mathrm{O}}_{0}}(2,3)$-Higgs bundles, where $D$ includes a single point. The component count is similar to the $\mathrm{S}{{\mathrm{O}}_{0}}(2,3)$-case, and we have
\begin{align*}
2(2^{2g}-1)+(4g-3+2).
\end{align*}
Compared to the parabolic case under the assumption $s=1$, the difference comes from the part $(4g-3+2)$. The reason is the same as we discussed for the case of $\mathrm{Sp}(4,\mathbb{R})$. More precisely, we have to consider the parabolic structure of the line bundle $M$. If we forget the parabolic structure, we will go back to the non-parabolic $K(D)$-twisted case: $3\cdot 2^{2g}+4g-3$.

\begin{rem}\label{902}
The description of how the component count specializes to the non-parabolic case when $s=1$ for $G=\mathrm{Sp}(4,\mathbb{R})$ and $G=\mathrm{S}{{\mathrm{O}}_{0}}(2,3)$, points out an important difference between parabolic and non-parabolic bundles. As we have seen already, all degree zero line bundles on an orbifold surface can be naturally lifted to a compact Riemann surface. The extra $2^s$-many choices of the invariants of the line $V$-bundle $L$ for $G=\mathrm{Sp}(4,\mathbb{R})$ are coming from tensoring with the square roots of $\mathsf{\mathcal{O}}\left( {{p}_{i}} \right)$, where $p_{i}$ are the points in the divisor.
\end{rem}

\begin{rem}\label{903}
When $s=1$, we note that there is an element $\sigma \in \pi_V^1(M)$. By the discussion above, it seems that the calculation of connected components does not depend on the monodromy action, which means that the connected components should be the same for all $p \geq 2$ such that $\sigma^p=1$. We want to remind the reader that $p$ also corresponds to the denominator of the weight in the parabolic structure.

If we change the monodromy action with $\sigma^p=1$, we use the following local charts $U_2 = \prod_{i=1}^s D / \mathbb{Z}_p$ and $V_2= \prod_{i=1}^s D \times_{\mathbb{Z}_p}E \mathbb{Z}_p$ to construct $M_V$. Here we follow the notation from \S 7.3. We use the Leray spectral sequence to calculate the cohomology of $V_2$,
\begin{align*}
H^{*}(B \mathbb{Z}_p, H^{*}(D)) \Rightarrow H^{*}(V_2),
\end{align*}
where $B \mathbb{Z}_p$ is the classifying space of $\mathbb{Z}_{p}$. The $\mathbb{Z}$-coefficient cohomology $H^i(B\mathbb{Z}_p,\mathbb{Z})$ is well-known (see for instance \cite{Hat}):
\begin{equation*}
H^i(B\mathbb{Z}_p,\mathbb{Z})=
\begin{cases}
\mathbb{Z},\, \mathrm{ for }\, i=0,\\
\mathbb{Z} / p\mathbb{Z}, \,\mathrm{ for } \, 2 | i,\\
0,\, \mathrm{otherwise}.
\end{cases}
\end{equation*}
Since $H^{*}(V_2)$ is $\mathbb{Z}_2$-cohomology, we consider the following two cases:
\begin{enumerate}
\item[(1)] When $p$ is odd, $H^1 (V_2)= H^2(V_2)=0$. In this case, $\mathrm{rk}(H^1(M_V))=2g+s-1$ and $\mathrm{rk}(H^2(M_V))=s$, which is the same as what we calculated in \S 7.
\item[(2)] When $p$ is even, $H^1 (V_2)=0$ while $H^2(V_2) \neq 0$. Note that in this case, $\mathrm{rk}(H^1(M_V))$ and $\mathrm{rk}(H^2(M_V))$ do not coincide with our calculation in \S 7.
\end{enumerate}
In conclusion, when the monodromy group is the cyclic group $\mathbb{Z}_p$ with $p$ an odd integer, then the number of connected components coincides with the $\mathbb{Z}_2$ case. If $p$ is an even number, it does not.
\end{rem}

\newpage

\begin{table}[htb]
\caption{Minimum number of connected components of $\mathsf{\mathcal{M}}_{par}^{max }\left( G \right)$.}
\begin{tabularx}{\textwidth}{XXX}
Lie group $G$ & $\#{{\pi }_{0}}\left( \mathsf{\mathcal{M}}_{par}^{max }\left( G \right) \right)$ & Teichm{\"u}ller components \\
    \hline\hline
    $\mathrm{Sp}(2,\mathbb{R})=\mathrm{SL}(2,\mathbb{R})$   &  \small{${{2}^{2g+s-1}}$}  &  \small{${{2}^{2g+s-1}}$}  \\
    $\mathrm{Sp}(4,\mathbb{R})$   & \small{$\left( {{2}^{s}}+1 \right){{2}^{2g+s-1}}+2^s(2g-3+s)$}  & \small{${{2}^{2g+s-1}}$}  \\
    $\mathrm{Sp}(2n,\mathbb{R})$, for $n\ge 3$   & \small{$\left( {{2}^{s}}+1 \right){{2}^{2g+s-1}}$}  & \small{${{2}^{2g+s-1}}$}  \\
    $\mathrm{SU}(n,n)$   & \small{${{2}^{2g+s-1}}$}  & -  (\small{${{2}^{2g+s-1}}$ if $n=1$}) \\
    $\mathrm{S}{{\mathrm{O}}^{*}}(2n)$, for $n$: even   & \small{$2^s$} & -  \\
    $\mathrm{S}{{\mathrm{O}}_{0}}(2,3)$   & \small{${{2}^{s}}\left( {{2}^{2g+s-1}}-1 \right)+2^s(4g-3+2s)$}  & \small{1}  \\
    $\mathrm{S}{{\mathrm{O}}_{0}}(2,n)$, for $n\ge 4$   & \small{${{2}^{2g+2s-1}}$}  & -  \\
    $E_{7}^{-25}$   & \small{${{2}^{2g+s-1}}$}  & -  \\
    \hline\hline
  \end{tabularx}
\end{table}

\vspace{10mm}

\begin{table}[htb]
\caption{Minimum number of connected components of $\mathsf{\mathcal{M}}_{par}^{max,\alpha }\left( G \right)$ with even $\alpha$.}
\begin{tabularx}{\textwidth}{XXX}
Lie group $G$ & $\#{{\pi }_{0}}\left( \mathsf{\mathcal{M}}_{par}^{max,\alpha }\left( G \right) \right)$ & Teichm{\"u}ller components \\
    \hline\hline
    $\mathrm{Sp}(2,\mathbb{R})=\mathrm{SL}(2,\mathbb{R})$   &  \small{${{2}^{2g}}$}  &  \small{${{2}^{2g}}$}  \\
    $\mathrm{Sp}(4,\mathbb{R})$   & \small{${2}^{2g+s-1}+(2g-3+s)+2^{2g}$}  & \small{${{2}^{2g}}$}  \\
    $\mathrm{Sp}(2n,\mathbb{R})$, for $n\ge 3$   & \small{${2}^{2g+s-1}+2^{2g}$}  & \small{${{2}^{2g}}$}  \\
    $\mathrm{SU}(n,n)$   & \small{${{2}^{2g}}$}  & -  (\small{${{2}^{2g}}$ if $n=1$}) \\
    $\mathrm{S}{{\mathrm{O}}^{*}}(2n)$, for $n$: even   & \small{1} & -  \\
    $\mathrm{S}{{\mathrm{O}}_{0}}(2,3)$   & \small{${{2}^{2g+s-1}} +(4g-3+2s)$}  & \small{1}  \\
    $\mathrm{S}{{\mathrm{O}}_{0}}(2,n)$, for $n\ge 4$   & \small{${{2}^{2g+s-1}}$}  & -  \\
    \hline\hline
  \end{tabularx}
\end{table}

\vspace{10mm}

\begin{table}[htb]
\caption{Minimum number of connected components of $\mathsf{\mathcal{M}}_{par}^{max,\alpha }\left( G \right)$ with odd $\alpha$.}
\begin{tabularx}{\textwidth}{XXX}
Lie group $G$ & $\#{{\pi }_{0}}\left( \mathsf{\mathcal{M}}_{par}^{max,\alpha }\left( G \right) \right)$ & Teichm{\"u}ller components \\
    \hline\hline
    $\mathrm{Sp}(2,\mathbb{R})=\mathrm{SL}(2,\mathbb{R})$   &  \small{-}  &  -  \\
    $\mathrm{Sp}(4,\mathbb{R})$   & \small{${2}^{2g+s-1}+(2g-3+s)$}  & -  \\
    $\mathrm{Sp}(2n,\mathbb{R})$, for $n\ge 3$   & \small{${2}^{2g+s-1}$}  & -  \\
    $\mathrm{SU}(n,n)$   & -  & -  \\
    $\mathrm{S}{{\mathrm{O}}^{*}}(2n)$, for $n$: even   & \small{1} & -  \\
    $\mathrm{S}{{\mathrm{O}}_{0}}(2,3)$   & \small{${{2}^{2g+s-1}} +(4g-3+2s)$}  & \small{1}  \\
    $\mathrm{S}{{\mathrm{O}}_{0}}(2,n)$, for $n\ge 4$   & \small{${{2}^{2g+s-1}}$}  & -  \\
    \hline\hline
  \end{tabularx}
\end{table}

\newpage

\appendix

\section{Stability Condition for Parabolic $G$-Higgs Bundles}

In this appendix, we review the definition by O. Biquard, O. Garc\'{i}a-Prada and I. Mundet i Riera \cite{BiGaRi} for a general parabolic $G$-Higgs bundle and the definition of parabolic degree for parabolic principal bundles. We start from the basic Lie algebra background and claim that for $G=\mathrm{U}(n)$ and $G^{\mathbb{C}}=\mathrm{GL}(n,\mathbb{C})$, the definition of parabolic degree coincides with the one defined earlier in \S 2.1. Moreover, for $G=\mathrm{Sp}(2n,\mathbb{R})$, the general definitions also reduce to the ones considered in \S 5.

Let $G$ be a real reductive group, thus one has a Cartan decomposition of the Lie algebra $\mathfrak{g}=\mathfrak{h}\oplus\mathfrak{m}$, where $\mathfrak{h}$ is the Lie algebra of the maximal compact subgroup $H$ of $G$. This decomposition has the property that $[\mathfrak{h},\mathfrak{m}]\subset \mathfrak{m}$ and $[\mathfrak{m},\mathfrak{m}]\subset \mathfrak{h}$.  Now, the right action of $H$ defines the symmetric space $H\backslash G$. The stabilizer to $[1]\in H\backslash G$ is $H$, so we can identify $T_{[1]}H\backslash G$ with $\mathfrak{h}\backslash \mathfrak{g}\cong \mathfrak{m}$ and is stabilized by the adjoint action of $H$. Thus any metric on $\mathfrak{m}$ defines an $H$-invariant Riemannian metric on the symmetric space $H\backslash G$. This $H\backslash G$ is a symmetric of negative curvature, whose boundary could be defined by the geodesic rays denoted by $\partial_\infty(H\backslash G)$. This can be described in terms of a parabolic group as follows:

\begin{defn}\label{parabolicdefn}
A subgroup $P$ of $G$ is called \textit{parabolic}, if there exists $s\in\mathfrak{m}$ such that
\[P= P_{s}:=\{g\in G|d([e^{ts}ge^{-ts}],[1])\mbox{ is bounded when }t\to \infty\}.\]
and the parabolic subalgebra of $\mathfrak{g}$ is then defined as
\[\mathfrak{p}_s:=\{x\in \mathfrak{g}| Ad(e^{ts})x\mbox{ is bounded when }t\to \infty\}.\]
We call $s\in \mathfrak{m}$ to be the \textit{antidominant element} for $P$, if $P=P_s$.
\end{defn}

For a choice of $s\in\mathfrak{m}$ and $g\in G$, we may write $g=ph$ for some $p\in P_s$ and $h\in H$. We then set
\[s\cdot g:=Ad(h^{-1})(s).\]
Note that the element coming from this action still stays in $\mathfrak{m}$.

We define the \textit{geodesic ray} in $H\backslash G$ to be a morphism of the form $\gamma:[0,\infty)\to H\backslash G$ by $\gamma(t)= [ e^{t s}\cdot g]$ for some $s\in \mathfrak{h}$ and $g\in G$. Here $e^{ts}\cdot g$ is the ordinary product in $G$ and $[g]$ means the representative in $H\backslash G$.

Now we define the equivalence of two geodesics as follows:
\begin{defn}
Let $d$ be the distance function between points in $H\backslash G$. We say two geodesic rays $\gamma_1,\gamma_2$ are \textit{equivalent} if
$d(\gamma_1(t),\gamma_2(t))$ is bounded and independent of $t$.
\end{defn}
Now the \textit{boundary at infinity} of the symmetric space is defined by
\[\partial_\infty (H\backslash G)=\{\mbox{geodesic rays}\}/\sim.\]

\begin{rem}
The parabolic group $P_s$ is actually the stabilizer of the element $\gamma(t)=[e^{ts}\cdot g]$.
\end{rem}

For any $s\in \mathfrak{m}\backslash\{0\}$, the ordinary geodesic $\eta_s(t):t\mapsto [ e^{ts}\cdot 1]$ provides an element in $\partial_\infty(H\backslash G)$. The claim is that for all possible $s$, then $\eta_s$ enumerates all elements in $\partial_\infty(H\backslash G)$. Indeed, one has the following lemma:
\begin{lem}
There is an equivalence between $[ e^{ts}\cdot g]$ and $[e^{tu}]$, where $u$ is determined as follows: If $g$ has the decomposition $g=ph$ with $p\in P_s$, $h\in H$, then $u=s\cdot g:=\mathrm{Ad}_{h^{-1}}(s)$.
\end{lem}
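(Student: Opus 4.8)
The plan is to prove equivalence of the two rays directly from the definition, by showing that the distance $d(\gamma_1(t),\gamma_2(t))$ between $\gamma_1(t)=[e^{ts}g]$ and $\gamma_2(t)=[e^{tu}]$ stays bounded as $t\to\infty$. The engine of the argument is that the right $G$-action on $H\backslash G$ acts by isometries for the natural (right $G$-invariant) metric induced by the $\mathrm{Ad}(H)$-invariant inner product on $\mathfrak{m}$; this lets me translate the entire configuration back to a neighborhood of the base point $[1]$, where the defining boundedness property of $P_s$ in Definition \ref{parabolicdefn} applies. Note first that $u=\mathrm{Ad}(h^{-1})(s)\in\mathfrak{m}$, since $\mathrm{Ad}(H)$ preserves the Cartan decomposition, so $\gamma_2=\eta_u$ is genuinely a geodesic ray of the stated form.

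The computation itself rests on a single Lie-theoretic identity: since $u=\mathrm{Ad}(h^{-1})(s)$ and $e^{\mathrm{Ad}(h^{-1})(X)}=h^{-1}e^{X}h$ for $X\in\mathfrak{g}$, one has $e^{tu}=h^{-1}e^{ts}h$ for all $t$, hence $e^{-tu}=h^{-1}e^{-ts}h$. Translating $\gamma_2(t)$ to the base point by the isometry given by right multiplication with $e^{-tu}$ yields
\[
d(\gamma_1(t),\gamma_2(t))=d\bigl([e^{ts}g\,e^{-tu}],[1]\bigr).
\]
Substituting $g=ph$ together with the identity for $e^{-tu}$, the middle factors collapse:
\[
e^{ts}g\,e^{-tu}=e^{ts}(ph)(h^{-1}e^{-ts}h)=\bigl(e^{ts}pe^{-ts}\bigr)h.
\]
A second right translation, now by $h^{-1}\in H$ (again an isometry), clears the trailing $h$; since $[h^{-1}]=Hh^{-1}=H=[1]$, this leaves
\[
d(\gamma_1(t),\gamma_2(t))=d\bigl([e^{ts}pe^{-ts}],[1]\bigr).
\]
By the very definition of $P_s$, the right-hand side is bounded as $t\to\infty$ precisely because $p\in P_s$, which is exactly the assertion that $\gamma_1\sim\gamma_2$.

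The one point demanding care—and the main potential obstacle—is reconciling the coset conventions with the correct invariance of the metric. One must verify that the comparisons are carried out in the genuinely right $G$-invariant symmetric-space metric, not merely under the isotropy $H$, because the first translation is by the non-compact element $e^{-tu}$; with that in hand the isotropy translation by $h^{-1}$ is legitimate and, crucially, returns the base point of $\gamma_2$ to $[1]=H$ since $h^{-1}\in H$ is absorbed by the left quotient. Once these conventions are pinned down, the proof is the short computation above, and no estimate beyond the defining boundedness of $P_s$ is needed.
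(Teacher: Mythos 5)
Your proof is correct and follows essentially the same route as the paper's: both use right $G$-invariance of the metric to reduce to $d\bigl([e^{ts}g\,e^{-tu}],[1]\bigr)$, the decomposition $g=ph$, the conjugation identity $e^{t\,\mathrm{Ad}_{h^{-1}}(s)}=h^{-1}e^{ts}h$, and the defining boundedness property of $P_s$ applied to $e^{ts}pe^{-ts}$. Your version merely reorganizes the same algebra (collapsing $e^{ts}g\,e^{-tu}$ to $(e^{ts}pe^{-ts})h$ directly rather than inserting $e^{-ts}e^{ts}$ as the paper does), and the preliminary check that $u\in\mathfrak{m}$ is a worthwhile touch the paper leaves implicit.
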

\begin{proof}
One has $d([e^{ts}\cdot g],[e^{tu}])=d([e^{ts}ge^{-tu}],[1])=d([e^{ts}p e^{-ts} e^{ts}he^{-tu}],[1])$, thus the distance is bounded if and only if $e^{ts}he^{-tu}$ is bounded. On the other hand, we know that $e^{ts}h=he^{t \mathrm{Ad}_{h^{-1}}(s)}$ from the Baker-Hausdorff formula. It follows that in order to have boundedness, we may just let $u=\mathrm{Ad}_{h^{-1}}(s)$. This calculation also shows that the following limit exists:
\[\lim_{t\to\infty}\frac{1}{t}\log(e^{ts}\cdot g)=\mathrm{Ad}_{h^{-1}}(s).\]
Here the $\log$ is defined to be $\log(g)=v$ if $g$ has the Cartan decomposition $g= ke^{v}$ for $k\in H$ and $v\in\mathfrak{m}$.
\end{proof}

We deduce that for every element $\gamma$ in $\partial_\infty(H\backslash G)$ and for every element $x\in H\backslash G$, we can find an element $s$ in $\mathfrak{m}$ such that $\gamma(t)=[x\cdot e^{ts}]$. We call $v(x,\gamma):=s\in \mathfrak{m}\cong T_x(H\backslash G)$.

The Tits distance between $\gamma,\gamma'\in \partial_\infty(H\backslash G)$ is defined by
\[d_{Tits}(\gamma,\gamma')=\sup_{x\in H\backslash G} \textrm{Angle}(v(x,\gamma),v(x,\gamma')),\]
where the angle is in $[0,\pi]$; it measures the maximal possible angle from the same initial point to the required boundary $\gamma$ and $\gamma'$.

Let $P$ be the parabolic group $P_s$ associated to $s\in\mathfrak{m}$ and $Q=Q_\sigma$ the parabolic for $\sigma\in \mathfrak{m}$. Then we define the \emph{relative degree} as
\[\deg((P,s),(Q,\sigma)):=|s|\cdot|\sigma|\cdot\cos d_{Tits}(\eta(s),\eta(\sigma)).\]
The definition can be seen from the topology of $\partial_\infty(H\backslash G)$. We are measuring the inner product of the tangent vector associated to $\eta(s):=\gamma_s(t)=[e^{ts}]$ and $\eta_\sigma:=\gamma_\sigma(t)=[e^{t\sigma}]$ and are trying to determine the maximum possible degree. An alternative characterization is given by the function 

\[\mu_s:\mathfrak{m}\to\mathbb{R},\quad \mu_s(\sigma)=\lim_{t\to+\infty}\langle s\cdot e^{-t\sigma},\sigma\rangle.\]

\begin{lem}
For $|s|=|\sigma|=1$, we can identify $s$ and $\sigma$ with the corresponding element in $\partial_\infty(H\backslash G)$
\[\mu_s(\sigma)=\cos d_{Tits}(s,\sigma).\]
\end{lem}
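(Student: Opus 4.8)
The plan is to reinterpret $\mu_s(\sigma)$ as a limit of Alexandrov angles measured along the geodesic ray pointing towards $\eta_\sigma$, and then to identify that limit with the Tits angle using the standard theory of the boundary at infinity of a Hadamard manifold. Throughout I identify a unit vector $w\in\mathfrak{m}$ with the boundary point $\eta_w\in\partial_\infty(H\backslash G)$ carried by the ray $[e^{rw}]$. Since $H\backslash G$ is a nonpositively curved symmetric space, it is a complete $\mathrm{CAT}(0)$ (Hadamard) manifold, so for any basepoint $x$ the Alexandrov angle $\angle_x(\xi,\xi')$ between two boundary points equals the Riemannian angle between the initial velocities $v(x,\xi),v(x,\xi')$ of the two rays from $x$; in particular $d_{Tits}(\xi,\xi')=\sup_x\mathrm{Angle}(v(x,\xi),v(x,\xi'))=\sup_x\angle_x(\xi,\xi')$, matching the definition given above.

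First I would record the basepoint identity. At $x=[1]$ one has $T_{[1]}(H\backslash G)\cong\mathfrak{m}$, so $v([1],\eta_u)=u$ for every unit $u\in\mathfrak{m}$; taking $u=s\cdot e^{-t\sigma}$, which is a unit vector because $\mathrm{Ad}(h^{-1})$ is an isometry of $\mathfrak{m}$, gives $\langle s\cdot e^{-t\sigma},\sigma\rangle=\cos\angle_{[1]}(\eta_{s\cdot e^{-t\sigma}},\eta_\sigma)$. By the preceding lemma the ray $r\mapsto[e^{rs}e^{-t\sigma}]$ is asymptotic to $\eta_{s\cdot e^{-t\sigma}}$. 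I then apply the isometry $R_{e^{t\sigma}}$ (right translation, which preserves the $G$-invariant metric): it sends $[1]\mapsto y_t:=[e^{t\sigma}]$, fixes $\eta_\sigma$ since $[e^{r\sigma}e^{t\sigma}]=[e^{(r+t)\sigma}]$, and carries the ray $[e^{rs}e^{-t\sigma}]$ to $[e^{rs}]$, whence $\eta_{s\cdot e^{-t\sigma}}\mapsto\eta_s$. Invariance of angles under isometries yields $\angle_{[1]}(\eta_{s\cdot e^{-t\sigma}},\eta_\sigma)=\angle_{y_t}(\eta_s,\eta_\sigma)$, and therefore
\[\mu_s(\sigma)=\lim_{t\to+\infty}\cos\angle_{y_t}(\eta_s,\eta_\sigma),\qquad y_t=[e^{t\sigma}].\]

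The second step is to identify this limit with $\cos d_{Tits}$. Since $y_t=[e^{t\sigma}]$ is exactly the geodesic ray from $[1]$ towards $\eta_\sigma$, I would invoke the standard monotonicity property of $\mathrm{CAT}(0)$ boundaries: for boundary points $\xi,\eta$ and any ray $c$ with $c(+\infty)=\xi$, the angle $\angle_{c(t)}(\xi,\eta)$ is non-decreasing in $t$ and converges as $t\to+\infty$ to the Tits angle $\angle_T(\xi,\eta)=\sup_x\angle_x(\xi,\eta)$ (see Bridson--Haefliger, Ch.~II.9). With $\xi=\eta_\sigma$, $\eta=\eta_s$, $c(t)=y_t$ this gives $\angle_{y_t}(\eta_s,\eta_\sigma)\nearrow d_{Tits}(\eta_s,\eta_\sigma)$, and this monotonicity also guarantees that the limit defining $\mu_s(\sigma)$ exists. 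As $\cos$ is continuous and monotone on $[0,\pi]$, the displayed limit equals $\cos d_{Tits}(\eta_s,\eta_\sigma)=\cos d_{Tits}(s,\sigma)$, which is precisely the assertion.

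The routine parts are the tangent-space identification at $[1]$ and tracking which boundary points $R_{e^{t\sigma}}$ fixes and which it moves. The single external input, and the main obstacle were one to seek a self-contained proof, is the $\mathrm{CAT}(0)$ fact that the angle along the ray to $\eta_\sigma$ increases to the supremum $\sup_x\angle_x$ defining $d_{Tits}$; I would either cite this directly or, in the symmetric-space setting, recover it from the explicit action of $e^{-t\sigma}$ through the restricted root decomposition determined by $\sigma$ (after an $H$-conjugation bringing $\sigma$ into a closed Weyl chamber), where $e^{-t\sigma}$ scales each $\mathrm{ad}(\sigma)$-eigenspace by $e^{-t\lambda}$ and the limiting direction of $s\cdot e^{-t\sigma}$ can be read off explicitly, as in the rank-one and flat model computations.
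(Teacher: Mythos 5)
Your argument is correct, but it is not the paper's argument, because the paper does not really give one: its entire ``proof'' of this lemma is the remark that it follows from the preceding calculation together with a deferral to Proposition B.1 of \cite{BiGaRi}. Your proposal supplies an actual proof, and the steps check out. The basepoint identity $\langle s\cdot e^{-t\sigma},\sigma\rangle=\cos\angle_{[1]}\bigl(\eta_{s\cdot e^{-t\sigma}},\eta_\sigma\bigr)$ uses only $T_{[1]}(H\backslash G)\cong\mathfrak{m}$ and the $\mathrm{Ad}(H)$-invariance of the inner product (which is also what keeps $s\cdot e^{-t\sigma}$ of unit length); the right translation $R_{e^{t\sigma}}$ is an isometry of the right-$G$-invariant metric, fixes $\eta_\sigma$, and, precisely by the paper's preceding lemma, carries the class of the ray $r\mapsto[e^{rs}e^{-t\sigma}]$, namely $\eta_{s\cdot e^{-t\sigma}}$, to $\eta_s$; hence $\mu_s(\sigma)=\lim_{t\to\infty}\cos\angle_{[e^{t\sigma}]}(\eta_s,\eta_\sigma)$. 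Since $H\backslash G$ is a Hadamard manifold, the Bridson--Haefliger fact (Ch.\ II.9) that $\angle_{c(t)}(\xi,\eta)$ converges, along a ray $c$ asymptotic to $\xi$, to $\sup_{x}\angle_x(\xi,\eta)$ applies verbatim with $\xi=\eta_\sigma$, $\eta=\eta_s$; and since the paper \emph{defines} $d_{Tits}$ as this supremum of angles, no discrepancy between the angular metric and the length (Tits) metric can arise. One small point: the monotonicity you invoke is a convenience rather than a necessity, since the limit statement alone already gives existence of the limit defining $\mu_s(\sigma)$. As for what each route buys: the paper's citation keeps its appendix short and leans on the Lie-theoretic treatment in \cite{BiGaRi} (which is in the spirit of your fallback sketch via the eigenspace decomposition of $\mathrm{ad}(\sigma)$), whereas your route is self-contained modulo a single standard CAT(0) fact and makes transparent both why the limit exists and why it computes the supremum over basepoints.
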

This lemma follows directly from our previous calculation identifying $[e^{ts}]$ with $[x\cdot e^{tu}]$. The proof can be found in \cite{BiGaRi} Proposition B.1.

Using these definitions, we can describe several examples in further detail; the following example is worked out based on \cite{Rier1} and \cite{Rier2}.
\begin{exmp}[ $G=\mathrm{GL}(n,\mathbb{C}),H=\mathrm{U}(n)$]
In this situation, $\mathfrak{h}$ is the set of anti-Hermitian matrices and $\mathfrak{m}$ is the set of Hermitian matrices. We also claim that the map $\mathfrak{m}\backslash \{0\}\to \partial_{\infty}(H\backslash G)$ sending $s\in \mathfrak{m}$ to the geodesic $\gamma(t)=[e^{ts}]$ defines a bijection. Indeed, we can describe the structure of parabolic group explicitly. For an element $s\in\mathfrak{m}$, $s$ is a Hermitian matrix and thus it can be diagonalized. Moreover, the matrix $s$ has real eigenvalues, say $\lambda_1<\lambda_2<\ldots<\lambda_r$. We denote $V_j=\ker(\lambda_j \mathrm{I}- s)$ to be the eigenspaces associated to each eigenvalue. We let $W_k=V_1\oplus\ldots V_k$ and now the group
\[P_s=\{g\in \mathrm{GL}(n,\mathbb{C})| g(W_k)\subset W_k,\forall k\in [1,n]\}\]
is parabolic, considering $e^{ts} g e^{-ts}$. We can diagonalize $e^{s}$ and the eigenspaces are exactly the $V_j$'s and $e^{ts}$ acts on $V_j$ by multiplying by $e^{\lambda_j t}$. It follows that in order to have a bounded $e^{ts} g e^{-ts} v$ for $v\in V_j$, then it must be $g(v)\subset W_j$.

Now the relative degree can also be calculated by the choice of two antidominant elements. In fact, for two choices of $s,\sigma \in\mathfrak{m}$, we just need to calculate $\mu_s(\sigma)$.

For any $g\in \mathrm{GL}(n,\mathbb{C})$, we first compute $s\cdot g$. Recall that $s\cdot g=\mathrm{Ad}(h^{-1})(s)$ for $g=ph$, where $p\in P_s$ and $h\in H$. Since $s$ can be diagonalized, $s(v)=\lambda_j v$ for $v\in V_j$. We also see that $g^{-1}(W_k)=h^{-1}(W_k)$ by preservation of flags. Then the claim is that \[s\cdot g=\sum_{j=1}^k \lambda_j \pi_{g^{-1}(W_j)\cap g^{-1}(W_{j-1})^\perp}=\sum_{j=1}^k (\lambda_j -\lambda_{j+1})\pi_{g^{-1}W_j},\]
where $\pi_{W_j}$ is the projection of vector onto $W_j$.

Then we can use this claim to compute the relative degree, knowing that for two $s,\sigma$ there are $\lambda_1,\ldots,\lambda_k$ for $s$ and $\mu_1,\ldots,\mu_l$ for $\sigma$, as well as eigenspaces $V_1,\ldots,V_k $ for $s$ and $A_1,\ldots,A_l$ for $\sigma$. Also, we let $W_j=V_1\oplus\ldots\oplus V_j$ and $B_j=A_1\oplus\ldots\oplus A_j$ and then a calculation shows
\[\deg((P_s,s),(P_\sigma,\sigma))=\sum_{i=1}^k \sum_{j=1}^l(\lambda_i-\lambda_{i+1})(\mu_j-\mu_{j+1})\dim(W_i\cap B_j).\]
Here we assumed that $\lambda_{k+1}=0$.
\end{exmp}
We can now define the stability condition for parabolic principal bundles from the previous definition for the degree.

In \cite{BiGaRi} the authors introduce parabolic $G$-Higgs bundles over a punctured Riemann surface $X$ for a non-compact real reductive Lie group $G$ and establish a Hitchin-Kobayashi type correspondence for such pairs. This definition involves a choice for each puncture of an element in the Weyl alcove $\mathcal{A}$ of a maximal compact subgroup $H\subset G$. 

Let $\left( X,D \right)$ be as earlier, a pair of a compact connected Riemann surface $X$ and $D=\left\{ {{x}_{1}},\ldots ,{{x}_{s}} \right\}$ a divisor of $s$-many distinct points on $X$. Let also ${{H}^{\mathbb{C}}}$ be a reductive, complex Lie group. Fix a maximal compact subgroup $H\subset {{H}^{\mathbb{C}}}$, and a maximal torus $T\subset H$ with Lie algebra $\mathfrak{t}$. For an $H^{\mathbb{C}}$-principal holomorphic bundle $E$ over $X$, and for any set $W$ on which $H^{\mathbb{C}}$ acts on the left, we denote by $E(W)$ the twisted product $E\times_{H^{\mathbb{C}}}W$, that is, the product quotient out the equivalence relation $E\times W/\sim$ with the latter defined by 
$(e,hw)\sim (eh,w)$. If $W$ is the representation of $H^{\mathbb{C}}$, we can always associate a vector bundle $E(W)$.

The following argument for the definition of a stability condition is analogous to the non-parabolic case (see \cite{GaGoRi}). For a representation into a Hermitian vector space $\rho:H\to U(B)$, let us denote the holomorphic extension still  by $\rho:H^\mathbb{C}\to \mathrm{GL}(B)$. Given a parabolic subgroup $P_s$ we can define the subspace 
\begin{align}B^-_s=\{v\in B| \rho(e^{t s})v\mbox{ is bounded when }t\to\infty\}\label{bs-}\end{align}
and the invariant subspace under the Levi subalgebra
\begin{align}B^0_s=\{v\in B| \rho(e^{t s})v=v\mbox{ for any t}.\}\label{bs0}\end{align}
We can also define the difference between these two, namely the bounded but not fixed part
\begin{align}B^{<0}_s=\{v\in B^-_s |v\not \in B^0_s\} \label{bsls}\end{align}
\begin{rem}
There is a one-to-one correspondence between an antidominant element $s$ and an \textit{antidominant character} $\chi$ (see \cite{Rier2}). In the sequel, we shall be making use of the subspaces $B_\chi^-$ and $B_\chi^0$ in accordance to \cite{BiGaRi}.
\end{rem}

A \textit{holomorphic reduction} of structure group to a parabolic subgroup $P$ is a section of the principal bundle $E(H^\mathbb{C}/P)$. Since canonically $E(H^\mathbb{C}/P)\cong E/P$, a holomorphic reduction $\sigma:X\to E/P$ together with the quotient map $E\to E/P$ gives a $P$-principle bundle $E_\sigma:=\sigma^*(E)$. This is exactly where the term ``reduction" is coming from. It then follows immediately that $E(B)\cong E_\sigma \times_P B$ and we have the vector bundle $$E_{\sigma,\chi}^-(B):=E_\sigma \times_{P_\chi} B_\chi^-,$$ where $P_\chi$ is the parabolic subgroup associated to the antidominant character $\chi$. 

Following Lemma 2.12 from \cite{GaGoRi} we imply the definition for the degree of a principal bundle associated to a holomorphic reduction $\sigma$ and an antidominant character $\chi$. 

\begin{defn}\label{a09}
For a holomorphic reduction $\sigma\in \Gamma(E(H^\mathbb{C}/P))$ for a parabolic subgroup $P$ and an antidominant character $\chi$ for $P$, we can associate the antidominant element $s_\chi$. Moreover, for a representation $\rho_W:H\to U(W)$ for some Hermitian vector space $W$, then $\rho_W(s_\chi)$ diagonalizes with real eigenvalues $\lambda_1<\lambda_2<\ldots<\lambda_r$. Let $V_j = \ker (\lambda_j \mathrm{Id}_W-\rho_W(s_\chi))$ and $W_j=V_1\oplus\ldots \oplus V_j$, for $j=1, \ldots , r$ and $W=W_r$. Suppose that the associated vector bundles $\mathcal{W}_j=E(W_j)$ and $\mathcal{W}=E(W)$ are all holomorphic, then the degree of $E$ is defined as
\[\deg E(\sigma,\chi)=\lambda_k\deg \mathcal{W}+\sum_{i=1}^{r-1}(\lambda_i-\lambda_{i+1})\deg \mathcal{W}_j.\]
\end{defn}

We now include into our study the parabolic structure at each point. We fix an alcove $\mathsf{\mathcal{A}}\subset \mathfrak{t}$ of $H$ containing $0\in \mathfrak{t}$ and for ${{\alpha }_{i}}\in \sqrt{-1}\mathsf{\bar{\mathcal{A}}}$ where $\bar{\mathcal{A}}$ is the closure of $\mathcal{A}$, and we let ${{P}_{{{\alpha }_{i}}}}\subset {{H}^{\mathbb{C}}}$ be the parabolic subgroup defined by the ${{\alpha }_{i}}$ coming from definition \ref{parabolicdefn}. 

\begin{rem}\label{Hecke}
In fact, we can choose any element in $\mathfrak{t}$ and define the corresponding parabolic structure. But according to Lemma 3.3 in \cite{BiGaRi} one can always choose a shift in the cocharacter lattice $\Lambda_{cochar}$ and get a 1-1 correspondence between the local holomorphic sections. It is therefore more convenient to restrict to the alcoves, instead of the whole maximal toric algebra.
\end{rem}

\begin{defn}\label{208}
A \emph{parabolic structure} of weight ${{\alpha }_{i}}$ on $E$ over a point ${{x}_{i}}$ is defined as the choice of a subgroup ${{Q}_{i}}\subset E{{\left( {{H}^{\mathbb{C}}} \right)}_{{{x}_{i}}}}$ with an antidominant character $\alpha_i$ for $Q_i$.
\end{defn}

\begin{rem}
In order to be compatible with Definition \ref{201}, a small modification for the choice of the $\alpha_i^j$ is necessary here. Indeed, we need a decreasing sequence of weights 
\[\alpha_i^{r}>\alpha_i^{r-1}>\ldots>\alpha_i^{1},\]
so that the increasing filtration is then 
\[E_r\subset E_{r-1}\subset\ldots E_1=E_{x_i}.\]
If this is written as a decreasing filtration, it will recover the original filtration at the point. This now provides
\begin{align*}\deg((P_s,s),(Q_i,\alpha_i))&=\sum_{i=1}^s\sum_{j=1}^r(\lambda_i-\lambda_{i+1})(\alpha_i^{r+1-j}-\alpha_i^{r-j})\dim(W_i\cap E_{r+1-j})
\\ &=\sum_{i=1}^s\sum_{j=1}^r(\lambda_i-\lambda_{i+1})(\alpha_i^{j}-\alpha_i^{j-1})\dim(W_i\cap E_{j}).\end{align*}
Note that we have assumed here that $\alpha_i^0=0$.
\end{rem}

The definition for the parabolic degree of a parabolic principal bundle now combines the last two definitions:
\begin{defn}
Consider a parabolic principal $H^\mathbb{C}$-bundle $E$ together with a holomorphic reduction $\sigma: X\to E/P_\chi$ and an antidominant character $\chi$. We define the \textit{parabolic degree} of $E$ with respect to $\sigma$ and $\chi$ to be the real number
\begin{align}
    \pardeg E(\sigma,\chi):=\deg E(\sigma,\chi)+\sum_{i=1}^s \deg((E_\sigma,\chi),(Q_i,\alpha_i)). 
\end{align}
\end{defn}
 \begin{rem} In \cite{BiGaRi} the authors take the minus sign in the term including the contribution from the weights in the expression of the parabolic degree. We take a plus sign here instead, coming from the increasing sequence of weights that we considered above; this shall ensure that when examining the case when $G=\mathrm{GL}(n,\mathbb{C})$ later on, the definition for the parabolic degree will coincide with Definition \ref{203}.
 \end{rem}
 
For a real reductive Lie group $G$ with a maximal compact subgroup $H$, let $\mathfrak{g}=\mathfrak{h}\oplus \mathfrak{m}$ be the Cartan decomposition of the Lie algebra into its $\pm 1$-eigenspaces, where $\mathfrak{h}=\mathrm{Lie}\left( H \right)$ and let $E\left( {{\mathfrak{m}}^{\mathbb{C}}} \right)$ be the bundle associated to $E$ via the isotropy representation. Choose a trivialization $e\in E$ near the point ${{x}_{i}}$, such that near ${{x}_{i}}$ the parabolic weight lies in ${{\alpha }_{i}}\in \sqrt{-1}\mathsf{\bar{\mathcal{A}}}$. In the trivialization $e$, we can decompose the bundle $E\left( {{\mathfrak{m}}^{\mathbb{C}}} \right)$ under the eigenvalues of $\mathrm{ad}\left( {{\alpha }_{i}} \right)$ acting on ${{\mathfrak{m}}^{\mathbb{C}}}$ as
	\[E\left( {{\mathfrak{m}}^{\mathbb{C}}} \right)=\underset{\mu }{\mathop{\oplus }}\,\mathfrak{m}_{\mu }^{\mathbb{C}}.\]

In particular, take ${{\alpha }_{i}}\in \sqrt{-1}{{\mathsf{{\mathcal{A}}'}}_{\mathfrak{g}}}$, where ${{\mathsf{{\mathcal{A}}'}}_{\mathfrak{g}}}$ is the space of $\alpha \in \mathsf{\bar{\mathcal{A}}}$ such that the eigenvalues of $\mathrm{ad}(\alpha) $ have modulus smaller than 1 on the entire $\mathfrak{g}$, and consider for $\alpha \in \sqrt{-1}\mathfrak{h}$ the subspaces of ${{\mathfrak{m}}^{\mathbb{C}}}$ similar to definition of $B_\alpha^-$ and $B_\alpha^0$. We can define $\mathfrak{m}_i:=(\mathfrak{m}^{\mathbb{C}})_{\alpha_i}^-$ as well as $\mathfrak{m}_i^0=(\mathfrak{m}^{\mathbb{C}})_{\alpha_i}^0$ according to equation \ref{bs-} and \ref{bs0}.
The decomposition implies that $\mathfrak{m}_i=\mathfrak{m}_i^0\oplus \mathfrak{n}_i$ for some $\mathfrak{n}_i$.
 We define the sheaf $PE\left( {{\mathfrak{m}}^{\mathbb{C}}} \right)$ of \emph{parabolic sections of $E\left( {{\mathfrak{m}}^{\mathbb{C}}} \right)$} as the sheaf of local holomorphic sections $\psi $ of $E\left( {{\mathfrak{m}}^{\mathbb{C}}} \right)$ such that $\psi \left( {{x}_{i}} \right)\in {{\mathfrak{m}}_{i}}$. Similarly, the sheaf $NE\left( {{\mathfrak{m}}^{\mathbb{C}}} \right)$ of \emph{strongly parabolic sections of $E\left( {{\mathfrak{m}}^{\mathbb{C}}} \right)$} is defined as the sheaf of local holomorphic sections $\psi $ of $E\left( {{\mathfrak{m}}^{\mathbb{C}}} \right)$ such that $\psi \left( {{x}_{i}} \right)\in {{\mathfrak{n}}_{i}}$. The following short exact sequences of sheaves are then realized

\begin{equation}
0\to PE\left( {{\mathfrak{m}}^{\mathbb{C}}} \right)\to E\left( {{\mathfrak{m}}^{\mathbb{C}}} \right)\to \underset{i}{\mathop{\bigoplus }}\,E{{\left( {{\mathfrak{m}}^{\mathbb{C}}} \right)}_{{{x}_{i}}}}/{{\mathfrak{m}}_{i}}\to 0,
\label{PEses}
\end{equation}
\begin{equation}
0\to NE\left( {{\mathfrak{m}}^{\mathbb{C}}} \right)\to E\left( {{\mathfrak{m}}^{\mathbb{C}}} \right)\to \underset{i}{\mathop{\bigoplus }}\,E{{\left( {{\mathfrak{m}}^{\mathbb{C}}} \right)}_{{{x}_{i}}}}/{{\mathfrak{n}}_{i}}\to 0.
\label{NEses}
\end{equation}

\begin{defn}
Let $\left( X,D \right)$ be a pair of a compact connected Riemann surface $X$ and $D=\left\{ {{x}_{1}},\ldots ,{{x}_{s}} \right\}$ a divisor of $s$-many distinct points on $X$.
A \textit{parabolic G-Higgs bundle} over $(X, D)$ is a pair $(E,\Phi)$ such that $E$ is an $H^\mathbb{C}$-principal bundle over $X$ and $\Phi$ is a holomorphic section of $PE(\mathfrak{m}^\mathbb{C})\otimes K(D)$. A \textit{strongly parabolic G-Higgs bundle} $(E,\Phi)$ over $(X, D)$ involves $\Phi\in \Gamma(NE(\mathfrak{m}^\mathbb{C})\otimes K(D))$.
\end{defn}

We have the following three notions of stability. Note that the parameter $\alpha$ in the following definition should not be confused with the parabolic structure $\alpha$ in previous settings (Definition \ref{201}).
\begin{defn}[Stability conditions]Let $(E,\Phi)$ be a parabolic $G$-Higgs bundle, and let $\alpha\in i\mathfrak{h}\cap\mathfrak{z}$, where $\mathfrak{z}$ is the center of $\mathfrak{h}^\mathbb{C}$. Also, let $\langle\cdot,\cdot\rangle$ be a nondegenerate bilinear pairing on $\mathfrak{h}^{\mathbb{C}}$.

\begin{enumerate}
    \item We say the parabolic $G$-Higgs bundle $(E,\Phi)$ is \textbf{$\alpha$-semistable}, if for any subgroup $P\subset H^\mathbb{C}$, any anti-dominant character $\chi$ of $P$ and any holomorphic reduction $\sigma\in \Gamma(H^\mathbb{C}/P)$ such that $\Phi\in H^0(E(\mathfrak{m}^{\mathbb{C}})_{\sigma,\chi}^-\otimes K(D))$, one has
    \[\pardeg E(\sigma,\chi)-\langle \alpha,\chi\rangle\ge 0.\]
    \item We say the parabolic $G$-Higgs bundle is \textbf{$\alpha$-stable}, if the above inequality is strict.
    \item We say the parabolic $G$-Higgs bundle is \textbf{$\alpha$-polystable}, if it is semistable and if for $\sigma,\chi$ as earlier such that $\Phi\in H^0(E(\mathfrak{m}^{\mathbb{C}})_{\sigma,\chi}^-\otimes K(D))$, it is
    \[\pardeg E(\sigma,\chi)-\langle \alpha,\chi\rangle=0,\]
    then there is a holomorphic reduction $\sigma_{L_s}\in \Gamma(E_\sigma(P/L_s))$ where $L_s$ is the Levi subgroup such that $\Phi\in H^0(E(\mathfrak{m}^\mathbb{C})_{\sigma_{L_s},\chi}^0\otimes K(D))\subset H^0(E(\mathfrak{m}^\mathbb{C})_{\sigma,\chi}^-\otimes K(D))$.
    \item If the parameter $\alpha=0$ above, we just call the corresponding Higgs bundle \textbf{semistable}, \textbf{stable} and \textbf{polystable} respectively.
    \end{enumerate}
\end{defn}
This abstract definition can be unraveled considering particular examples, and indeed it turns out that it coincides with Definition \ref{203} in the case $G=\mathrm{GL}(n,\mathbb{C})$. We describe this in detail in the following:

\begin{exmp}[ $G=\mathrm{GL}(n,\mathbb{C}),H=\mathrm{U}(n)$]\label{Gln}
 For a vector space $\mathbb{C}^n$ and a representation $H^{\mathbb{C}}$ acting on $V$, there is an identification of a parabolic $G=\mathrm{GL}(n,\mathbb{C})$-bundle with a parabolic vector bundle $W$. Thus $E(\mathfrak{m}^{\mathbb{C}})$ is in fact $\mathrm{End}(W)$ and $\Phi\in H^0(\mathrm{End}(W)\otimes K(D))$. Moreover, one has to check that $\mathrm{Res}_{x_i}\Phi \in \mathfrak{m}_i:=\mathfrak{m}_{\alpha}^-$.
 
 We already know that the parabolic structure $(Q_i,\alpha_i)$ involves a choice of real eigenvalues from $0$ to $1$ from Example \ref{Gln} and Remark \ref{Hecke}. Thus this corresponds to a choice of real numbers $-\frac{1}{2}\le \alpha_i^1\le\alpha_i^2\le\ldots\le \alpha_i^n\le \frac{1}{2}$ and thus the subspaces $A_j=\ker(\alpha_i^j \mathrm{Id}-\alpha_i)$ and also $B_{j}=A_n\oplus A_{n-1} \oplus\ldots\oplus A_j$. Moreover, 
 \[\Gamma(PE(\mathfrak{m}^{\mathbb{C}})\otimes K(D))=\{\Phi\in \mathrm{Hom}(E,E\otimes K(D))|\Phi(B_j)\subset B_j\otimes K(D)\},\]
 which corresponds to the classical definition of a parabolic Higgs bundle.
 
We next check the $\alpha$-stability condition. For the group $G=\mathrm{GL}(n,\mathbb{C})$, its center is the set of diagonal matrices. Therefore, a choice of $\alpha$ in fact involves a choice of a real number. Indeed, since $\chi$ can be diagonalized with real eigenvalues $\lambda_1<\ldots<\lambda_r$ with eigenspaces of dimension $\dim V_1,\ldots \dim V_r$, then
$\langle \alpha,\chi\rangle=\alpha \sum_{i=1}^r \lambda_i \dim V_i$.
 
For every classical group $G$ and any $G$-principal bundle $P$, one can associate a vector bundle $W$ by considering the fundamental representation $\mathcal{W}=P\times_{G}W$ where $W$ is the fundamental representation of $G$ and the Higgs field will be a section of the vector bundle $P\times_{Ad}\mathfrak{m}^{\mathbb{C}}$. If $G=\mathrm{GL}(n,\mathbb{C})$, the associated vector bundle will be the vector bundle $\mathcal{W}$ and $\Phi$ will be section of $\mathrm{End}(\mathcal{W})\otimes K(D)$, thus we have the following claim:
 \begin{lem}
 With the same notation from Definition \ref{a09}, a parabolic $GL(n,\mathbb{C})$-Higgs bundle $(E,\Phi)$ is $\mu$-semistable/stable/polystable if and only if the associated parabolic Higgs bundle $(\mathcal{W},\Phi_{\mathcal{W}})$ is semistable/stable/polystable. Here $\mu:=\mu(\mathcal{W})=\frac{\pardeg \mathcal{W}}{n}$.
 \end{lem}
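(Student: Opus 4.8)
The plan is to unwind the abstract $\mu$-(semi/poly)sta\-bility condition for $G=\mathrm{GL}(n,\mathbb{C})$ into a statement about $\Phi$-invariant parabolic subbundles of $\mathcal{W}$ and match it term-by-term with Definition \ref{207}. First I would translate the abstract data into filtrations. As recorded in Example \ref{Gln}, a parabolic subgroup $P\subset\mathrm{GL}(n,\mathbb{C})$ is the stabilizer of a flag $W_1\subset\cdots\subset W_r=\mathbb{C}^n$, while an antidominant character $\chi$ amounts to a choice of real eigenvalues $\lambda_1<\cdots<\lambda_r$ of the associated element $s_\chi\in\mathfrak{m}$. A holomorphic reduction $\sigma\in\Gamma(E(\mathrm{GL}(n,\mathbb{C})/P))$ is then the same datum as a filtration of $\mathcal{W}=E(\mathbb{C}^n)$ by holomorphic subbundles $\mathcal{W}_1\subset\cdots\subset\mathcal{W}_r=\mathcal{W}$, with $\mathcal{W}_j:=E(W_j)$. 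Since $E(\mathfrak{m}^{\mathbb{C}})=\mathrm{End}(\mathcal{W})$, the subsheaf $E(\mathfrak{m}^{\mathbb{C}})^-_{\sigma,\chi}$ coming from \ref{bs-} is exactly the sheaf of filtration-preserving endomorphisms, so the hypothesis $\Phi\in H^0(E(\mathfrak{m}^{\mathbb{C}})^-_{\sigma,\chi}\otimes K(D))$ is equivalent to $\Phi(\mathcal{W}_j)\subset\mathcal{W}_j\otimes K(D)$ for all $j$, i.e. to $\Phi$-invariance of the filtration.

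Next I would evaluate the two terms in the stability inequality. Using Definition \ref{a09} for the standard representation together with the relative-degree computation recorded in the remark following Definition \ref{208}, an Abel summation collapses the ordinary-degree part and the per-point weight contributions into parabolic degrees, yielding
\begin{align*}
\pardeg E(\sigma,\chi)=\sum_{j=1}^r \lambda_j\,\pardeg\bigl(\mathcal{W}_j/\mathcal{W}_{j-1}\bigr),
\end{align*}
with the convention $\mathcal{W}_0=0$. Taking $\alpha=\mu\cdot\mathrm{Id}$ in the centre of $\mathfrak{gl}_n$, the pairing computed in Example \ref{Gln} reads $\langle\alpha,\chi\rangle=\mu\sum_{j=1}^r\lambda_j\,\mathrm{rk}(\mathcal{W}_j/\mathcal{W}_{j-1})$. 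Writing $d_j:=\pardeg\mathcal{W}_j-\mu\,\mathrm{rk}\,\mathcal{W}_j$, so that $d_0=d_r=0$ precisely because $\mu=\pardeg\mathcal{W}/n$, the inequality $\pardeg E(\sigma,\chi)-\langle\alpha,\chi\rangle\ge 0$ becomes, after a second Abel summation, $-\sum_{j=1}^{r-1}(\lambda_{j+1}-\lambda_j)\,d_j\ge 0$.

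Finally I would exploit positivity. Since the gaps $\lambda_{j+1}-\lambda_j$ range over arbitrary positive reals independently, the last inequality holds for every $\Phi$-invariant weighted filtration if and only if $d_j\le 0$ for each term, i.e. if and only if $\pardeg\mathcal{W}_j\le\mu\,\mathrm{rk}\,\mathcal{W}_j$ for every $\Phi$-invariant subbundle $\mathcal{W}_j$ (each such subbundle being realized by the two-step filtration $\mathcal{W}_j\subset\mathcal{W}$). This is exactly $\mathrm{par}\mu(F)\le\mathrm{par}\mu(\mathcal{W})$ for all $\Phi$-invariant parabolic subbundles $F\le\mathcal{W}$, which is semistability in the sense of Definition \ref{207}; replacing the inequalities by strict ones for proper nontrivial subbundles gives the stable case. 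For polystability I would run the equality case of the definition: when $\pardeg E(\sigma,\chi)-\langle\alpha,\chi\rangle=0$, the further reduction to the Levi subgroup $L_s$ corresponds to a $\Phi$-invariant splitting $\mathcal{W}\cong\bigoplus_j \mathcal{W}_j/\mathcal{W}_{j-1}$ into parabolic subbundles of common slope $\mu$, and iterating expresses $(\mathcal{W},\Phi_{\mathcal{W}})$ as a direct sum of stable parabolic Higgs bundles of slope $\mu$.

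The main obstacle I expect is the careful bookkeeping in the degree computation of the second paragraph: one must verify that the relative-degree terms $\deg((E_\sigma,\chi),(Q_i,\alpha_i))$ assemble, under the sign convention adopted for $\pardeg$ in this article (the plus sign matched to the increasing-weight convention), into exactly the \emph{parabolic} degrees of the $\mathcal{W}_j$ rather than their ordinary degrees, so that the final inequality reproduces Definition \ref{203} on the nose. The Levi-reduction step underlying the polystable equivalence, though standard, also requires some care to match the two notions of direct-sum decomposition.
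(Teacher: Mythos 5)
Your proposal is correct and follows essentially the same route as the paper's proof: both unwind the abstract condition by identifying reductions with $\Phi$-invariant filtrations of $\mathcal{W}$, collapse $\pardeg E(\sigma,\chi)-\langle\alpha,\chi\rangle$ via Abel summation into the terms $(\lambda_k-\lambda_{k+1})(\pardeg\mathcal{W}_k-\mu\,\mathrm{rk}\,\mathcal{W}_k)$, extract the converse direction from two-step filtrations $0\subset F\subset\mathcal{W}$, and treat polystability through the Levi reduction giving a $\Phi$-invariant direct-sum decomposition. The only cosmetic difference is that you package the two implications into a single equivalence using the independence of the eigenvalue gaps, where the paper argues the two directions separately.
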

 \begin{proof}
 ($\Leftarrow$) Let $P\subset H^\mathbb{C}$ be a parabolic subgroup, $\chi$ an  antidominant character of $P$ and  $\sigma\in \Gamma(H^\mathbb{C}/P)$ a holomorphic reduction such that $\Phi\in H^0(E(\mathfrak{m}^{\mathbb{C}})_{\sigma,\chi}^-\otimes K(D))$.
 
 We may consider the case where there is only one point in the divisor $D$ and then generalize the argument. Consider the antidominant character $\chi$ of $P$ and the representation $\rho_W:H\to U(W)$ with the real eigenvalues $\lambda_1<\ldots<\lambda_r$. Then by Definition \ref{a09}, if we set $V_i=\ker(\lambda_j\mathrm{Id}_W-\rho_W(s_\chi))$, $W_i=V_1\oplus\ldots\oplus V_r$ and $W=W_r$, one can calculate the parabolic degree as follows
 \begin{align*}
     & \pardeg E(\sigma,\chi)-\langle\alpha,\chi\rangle=
     \\& =\sum_{k=1}^{r}(\lambda_k-\lambda_{k+1})(\deg \mathcal{W}_k-\mu\mathrm{rk}(\mathcal{W}_k)) + \sum_{k,j}(\lambda_k-\lambda_{k+1}) (\alpha_i^j-\alpha_i^{j-1})\dim(W_k\cap B_{j})
     \\&=\sum_{k=1}^{r}(\lambda_k-\lambda_{k+1}) \left(\deg \mathcal{W}_k-\mu\mathrm{rk}(\mathcal{W}_k)+\sum_{j=1}^{n}(\alpha_i^{j}-\alpha_i^{j-1})\dim(W_k\cap B_{j})\right)
     \\&=\lambda_r (\pardeg \mathcal{W}-\mu\mathrm{rk}(\mathcal{W}))+\sum_{k=1}^{r-1} (\lambda_k-\lambda_{k+1}) (\pardeg \mathcal{W}_k-\mu\mathrm{rk}(\mathcal{W}_k)) \ge 0.
 \end{align*}
 The last inequality follows since the Higgs bundle preserves the flag, $\Phi(\mathcal{W}_j)\subset \Phi(\mathcal{W}_j)\otimes K(D)$, thus the slope inequality for stable/semistable parabolic Higgs bundle yields $\frac{\pardeg \mathcal{W}_k}{\mathrm{rank}(\mathcal{W}_k)}\le \mu(\mathcal{W})= \frac{\pardeg \mathcal{W}}{n}$, so $\pardeg \mathcal{W}_k\le \mu \mathrm{rk}(\mathcal{W}_k)$ if we rearrange the terms in the equation. Moreover, $\lambda_k-\lambda_{k+1}$ is less than $0$ by definition. 

Note that $\Phi\in H^0(E(\mathfrak{m}^\mathbb{C})^-_{\sigma,\chi}\otimes K(D))$ actually says that $\Phi(\mathcal{W}_k)\subset \mathcal{W}_k\otimes K(D)$. Together with the assumption about parabolic structure from $\Phi(B_j)\subset B_j\otimes K(D)$ we derive a strong restriction on the possible $W$'s that are allowed.
 
Now we have to show that the polystability conditions are corresponding to each other. But this just comes from the fact that our Higgs bundle $\mathcal{W}=V_1\oplus V_2$ and $\Phi\in H^0(\mathrm{Hom}(V_1,V_1\otimes K(D))\oplus\mathrm{Hom}(V_2,V_2\otimes K(D)) )$, thus for any parabolic subgroup that preserves the flag of $\mathcal{W}$ we can reduce it to the subgroup $\mathrm{GL}(\dim V_1,\mathbb{C})\times\mathrm{GL}(\dim V_2,\mathbb{C}) $, the Levi subgroup of parabolic reduction will be straightforward as well.
 
($\Rightarrow$) Let $(E(W),\Phi_W)$ be a $\mu$-semistable parabolic $G$-Higgs bundle. For any subbundle $V$ of $W$, one can associate the filtration $0\subset V\subset W$ and the associated parabolic subgroup that preserves this flag. For any antidominant character $\chi$ of the associated parabolic and any holomorphic reduction $\sigma$ one has that 
 \begin{align}
     \pardeg E(\sigma,\chi)-\langle\mu,\chi\rangle=(\lambda_1-\lambda_2)(\pardeg V-\mu(\mathcal{W})\mathrm{rk}(V))\ge 0,
 \end{align}
 which provides that $\frac{\pardeg V}{\mathrm{rk}(V)}\le \mu(\mathcal{W})$. The polystability condition is checked similarly.
 \end{proof}
\end{exmp}

We now come to our key example $G=\mathrm{Sp}(2n,\mathbb{R})$, for which we shall examine in detail what it means to be semistable, stable or polystable. The treatment is similar to Section 4.3 of \cite{GaGoRi}.
\begin{exmp}[$G=\mathrm{Sp}(2n,\mathbb{R}),H=U(n)$]
For the group $G = \mathrm{Sp}(2n,\mathbb{R})$, it is $H^\mathbb{C}=\mathrm{GL}(n,\mathbb{C})$, and for the fundamental representation $\mathbb{V}$ of $\mathrm{GL}(n,\mathbb{C})$ one has the isotopy representation
\[\mathfrak{m}^\mathbb{C}=\mathrm{Sym}^2\mathbb{V}\oplus \mathrm{Sym}^2\mathbb{V}^*.\]
Following Section 4.3 in \cite{GaGoRi} and replacing the degree by the parabolic degree, one has the following characterization of semistability, stability and polystability:

The parabolic $\mathrm{Sp}(2n,\mathbb{R})$-Higgs bundle consists of a parabolic vector bundle $V=E(\mathbb{V})$ and a section
\[\Phi=(\beta,\gamma)\in H^0(K(D)\otimes \mathrm{Sym}^2V\oplus K(D)\otimes \mathrm{Sym}^2V^*).\]
 
The stability condition turns out to be similar while we have a different Higgs field. For the parabolic vector bundle $V$ we define the \textit{filtration of $V$ of length $k-1$} to be a strictly increasing filtration of holomorphic subbundles
\[\mathcal{V}=(0=V_0\subsetneq V_1\subsetneq V_2\subsetneq\ldots \subsetneq V_{k-1}\subsetneq V_k=V).\]
Let $\lambda=(\lambda_1<\lambda_2<\ldots<\lambda_k)$ be a strictly increasing sequence of $k$-real numbers, and we define
\[\mathcal{N}_\beta(\mathcal{V},\lambda)=\sum_{\substack{\lambda_i+\lambda_j\le 0 \\ 1\le i,j\le k}}K(D)\otimes (V_i\otimes_S V_j)\mbox{ and }\mathcal{N}_\gamma(\mathcal{V},\lambda)=\sum_{\substack{\lambda_i+\lambda_j\ge 0 \\ 1\le i,j\le k}}K(D)\otimes (V_{i-1}^\perp\otimes_S V_{j-1}^\perp).\]
Here, for $W_1,W_2$ subbundles of $W$, then $W_1\otimes_S W_2$ is the image of the map $W_1\otimes W_2\subset W\otimes W\to \mathrm{Sym}^2 W$. Also, $W_1^\perp$ is the kernel of restriction map $W^*\to W_1^*$. We also set
\[\mathcal{N}(\mathcal{V},\lambda):=\mathcal{N}_\beta(\mathcal{V},\lambda)\oplus \mathcal{N}_\gamma(\mathcal{V},\lambda)\]
and define
\[pd(\mathcal{V},\lambda,\alpha):=\sum_{j=1}^k(\lambda_j-\lambda_{j+1})(\pardeg V_j-\alpha \mathrm{rk}(V_j)).\]

Now we can define the subspaces associated to the parabolic structure and determine $PE(\mathfrak{m}^\mathbb{C})$. For any parabolic structure at $x_i$ we have a filtration $0=U_0\subsetneq U_1 \subsetneq\ldots \subsetneq U_{r-1}\subsetneq U_r=V_{x_i}$ with associated weights $\alpha_1<\alpha_2<\ldots<\alpha_r$. Define
\[U_\beta(V_{x_i},\alpha):=\sum_{\substack{\alpha_i+\alpha_j\le 0 \\ 1\le i,j\le k}}K(D)\otimes (U_i\otimes_S U_j)\mbox{ and }U_\gamma(V_{x_i},\alpha)=\sum_{\substack{\alpha_i+\alpha_j\ge 0 \\ 1\le i,j\le k}}K(D)\otimes (U_{i-1}^\perp\otimes_S U_{j-1}^\perp),\]
and analogously $U(V_{x_i},\alpha)=U_\beta(V_{x_i},\alpha)\oplus U_\gamma(V_{x_i},\alpha)$. Then the condition $\Phi\in H^0(PE(\mathfrak{m}^\mathbb{C})\otimes K(D))$ is nothing but $\Phi_{x_i}\in U(V_{x_i},\alpha)$.

Thus we have the following characterization of the stability condition by the same argument in \cite{GaGoRi}:
\begin{lem} A parabolic $\mathrm{Sp}(2n,\mathbb{R})$-Higgs bundle $(E,\Phi)$ over $(X, D)$ with parabolic structure $\alpha_i$ at each point $x_i\in D$ is a vector bundle $V$ together with $\Phi=(\beta,\gamma)\in H^0(K(D)\otimes \mathrm{Sym}^2V\oplus K(D)\otimes \mathrm{Sym}^2V^*)$ such that $\Phi|_{x_i}\in U(V_{x_i},\alpha_i)$. Moreover the Higgs bundle is $\alpha$-semistable if for a filtration of $V$ and any strictly increasing sequence $\lambda$ we have $\Phi\in H^0(\mathcal{N}(\mathcal{V},\lambda))$, then $pd(\mathcal{V},\lambda,\alpha)\ge 0$. When the inequality is strict, we say the Higgs bundle is stable.

The pair is $\alpha$-polystable if for a nontrivial $\mathcal{V}$ and $\lambda$ such that $\Phi\in H^0(\mathcal{V},\lambda)$ and such that $pd(\mathcal{V},\lambda,\alpha)=0$ there is an isomorphism of vector bundles
\[\sigma:V\to V_1\oplus V_2/V_1\oplus\ldots \oplus V_k/V_{k-1},\]
such that $V_j=\sigma^{-1}(V_1\oplus V_2/V_1\oplus\ldots \oplus V_j/V_{j-1})$ and such that
\[\beta \in H^0\left(\sum_{\lambda_i+\lambda_j}K(D)\otimes \sigma^{-1}(V_i/V_{i-1})\otimes_S \sigma^{-1}(V_j/V_{j-1})\right)\]
and 
\[\gamma \in H^0\left(\sum_{\lambda_i+\lambda_j}K(D)\otimes \sigma^{*}((V_i/V_{i-1})^*)\otimes_S \sigma^{*}((V_j/V_{j-1})^*)\right).\]
\end{lem}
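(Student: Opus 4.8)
The plan is to specialize the abstract $\alpha$-(semi/poly)stability conditions stated above to $H^{\mathbb{C}} = \mathrm{GL}(n,\mathbb{C})$ with isotropy representation $\mathfrak{m}^{\mathbb{C}} = \mathrm{Sym}^2\mathbb{V} \oplus \mathrm{Sym}^2\mathbb{V}^*$, following the dictionary already set up in the $G = \mathrm{GL}(n,\mathbb{C})$ example above but now carrying the extra symmetric-square structure. The whole argument is the parabolic analogue of the one in Section~4.3 of \cite{GaGoRi}, with the ordinary degree systematically replaced by the parabolic degree, so at each step I would cite that computation and only verify the modifications forced by the marked points.

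First I would recall that parabolic subgroups $P \subset \mathrm{GL}(n,\mathbb{C})$ are exactly flag stabilizers, so a holomorphic reduction $\sigma \in \Gamma(E(H^{\mathbb{C}}/P))$ is the same datum as a filtration $\mathcal{V} = (0 = V_0 \subsetneq V_1 \subsetneq \cdots \subsetneq V_k = V)$ of the associated parabolic vector bundle $V = E(\mathbb{V})$, while an antidominant character $\chi$ corresponds to a strictly increasing sequence $\lambda = (\lambda_1 < \cdots < \lambda_k)$, with $V_j$ the sum of the $\mathrm{ad}(s_\chi)$-eigenbundles of eigenvalue at most $\lambda_j$. The heart of the translation is the weight decomposition of $\mathfrak{m}^{\mathbb{C}}$ under $\mathrm{ad}(s_\chi)$: on $\mathrm{Sym}^2 V$ the product $V_i \otimes_S V_j$ sits in weights at most $\lambda_i + \lambda_j$, so the bounded (non-positive-weight) subspace is $\sum_{\lambda_i + \lambda_j \le 0} V_i \otimes_S V_j$; dually on $\mathrm{Sym}^2 V^*$ the annihilators $V_{i-1}^\perp$ carry reversed weights, whence $V_{i-1}^\perp \otimes_S V_{j-1}^\perp$ lies in weights at most $-(\lambda_i+\lambda_j)$ and non-positivity produces the reversed condition $\lambda_i + \lambda_j \ge 0$. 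This identifies $E(\mathfrak{m}^{\mathbb{C}})^-_{\sigma,\chi} \otimes K(D)$ with $\mathcal{N}(\mathcal{V},\lambda) = \mathcal{N}_\beta(\mathcal{V},\lambda) \oplus \mathcal{N}_\gamma(\mathcal{V},\lambda)$, so the admissibility requirement $\Phi \in H^0(E(\mathfrak{m}^{\mathbb{C}})^-_{\sigma,\chi} \otimes K(D))$ becomes $\Phi \in H^0(\mathcal{N}(\mathcal{V},\lambda))$. The identical eigenvalue bookkeeping applied pointwise at each $x_i$, now with the weights $\alpha_i$ of the parabolic structure in place of $\lambda$, yields the local constraint $\Phi|_{x_i} \in U(V_{x_i},\alpha_i)$, i.e. the assertion that $\Phi \in H^0(PE(\mathfrak{m}^{\mathbb{C}}) \otimes K(D))$.

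Next I would expand $\pardeg E(\sigma,\chi) - \langle \alpha, \chi \rangle$ using Definition~\ref{a09} together with the weight contribution at the marked points. Exactly as in the $\mathrm{GL}(n,\mathbb{C})$ computation, telescoping the sum over the successive subbundles $\mathcal{W}_k = E(W_k)$ collapses the expression into $\sum_{j=1}^k (\lambda_j - \lambda_{j+1})(\pardeg V_j - \alpha\,\mathrm{rk}(V_j)) = pd(\mathcal{V},\lambda,\alpha)$, the term $\langle \alpha, \chi \rangle$ accounting for the $\alpha\,\mathrm{rk}(V_j)$ shift since the center of $\mathfrak{gl}(n,\mathbb{C})$ is one-dimensional. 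Thus the requirement $\pardeg E(\sigma,\chi) - \langle \alpha, \chi \rangle \ge 0$ (resp. $>0$) over all admissible $(\sigma,\chi)$ becomes the condition that $pd(\mathcal{V},\lambda,\alpha) \ge 0$ (resp. $>0$) for every filtration $\mathcal{V}$ and sequence $\lambda$ with $\Phi \in H^0(\mathcal{N}(\mathcal{V},\lambda))$, which is the stated semistability/stability criterion.

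Finally, for polystability I would unravel the existence of the further reduction $\sigma_{L_s}$ to the Levi subgroup $L_s \subset P$: for $\mathrm{GL}(n,\mathbb{C})$ the Levi is a product of general linear groups, so such a reduction is precisely a splitting $\sigma \colon V \xrightarrow{\sim} \bigoplus_j V_j/V_{j-1}$ compatible with the filtration, and the requirement $\Phi \in H^0(E(\mathfrak{m}^{\mathbb{C}})^0_{\sigma_{L_s},\chi} \otimes K(D))$ forces $\beta$ and $\gamma$ into the weight-zero (block-diagonal) parts, giving the two displayed decompositions. The main obstacle I anticipate is the sign-and-annihilator bookkeeping for $\mathrm{Sym}^2 V \oplus \mathrm{Sym}^2 V^*$: one must check that the bounded subspace of $\mathrm{Sym}^2 V^*$ is genuinely described by $V_{i-1}^\perp \otimes_S V_{j-1}^\perp$ with the index shift $i-1,\,j-1$ and the reversed inequality $\lambda_i + \lambda_j \ge 0$, and that these conventions remain consistent between the global test (with $\lambda$) and the pointwise parabolic condition (with the $\alpha_i$).
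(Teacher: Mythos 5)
Your proposal is correct and takes essentially the same route as the paper: the paper proves this lemma simply by invoking the argument of Section 4.3 of \cite{GaGoRi} with the ordinary degree replaced by the parabolic degree and the pointwise conditions at the marked points added, which is precisely the specialization you carry out. Your weight bookkeeping on $\mathrm{Sym}^2 V \oplus \mathrm{Sym}^2 V^*$ (including the annihilator index shift and the reversed inequality $\lambda_i+\lambda_j\ge 0$), the telescoping of $\pardeg E(\sigma,\chi)-\langle\alpha,\chi\rangle$ into $pd(\mathcal{V},\lambda,\alpha)$, and the reading of polystability as a Levi reduction forcing $\beta,\gamma$ into the weight-zero blocks are exactly the details that the paper's citation compresses.
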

Since $\mathrm{Sym}(V^*)\subset V^*\otimes V^*$ by sending $v\otimes_S w\to \frac{1}{2}(v\otimes w+ w\otimes v)$, we can write the $\mathrm{Sp}(2n,\mathbb{R})$-bundle as an $\mathrm{SL}(2n,\mathbb{C})$-bundle. Thus, we may write $E=V\oplus {{V}^{\vee }}$ and $\Phi =\left( \begin{matrix}
   0 & \beta   \\
   \gamma  & 0  \\
\end{matrix} \right):E\to E\otimes K\left( D \right)$.  Equipped with the stability condition and since $\beta,\gamma$ are fixed under the change of basis $a\otimes b\mapsto b\otimes a$ in $V\otimes V$ and $V^*\otimes V^*$, we exactly revoke our Definition \ref{502} of a parabolic $\mathrm{Sp}(2n,\mathbb{R})$-Higgs bundle.
\end{exmp}

The proof for the other examples of Section 8 is entirely analogous.

\vspace{2mm}
\textbf{Acknowledgements}.
The authors are happy to express their warmest acknowledgements to Indranil Biswas, Steven Bradlow and Ningchuan Zhang for useful discussions and shared insights. We are also very grateful to Philip Boalch, Roberto Rubio, Laura Schaposnik and anonymous referees for making useful comments, once a first version of this article became available, as well as to the Mathematics Department of the University of Illinois at Urbana-Champaign, where this work was initiated, for providing a productive working environment.
\vspace{2mm}

\bigskip

\noindent\small{\textsc{Institut de Recherche Math\'{e}matique Avanc\'{e}e, Universit\'{e} de Strasbourg}\\
7 rue Ren\'{e}-Descartes, 67084 Strasbourg Cedex, France}\\
\emph{E-mail address}:  \texttt{kydonakis@math.unistra.fr}

\bigskip
\noindent\small{\textsc{Department of Mathematics, Sun Yat-Sen University}\\
135 Xingang W Rd, BinJiang Lu, Haizhu Qu, Guangzhou Shi, Guangdong Sheng, China}\\
\emph{E-mail address}:  \texttt{sunh66@mail.sysu.edu.cn}

\bigskip
\noindent\small{\textsc{Department of Mathematics, University of Illinois at Urbana-Champaign}\\
1409 W. Green St, Urbana, IL 61801, USA}\\
\emph{E-mail address}: \texttt{lzhao35@illinois.edu}

\end{document}